\date{\today}
\def\lagside{\ell_\mathrm{lag}}
\def\morside{\ell_\mathrm{mor}}
\def\R{{\mathbb R}}
\def\Pmat{\mathbf{P}}
\def\Amat{\mathbf{A}}
\def\Bmat{\mathbf{B}}
\def\TransferMat{\mathbf{T}}
\def\DiagMat{\mathbf{D}}
\def\sysmat{\mathbf{C}}
\def\supp{\mathrm{supp}}
\def\xx{\mathbf{x}}
\def\yy{\mathbf{y}}
\def\HH{\boldsymbol{H}} 
\def\blf{\widehat{a}}
\def\blfalt{a}
\def\KK{\mathcal{K}}
\def\TT{\mathcal{T}}
\def\spec{\mathrm{spec}}
\def\dim{\mathrm{dim}}
\def\dist{\mathrm{dist}}
\def\diam{\mathrm{diam}}
\def\norm#1#2{\|#1\|_{#2}}
\def\seminorm#1#2{\vert #1\vert_{#2}}
\def\eps{\varepsilon}
\def\dual#1#2{\langle#1\,,\,#2\rangle}
\def\normal{\boldsymbol{n}}
\def\curl{{\mathbf{curl}}}
\def\slp{V} 
\def\hyp{W} 
\def\hmax{h}
\def\hmin{\underline{h}}
\def\evmax{\lambda_{\mathrm{max}}}
\def\evmin{\lambda_{\mathrm{min}}}
\def\Evmax{\Lambda_{\mathrm{max}}}
\def\Evmin{\Lambda_{\mathrm{min}}}
\def\bignull{\boldsymbol{0}} 
\def\pphi{\boldsymbol{\Phi}}
\def\Mmat{\mathbf{M}}
\def\rr{\mathbf{r}}
\def\ff{\mathbf{f}}
\newtheorem{theorem}{Theorem}
\newtheorem{proposition}[theorem]{Proposition}
\newtheorem{lemma}[theorem]{Lemma}
\newtheorem{assumption}{Assumption}
\newtheorem{remark}[theorem]{Remark}
\def\subsection#1
\bf\arabic{section}.\arabic{subsection}.~#1.~}
\begin{document}

\title[Wirebasket preconditioner for mortar BEM]
{A wirebasket preconditioner for the\\ mortar boundary element method}

\date{\today}

\author{Thomas~F\"uhrer}
\author{Norbert~Heuer}
\address{Facultad de Matem\'aticas, 
        Pontificia Universidad Cat\'olica de Chile,
        Avenida Vicu\~{n}a Mackenna 4860,
      Santiago, Chile}
\email{\{tofuhrer,nheuer\}@mat.puc.cl}

\keywords{Non-conforming boundary elements, hypersingular operator,
          domain decomposition, mortar method,
          preconditioner, additive Schwarz method}

\subjclass[2010]{65N38, 65N55, 65F08}
\thanks{{\bf Acknowledgments:}
  The first author is supported by CONICYT through FONDECYT project 3150012,
  and the second author by CONICYT through projects
  FONDECYT 1150056 and Anillo ACT1118 (ANANUM)}
\begin{abstract}
We present and analyze a preconditioner of the additive Schwarz type for the
mortar boundary element method. As a basic splitting,
on each subdomain we separate the degrees of freedom
related to its boundary from the inner degrees of freedom.
The corresponding \emph{wirebasket-type} space decomposition is stable up
to logarithmic terms.
For the blocks that correspond to the inner degrees of freedom
standard preconditioners for the hypersingular integral operator on open boundaries
can be used. For the boundary and interface parts as well as the
Lagrangian multiplier space, simple diagonal preconditioners are optimal.
Our technique applies to quasi-uniform and non-uniform meshes of shape-regular
elements.
Numerical experiments on triangular and quadrilateral meshes confirm theoretical
bounds for condition and MINRES iteration numbers.
\end{abstract}
\maketitle

\section{Introduction}

In recent years, different variants of the non-conforming boundary element method (BEM) have been
developed. The underlying boundary integral equation is of the first kind with hypersingular operator.
\emph{Non-conformity} refers to the presence of discontinuous basis functions. (Note that, in the case
of integral equations of the second kind or first kind equations with weakly-singular operator,
conforming basis functions can be discontinuous.)
The first paper on non-conforming BEM considers a Lagrangian multiplier to deal with the homogeneous
boundary condition on open surfaces \cite{ghh09}. This technique was extended in
\cite{mortarBEM} to domain decomposition approximations, and is usually referred to
as \emph{mortar method}. In this paper we study preconditioners for the mortar BEM presented in
\cite{mortarBEM}. These are the first results on preconditioning techniques for linear systems
stemming from non-conforming boundary elements.

Our preconditioner is based on a decomposition of the approximation space and choosing
locally equivalent bilinear forms. It therefore fits the additive Schwarz framework.
There is a large amount of literature on the additive Schwarz method, mainly aiming
at finite element systems, see, e.g., \cite{QuarteroniV_99_DDM,SmithBG_96_DD,ToselliW_05_DDM}
for overviews. For additive Schwarz techniques applied to boundary elements dealing with
hypersingular operators see, e.g., \cite{HeuerS_99_ISH,StephanT_98_DDA,transtep96},
cf. also \cite{Stephan_00_MMh} for an overview. In particular, \cite{HeuerS_99_ISH} considers
a wirebasket-oriented splitting.
Graded meshes on curves, locally refined and anisotropic meshes (on surfaces) have been analyzed,
respectively, in \cite{ffps,HeuerS_03_ODD,Maischak_09_MAS}.
Other variants, also for hypersingular operators, consider overlapping decompositions and
multiplicative applications, see, e.g., \cite{Tran_00_OAS,MaischakST_00_MSA}.

In this paper we extend the additive Schwarz technique to mortar boundary elements. In this case,
due to the presence of a Lagrangian multiplier, system matrices have a saddle point structure.
This structure can be handled by using standard arguments aiming at the minimum residual method (MINRES).
More precisely, the spectrum of the system matrix (with or without preconditioner) is being
controlled by the spectrum of the main block and the singular values of the off-diagonal block
(arising due to the presence of the Lagrangian multiplier), see~\cite{wfs95}.
A second complication due to the non-conformity of the method is that the bilinear form $a(\cdot,\cdot)$
representing the hypersingular operator is replaced by the weakly
singular operator acting on surface differential operators (surface curl).
There are no standard preconditioners for this bilinear form. Our strategy is to split the
subspace of discontinuous basis functions $X_h^1$ from the rest of the approximation space $X_h$.
The remainder $X_h^0$ forms a subspace of the energy space of the hypersingular operator.
It turns out that the bilinear form $a(\cdot,\cdot)$ reduces to the standard one of the
hypersingular operator when restricted to $X_h^0$. In this way, standard preconditioners
like the ones mentioned previously can be applied to this block (actually, there are individual
blocks associated with each subdomain). Now, the other subspace $X_h^1$ contains all the
basis functions associated to interface or boundary nodes. In domain decomposition terms, it
is a wirebasket space and is related with the skeleton (or wirebasket) of a coarse mesh
which is formed by the subdomains of the underlying decomposition. In our case, basis functions
associated to the boundary of a subdomain $\Gamma_i$ can be decoupled from the other elements of $X_h^1$,
they form a subspace $X_{i,1}$. It turns out that the bilinear form $a(\cdot,\cdot)$ restricted
to $X_{i,1}$ is spectrally equivalent to a diagonal matrix
(a mass matrix related to the boundary of $\Gamma_i$). In this way, simple diagonal matrices can be
used (for the preconditioner) to reduce the problem of preconditioning the bilinear form
$a(\cdot,\cdot):\; X_h\times X_h\to\R$ to the standard one of hypersingular operators on each subdomain.
In our numerical examples we will use multilevel diagonal scaling from \cite{ffps} for these parts.

A priori error analysis for domain-oriented non-conforming boundary elements yields quasi-optimal
error estimates which are perturbed by (poly-) logarithmic terms depending on the mesh size,
see~\cite{ChoulyH_12_NDD,ghh09,mortarBEM}. These perturbations appear due to the non-existence
of a well-defined trace operator in the energy space of hypersingular operators.
It is unknown whether estimates of these perturbations are sharp. Naturally, such logarithmic
perturbations also appear in the analysis of additive Schwarz preconditioners, at least when considering
non-overlapping decompositions. Note that, in our method, we subtract a wirebasket space and this
amounts to trace operations at the boundaries of subdomains. Also in the case of finite elements,
such splitting operations cause logarithmic perturbations, see, e.g., \cite{dsw94}.
Eventually, our main result considers combinations of simple diagonal and
multilevel diagonal preconditioners and proves that they are optimal up to poly-logarithmic terms.
In some cases, logarithmic perturbations of condition number bounds
can be optimized by multiplying terms of the preconditioner by different logarithmic
weights, see again, e.g.,~\cite{dsw94}. In this paper we consider three different weightings
(\emph{Cases 1,2,3}) where \emph{Cases 2,3} are optimized to show a bound $O(|\!\log(\underline{h})|^4)$
for the condition number of the preconditioned system. Here, $\underline{h}$ denotes the minimum
of the diameters of all elements. In contrast, for \emph{Case 1} (which does not use logarithmic weights
for the parts dealing with the bilinear form $a(\cdot,\cdot)$) the theoretical bound is
$O(|\!\log(\underline{h})|^5)$, worse than the bounds for \emph{Cases 2,3}. Our numerical experiments,
on the other hand, indicate that \emph{Case 1} is superior to \emph{Cases 2,3}. This suggests that
some of the theoretical bounds used for the analysis are not sharp, at least in the particular situation
of our numerical examples.

An outline of the remainder of this paper is as follows. In the next section we present the model
problem, recall the definition of some Sobolev norms, and present a mortar discretization for
the model problem. In Section~\ref{sec:precond} we recall some results on the MINRES method,
present our subspace decompositions and corresponding preconditioners, and state the main
result (Theorem~\ref{thm:cond}). Proofs are given in Section~\ref{sec:main:proof}.
Some numerical experiments are reported in Section~\ref{sec:examples}.
They all confirm our theoretical estimates from Theorem~\ref{thm:cond}, though exhibit
smaller logarithmic perturbations than predicted. In particular, we also study the case
of locally refined meshes driven by adaptivity, where preconditioners behave as expected.
Let us note that we do not know of any a posteriori error analysis for mortar boundary elements.
The only known results concerning non-conforming BEM consider two-level (or $h-h/2$) estimators
applied to the Nitsche coupling \cite{DominguezH_14_PEA}, not the mortar coupling.

\subsubsection*{Notation}
We abbreviate estimates of the form $A\leq C\cdot B$ with some constant $C>0$ by $A\lesssim B$. 
In particular, we use this notation if $C$ is independent of the mesh size and the number of elements.
Analogously, we use $A\gtrsim B$ for $A\geq C \cdot B$. If both $A\lesssim B$ and $A\gtrsim B$ hold true, we use the
notation $A\simeq B$.
Moreover, $|x|$ denotes the Euclidean norm for a point $x\in\R^3$.

\section{Mortar boundary elements}\label{sec:mortar}
In this section we briefly recall some results on the mortar boundary element method.

\subsection{Model problem and functional analytic setting}
Let $\Gamma \subset \R^2 \times\{0\}$ denote a plane open surface with polygonal boundary $\partial\Gamma$. For
simplicity we refer to $\Gamma$ as a domain in $\R^2$.

We recall some definitions of Sobolev spaces. Let $S\subset \R^2$ be a bounded subset and define for $0<s<1$ the seminorm
\begin{align*}
  \seminorm{u}{H^{s}(S)}^2 := \int_S \int_S \frac{|u(x)-u(y)|^2}{|x-y|^{2(s+1)}} \,dx\,dy.
\end{align*}
Then, $H^s(S)$ is equipped with the norm
\begin{align*}
  \norm{u}{H^s(S)}^2 := \norm{u}{L^2(S)}^2 + \seminorm{u}{H^s(S)}^2,
\end{align*}
and $\widetilde H^s(S)$ is defined as the completion of $C_0^\infty(S)$ with respect to the norm
\begin{align*}
  \norm{u}{\widetilde H^s(S)}^2 := \seminorm{u}{H^s(S)}^2 + \int_S \frac{|u(x)|^2}{\dist(x,\partial S)^{2s}} \,dx,
\end{align*}
where $\dist(x,\partial S) := \inf\limits_{y\in \partial S} |x-y|$.
The dual spaces of $H^s(S)$, resp. $\widetilde H^s(S)$, are denoted by $\widetilde H^{-s}(S)$, resp. $H^{-s}(S)$.
Additionally, $\dual\cdot\cdot_S$ denotes the $L^2(S)$ scalar product, which is continuously extended to the duality
pairing on $\widetilde H^{-s}(S) \times H^s(S)$, resp. $H^{-s}(S) \times \widetilde H^s(S)$.

Let $\normal \in \R^3$ denote a normal vector on $\Gamma$, e.g., $\normal = (0,0,1)^T$. Define the hypersingular integral
operator (formally) by
\begin{align*}
  \hyp u(x) := -\frac{\partial}{\partial\normal_x} \int_\Gamma  u(y) \frac{\partial}{\partial\normal_y} \frac1{|y-x|}\,dy.
\end{align*}
It is well known that this operator extends to a continuous mapping between $\widetilde H^{1/2}(\Gamma)$ and
$H^{-1/2}(\Gamma)$.

Our model problem reads as follows: Given $f\in L^2(\Gamma)$ we seek for a solution $u\in\widetilde{H}^{1/2}(\Gamma)$
such that
\begin{align}\label{eq:conform:cont}
  \dual{\hyp u}{v}_\Gamma = \dual{f}v_\Gamma \quad\forall v\in \widetilde H^{1/2}(\Gamma).
\end{align}
The usual conforming boundary element method consists in replacing $\widetilde{H}^{1/2}(\Gamma)$ by a finite-dimensional subspace
$\widetilde X_h \subset \widetilde{H}^{1/2}(\Gamma)$ and seeking for a solution $\widetilde u_h \in \widetilde X_h$
such that
\begin{align*} 
  \dual{\hyp \widetilde u_h}{\widetilde v_h}_\Gamma = \dual{f}{\widetilde v_h}_\Gamma \quad\forall \widetilde v_h\in \widetilde X_h.
\end{align*}
In this paper, we study preconditioners for a non-conforming scheme that is based on a decomposition of the surface $\Gamma$.
In the next section we introduce the corresponding subspace decomposition. The non-conforming method based on this
decomposition is called mortar method and is presented in Section~\ref{sec_mortar}.

\subsection{Subspace decomposition and meshes}
Let $\Gamma_1,\dots,\Gamma_N$ denote a decomposition into non-intersecting (open) polygonal subdomains giving rise
to the coarse mesh
\begin{align*}
  \TT := \{\Gamma_1,\dots,\Gamma_N\}\quad
  \text{with}\quad \overline\Gamma=\bigcup_{j=1}^N \overline\Gamma_j.
\end{align*}
Each subdomain $\Gamma_i$ is equipped with (a sequence of) regular and quasi-uniform meshes $\TT_i$.
The minimum and maximum diameters of elements of the meshes $\TT_i$ are
denoted by $\hmin_i$ and $\hmax_i$, respectively.
As in~\cite{mortarBEM} we assume without loss of generality that $\hmax_i<1$ and set
\begin{align*}
  \hmin := \min_{i=1,\dots,N} \hmin_i \quad\text{and}\quad \hmax:= \max_{i=1,\dots,N} \hmax_i.
\end{align*}
Let $\KK_i$ denote the set of nodes of $\TT_i$. We will also need the set of interior nodes
$\KK_i^0$ and the set of nodes on the boundary of $\Gamma_i$, $\KK_i^1:=\KK_i\setminus\KK_i^0$.
For a node $z_j \in\KK_i$ we denote by 
$\eta_j^{(i)}$ the (bi)linear basis function which satisfies $\eta_j^{(i)}(z_k) = \delta_{jk}$ for
all $z_k \in\KK_i$.
Introducing the space of piecewise (bi)linear functions
\begin{align*}
  X_{h,i} := \{ v \in C^0(\Gamma_i) \,:\, v = \sum_{z_j \in \KK_i} \alpha_j \eta_j^{(i)} \text{ with } \alpha_j\in\R \}
\end{align*}
we define the product spaces
\begin{align*}
  X_h := \prod_{i=1}^N X_{h,i} \subset H^{1/2}(\TT) := \prod_{i=1}^N H^{1/2}(\Gamma_i).
\end{align*}
We denote the respective degrees of freedom by $K_i := \#\KK_i =\dim(X_{h,i})$
and $K:= \sum_{i=1}^N K_i = \dim(X_h)$.
Note that by definition of $\KK_i$, elements of $X_h$
do not necessarily satisfy the homogeneous boundary condition on $\partial\Gamma$ nor continuity across interfaces
$\partial\Gamma_i\cap\partial\Gamma_j$.

For the mortar BEM, we denote the interface of two
neighboring subdomains $\Gamma_i \neq \Gamma_j$ by
\(
  \gamma_{ij} := \mathrm{int}(\overline\Gamma_i\cap \overline\Gamma_j)
\)
where ``int'' refers to the (relative) interior.
Also, let $\diam(S)$ denote the diameter of a set $S\subseteq\R^3$.
We will need the following assumption.

\begin{assumption}\label{ass:interface}
  Each non-empty interface $\gamma_{ij}$ ($i,j=1,\dots,N$, $i\neq j$) consists of an entire edge of $\Gamma_i$ or $\Gamma_j$.
  If $\diam(\partial\Gamma\cap\partial\Gamma_i)>0$, then $\partial\Gamma\cap\partial\Gamma_i$ is a union of edges of
  $\Gamma_i$.
\end{assumption}

\begin{figure}
  \begin{center}
    \includegraphics[width=0.35\textwidth]{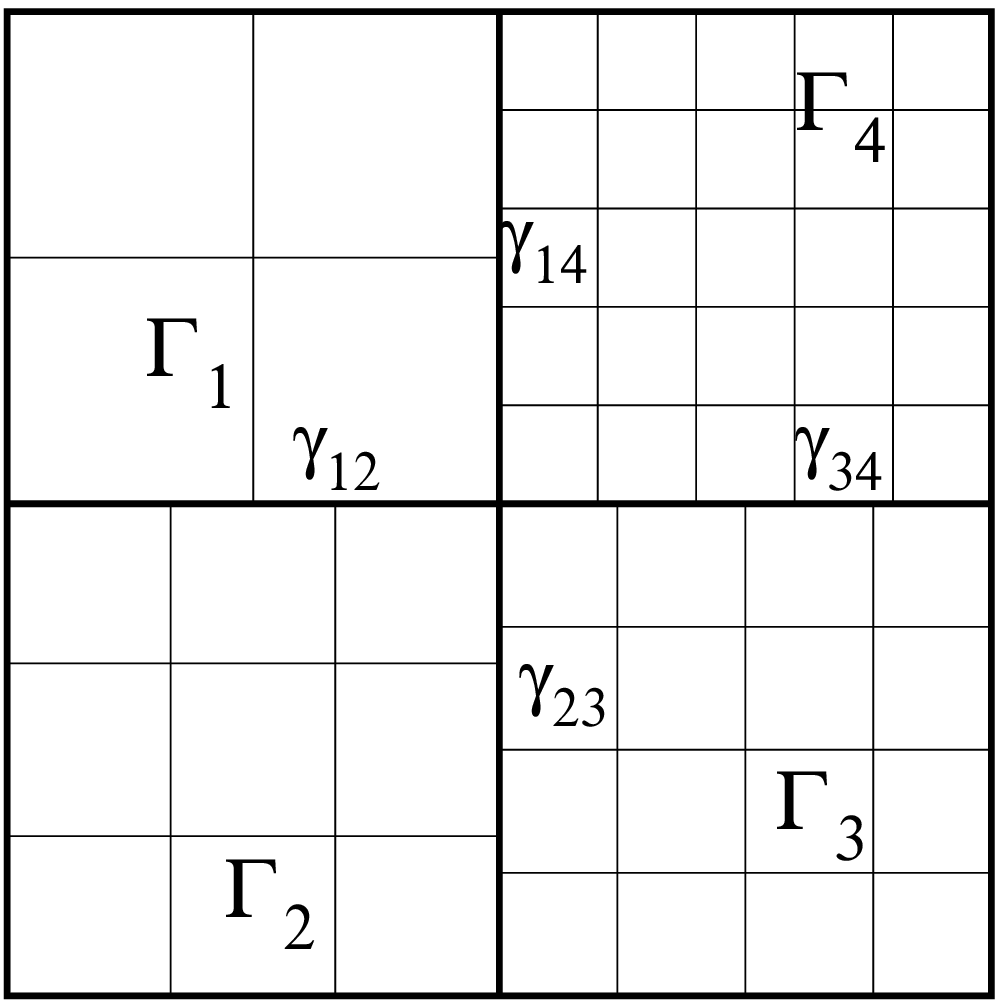}
    \hspace*{4ex}
    \includegraphics[width=0.35\textwidth]{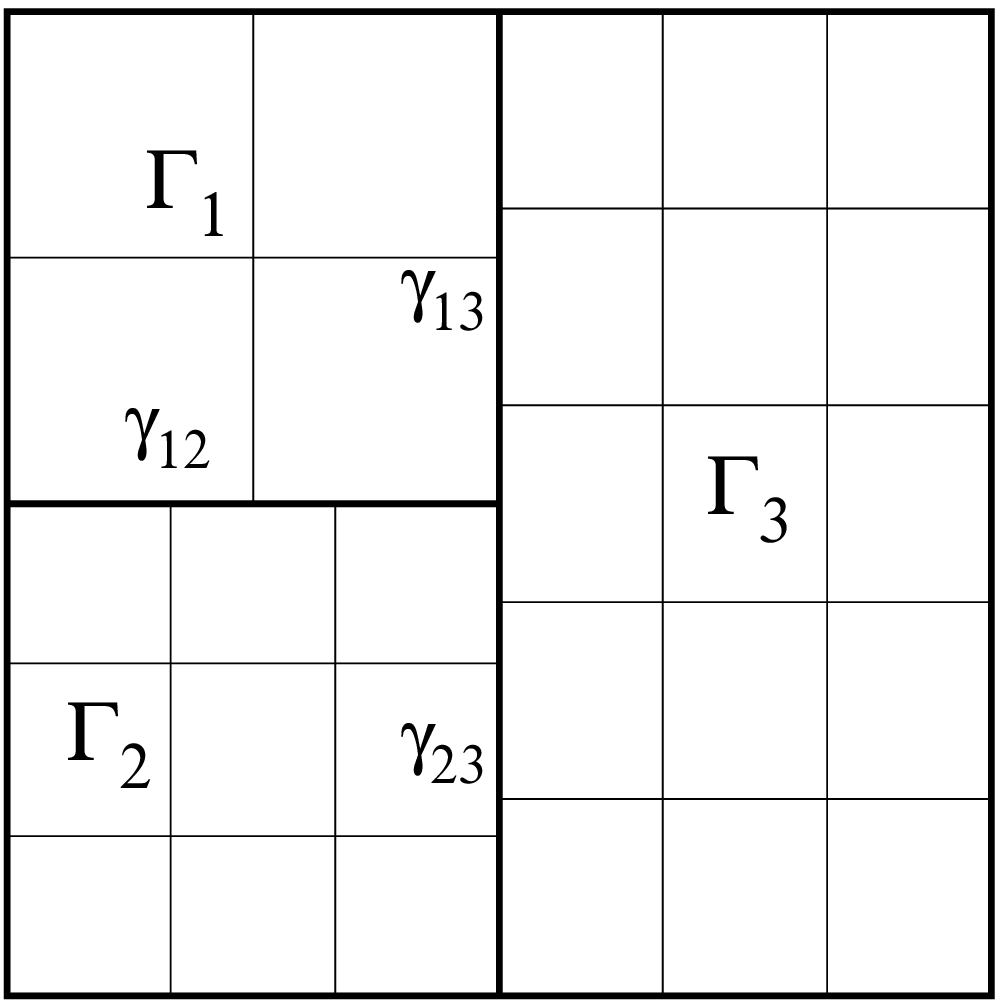}
  \end{center}
  \caption{Subspace decompositions with non-conforming meshes.}
  \label{fig:mortarDecomp}
\end{figure}

Figure~\ref{fig:mortarDecomp} shows examples of two different subspace decompositions with non-conforming meshes.
Given the skeleton
\begin{align*}
  \gamma:= \bigcup_{i=1}^N \partial\Gamma_i,
\end{align*}
we infer from Assumption~\ref{ass:interface} that $\gamma$ is covered by a set of non-intersecting interface edges
and boundary edges
\begin{align*}
  \tau := \{\gamma_1,\dots,\gamma_L\} \quad\text{with}\quad \overline\gamma = \bigcup_{\ell=1}^L \overline\gamma_\ell.
\end{align*}
For each interface edge $\gamma_\ell$, let $\lagside \in\{1,\dots,N\}$ resp. $\morside \in\{1,\dots,N\}$ denote the indices of the subdomains $\Gamma_{\lagside}$ resp.
$\Gamma_{\morside}$ such that
\begin{align*}
  \gamma_\ell = \gamma_{\lagside,\morside} \quad\text{and}\quad \gamma_\ell \text{ is an edge of } \Gamma_{\lagside}.
\end{align*}
In particular, we set $\lagside:=i$ if $\gamma_\ell$ is a boundary edge, i.e., $\gamma_\ell \subseteq \partial\Gamma_i \cap
\partial\Gamma$ so that we can handle the homogeneous boundary conditions and the interface conditions simultaneously.

On each $\gamma_\ell$ we introduce a mesh $\tau_\ell$ such that the following assumption is satisfied.

\begin{assumption}\label{ass:interfacemesh}
  The mesh $\tau_\ell$ of $\gamma_\ell$ is a strict coarsening of $\TT_{\lagside}|_{\gamma_\ell}$.
  In particular, any element of $\tau_\ell$ covers at least two elements of $\TT_{\lagside}|_{\gamma_\ell}$.

  Moreover, the diameters of the elements in $\tau_\ell$ and $\TT_{\lagside}|_{\gamma_\ell}$ are comparable, i.e., there
  exists a constant $C>0$ such that
  \begin{align*}
    C^{-1} \, \diam(t) \leq \diam(T) \leq \diam(t) \quad\text{for all } t\in\tau_\ell \text{ and }
    T\in\TT_{\lagside}|_{\gamma_\ell} \text{ with } T\subset t.
  \end{align*}
  The constant $C>0$ is independent of $\ell$.
\end{assumption}

We also define the discrete spaces on edges,
\begin{align*}
  Y_{h,\ell} := \{ \psi \in L^2(\gamma) \,:\, \psi|_t \text{ is constant for all } t\in\tau_\ell\},
  \quad \ell=1,\ldots,L,
\end{align*}
and the (global) space for the Lagrangian multiplier
\begin{align*}
  Y_h := \prod_{\ell=1}^L Y_{h,\ell}.
\end{align*}
Based on the previous definitions of decompositions, meshes, and subspaces,
we next introduce the mortar boundary element method.

\subsection{Mortar BEM} \label{sec_mortar}
Let $v = (v_1,\dots,v_N)$ with sufficiently smooth component functions $v_j$ defined on $\Gamma_j$.
We define the piecewise differential operator $\curl_H$ by
\begin{align*}
  \curl_H v:= \sum_{i=1}^N (\curl_{\Gamma_i} v_i)^0 \quad\text{with}\quad \curl_{\Gamma_i} v_i = (\partial_y v_i(x,y),
  -\partial_x v_i(x,y),0)
\end{align*}
and $(\cdot)^0$ being the extension by $0$ onto $\Gamma$.

We need the single layer integral operator
\begin{align*}
  \slp\pphi(x) := \int_\Gamma \frac{\pphi(y)}{|x-y|} \,dy,
\end{align*}
which extends to a continuous operator, mapping $\widetilde\HH_t^{-1/2}(\Gamma)$ to $\HH_t^{1/2}(\Gamma)$.
Here
\begin{align*}
  \HH_t^{1/2}(\Gamma) := \{(v_1,v_2,v_3)\in (H^{1/2}(\Gamma))^3 \,:\, v_3 = 0\}
\end{align*}
and $\widetilde\HH_t^{-1/2}(\Gamma)$ is its dual space.
Furthermore, we define the jumps $[v]$ across interface edges $\gamma_\ell$ by
\begin{align*}
  [v]|_{\gamma_\ell} := \begin{cases}
    v_{\lagside}|_{\gamma_\ell} & \text{if } \gamma_\ell \subset \partial\Gamma, \\
    v_{\lagside}|_{\gamma_\ell} - v_{\morside}|_{\gamma_\ell} & \text{else}.
  \end{cases}
\end{align*}

With the definitions of the bilinear forms
\begin{align*}
  \blf(u,v) &:= \dual{\slp\curl_H u}{\curl_H v}_{\TT} := \sum_{i=1}^N \dual{\slp\curl_H u}{\curl_{\Gamma_i}v}_{\Gamma_i},
  \\
  b(u,\psi) &:= \dual{[v]}\psi_\tau := \sum_{\ell=1}^L \dual{[v]}{\psi}_{\gamma_\ell}
\end{align*}
for all $u,v\in H^{1/2+\eps}(\TT)$ and $\psi\in L^2(\gamma)$ for some $\eps>0$ and the right-hand-side functional
\begin{align*}
  F(v):= \sum_{i=1}^N \dual{f}{v_i}_{\Gamma_i},
\end{align*}
we can state the mortar BEM: Find $(u_h,\phi_h) \in X_h \times Y_h$ such that
\begin{equation}
\begin{split}\label{eq:mortarbem}
\begin{aligned}
  &\blf(u_h,v_h) + b(v_h,\phi_h) \quad&=&\quad F(v_h), \\
  &b(u_h,\psi_h) &=&\quad 0
\end{aligned}
\end{split}
\end{equation}
for all $(v_h,\psi_h)\in X_h \times Y_h$. This formulation admits a unique solution.

\begin{theorem}[{\cite[Theorem~2.1]{mortarBEM}}]\label{thm:mortar}
  Let Assumptions~\ref{ass:interface}--\ref{ass:interfacemesh} hold true.
  There exists a unique solution $(u_h,\phi_h) \in X_h\times Y_h$ of~\eqref{eq:mortarbem}. 
  Assume that the exact solution $u$ of~\eqref{eq:conform:cont} satisfies $u\in \widetilde H^{1/2+r}(\Gamma)$ for some
  $r\in(0,1/2]$. Then, there holds
  \begin{align*}
    \norm{u-u_h}{H^{1/2}(\TT)} \lesssim \lvert\log(\hmin)\rvert^2 \hmax^r \norm{u}{\widetilde H^{1/2+r}(\Gamma)}.
  \end{align*}
\end{theorem}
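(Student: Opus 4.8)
The plan is to establish the quasi-optimality and approximation estimate for the mortar BEM solution following the standard Strang-type argument for saddle point problems, but carefully tracking the logarithmic perturbations caused by the absence of a well-defined trace operator in $\widetilde H^{1/2}(\Gamma)$. First I would recall the relevant mapping properties: the bilinear form $\blf(\cdot,\cdot)$ is, by the identity $\hyp = \curl_\Gamma' \slp \curl_\Gamma$ on the conforming space, equivalent to the energy norm of the hypersingular operator on $\widetilde H^{1/2}(\Gamma)$, and it is bounded and coercive on the broken space $H^{1/2}(\TT)$ modulo the kernel handled by the constraint $b(u_h,\psi_h)=0$. The inf-sup stability of $b(\cdot,\cdot)$ on $X_h\times Y_h$ — which is exactly where Assumption~\ref{ass:interfacemesh} (the strict coarsening condition) enters — guarantees, via the Babu\v{s}ka--Brezzi theory, existence and uniqueness of $(u_h,\phi_h)$ and the abstract error bound in terms of best approximation errors plus a consistency (non-conformity) term.

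The core steps I would carry out are: (i) verify discrete inf-sup stability of $b$ with a constant independent of the mesh parameters, using that each interface mesh $\tau_\ell$ is a genuine coarsening of $\TT_{\lagside}|_{\gamma_\ell}$ so that piecewise-constant test functions can be controlled by the piecewise-linear trace space; (ii) bound the consistency error arising because the exact solution $u$, extended trivially to the broken space, does not satisfy the discrete variational equation — here one uses the Galerkin orthogonality for the conforming problem and estimates $\dual{\hyp u - f}{\cdot}$ against the jump terms, which introduces the factor $|\log(\hmin)|$ through an inverse-type or trace inequality on the skeleton; (iii) estimate the best approximation error $\inf_{v_h\in X_h}\|u-v_h\|_{H^{1/2}(\TT)}$ by a Scott--Zhang or Cl\'ement-type quasi-interpolant on each $\TT_i$, giving the $\hmax^r$ factor from $u\in\widetilde H^{1/2+r}(\Gamma)$; (iv) handle the Lagrangian multiplier error, which is controlled by the residual of the first equation and therefore inherits both the $\hmax^r$ and logarithmic factors; (v) combine everything and square the logarithmic factor because the non-conformity term appears both in the measurement of the trace jump and (indirectly) through the stability estimate for the pair.

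The main obstacle is step (ii), the consistency/non-conformity analysis. The difficulty is intrinsic: functions in $\widetilde H^{1/2}(\Gamma)$ have no traces on the skeleton $\gamma$, so the jump terms $\dual{[u-u_h]}{\psi_h}_{\gamma_\ell}$ cannot be bounded directly in the natural norms; one must instead work with the discrete functions, exploit that $u_h|_{\Gamma_i}\in X_{h,i}$ is piecewise polynomial, and pay a logarithmic price $|\log(\hmin)|$ for the discrete trace estimate $\|v_h|_{\partial\Gamma_i}\|_{H^{1/2}(\partial\Gamma_i)}\lesssim |\log(\hmin)|^{1/2}\|v_h\|_{H^{1/2}(\Gamma_i)}$ (or the analogous $L^2$-on-edge estimate). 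Tracking this factor through both the stability bound and the error representation, and verifying that the exponents combine to exactly $|\log(\hmin)|^2$, is the delicate bookkeeping; all remaining estimates (boundedness, coercivity on the constrained space, interpolation) are standard. Since this is precisely the content of \cite[Theorem~2.1]{mortarBEM}, I would ultimately cite that reference, but the sketch above is how the proof proceeds.
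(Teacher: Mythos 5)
The paper offers no proof of this theorem at all: it is imported verbatim from \cite[Theorem~2.1]{mortarBEM} (with the remark that the analysis extends to non-homogeneous boundary data), and your proposal likewise ultimately defers to that reference, so the two take essentially the same approach. Your sketch of the underlying argument (discrete inf-sup via the coarsening Assumption~\ref{ass:interfacemesh}, consistency/non-conformity error paying logarithmic factors through $\eps$-dependent trace estimates, and best approximation giving $\hmax^r$) is consistent with the ingredients the present paper does quote from that reference, such as the $\lvert\log(\hmin)\rvert^2$ continuity of $\blf$ and the Poincar\'e--Friedrichs inequality~\eqref{eq:blfalt:ell:proof2}.
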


\begin{remark}
  The work~\cite{mortarBEM} deals with homogeneous boundary conditions on $\partial\Gamma$. However, the analysis can be
  generalized to the present situation, see~\cite{ghh09} for the case of BEM with Lagrangian multipliers.
  In particular,~\cite[Theorem~2.1]{mortarBEM} holds true if we do not impose homogeneous boundary conditions in $X_h$.
\end{remark}

We note that the bilinear form $\blf(\cdot,\cdot)$ is not elliptic due to the fact that piecewise constant functions $c
= (c_1,\dots,c_N)\in X_h$ with $c_i\in\R$ are in the kernel of $\curl_H(\cdot)$.
Therefore, for our analysis we will use a simple stabilization of $\blf(\cdot,\cdot)$ which is similar to the one that
is often used for hypersingular integral equations on closed surfaces.
\begin{lemma}
  Let $0\neq\alpha\in\R$ denote an arbitrary but fixed constant and 
  let $\xi_\ell \in Y_h$ denote the characteristic function on $\gamma_\ell$, i.e., $\xi_\ell|_{\gamma_k} = 
  \delta_{\ell k}$ and define
  \begin{align}\label{eq:def:blfalt}
    \blfalt(u,v) := \blf(u,v)+ \alpha^2 \sum_{\ell=1}^L b(u,\xi_\ell)b(v,\xi_\ell) \quad\text{for all }u,v\in X_h.
  \end{align}
  Then, the variational equation~\eqref{eq:mortarbem} is equivalent to: Find $(u_h,\phi_h)\in X_h\times Y_h$ such that
  \begin{equation}
  \begin{split}\label{eq:mortarbem:alt}
  \begin{aligned}
    &\blfalt(u_h,v_h) + b(v_h,\phi_h) \quad&=&\quad F(v_h) \\
    &b(u_h,\psi_h) &=&\quad 0
  \end{aligned}
  \end{split}
  \end{equation}
  for all $(v_h,\psi_h)\in X_h \times Y_h$.

  Moreover, for $u,v\in X_h$, there holds
  \begin{align}\label{eq:blfalt:ell}
    \norm{v}{H^{1/2}(\TT)}^2 \lesssim \lvert\log(\hmin)\rvert a(v,v) \quad\text{and}\quad a(u,v) \lesssim \lvert\log(\hmin)\rvert^2 \norm{u}{H^{1/2}(\TT)} \norm{v}{H^{1/2}(\TT)}.
  \end{align}
  The involved constants do not depend on $h$.
\end{lemma}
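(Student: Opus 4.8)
The equivalence of \eqref{eq:mortarbem} and \eqref{eq:mortarbem:alt} is immediate: if $(u_h,\phi_h)$ solves \eqref{eq:mortarbem}, then by the second equation $b(u_h,\psi_h)=0$ for all $\psi_h\in Y_h$, in particular $b(u_h,\xi_\ell)=0$ for all $\ell$, so the added stabilization term $\alpha^2\sum_\ell b(u_h,\xi_\ell)b(v_h,\xi_\ell)$ vanishes and the first equation of \eqref{eq:mortarbem:alt} reduces to that of \eqref{eq:mortarbem}. Conversely, the same argument applies since any solution of \eqref{eq:mortarbem:alt} also satisfies $b(u_h,\psi_h)=0$. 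So the two saddle point problems have the same solution set, and by Theorem~\ref{thm:mortar} this set is a singleton.

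For the norm equivalences \eqref{eq:blfalt:ell}, the plan is to relate $\blfalt(\cdot,\cdot)$ to the broken norm by decomposing a generic $v\in X_h$ into its piecewise-constant part and a remainder. Write $v = c + w$ where $c=(c_1,\dots,c_N)$ with $c_i$ the mean value of $v_i$ on $\Gamma_i$ (or some fixed averaging), so that $\curl_H v = \curl_H w$ and the remainder $w$ satisfies a Poincaré-type estimate $\norm{w_i}{H^{1/2}(\Gamma_i)}\lesssim \lvert\log(\hmin_i)\rvert^{1/2}\seminorm{w_i}{H^{1/2}(\Gamma_i)}$ plus the usual equivalence $\seminorm{w}{H^{1/2}(\TT)}^2\simeq \blf(w,w)=\blf(v,v)$ coming from the mapping properties of $\slp$ and $\curl_H$ (this is the standard reduction of the hypersingular form to the weakly singular operator acting on surface curls, as stated in the introduction). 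The constant part $c$ is controlled by the stabilization: since $\curl_H c=0$, one has $b(c,\xi_\ell) = b(v,\xi_\ell)$ up to the contribution of $w$, and a direct computation shows $\sum_\ell |b(c,\xi_\ell)|^2$ is spectrally equivalent to $\sum_i |c_i|^2\,|\Gamma_i|$ type quantities, hence to $\norm{c}{L^2(\TT)}^2$, using Assumption~\ref{ass:interface} which guarantees the skeleton decouples cleanly into edges. Combining, $\norm{v}{H^{1/2}(\TT)}^2 \lesssim \norm{c}{L^2(\TT)}^2 + \norm{w}{H^{1/2}(\TT)}^2 \lesssim \lvert\log(\hmin)\rvert\bigl(\sum_\ell |b(v,\xi_\ell)|^2 + \blf(v,v)\bigr) \lesssim \lvert\log(\hmin)\rvert\, a(v,v)$, where the logarithmic factor enters only through the Poincaré estimate for $w$.

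For the continuity bound, the mapping properties of $\slp$ and $\curl_H$ give $\blf(u,v)\lesssim \norm{u}{H^{1/2}(\TT)}\norm{v}{H^{1/2}(\TT)}$ directly (no log factor), while the stabilization term is estimated by Cauchy--Schwarz in $\ell^2$ together with $|b(u,\xi_\ell)| = |\dual{[u]}{1}_{\gamma_\ell}| \lesssim \lvert\log(\hmin)\rvert^{?}\,\norm{u}{H^{1/2}(\TT)}$; here one uses that evaluating the jump against a constant on an edge amounts to a (broken) trace, and the trace theorem into $H^{1/2}$ on the skeleton costs a factor $\lvert\log(\hmin)\rvert^{1/2}$ per term, so summing over the $O(1)$ edges meeting each subdomain one arrives at the stated $\lvert\log(\hmin)\rvert^2$.

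\textbf{Main obstacle.} The delicate point is tracking the precise power of the logarithm in the trace-type estimates for the jump functionals $b(\cdot,\xi_\ell)$ and in the Poincaré inequality for the mean-zero remainder $w$ on each subdomain, since $H^{1/2}$ does not admit a bounded trace operator and one must work with the discrete spaces and the minimal mesh size $\hmin$; getting the exponents to match $1$ and $2$ respectively, uniformly in the number of subdomains, is where the real work lies, and it relies essentially on the edge decomposition afforded by Assumptions~\ref{ass:interface}--\ref{ass:interfacemesh}.
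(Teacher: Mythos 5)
Your equivalence argument is exactly the paper's and is fine. For the two estimates in \eqref{eq:blfalt:ell}, however, your plan has a concrete error and leaves the decisive steps unproven. The error is in the continuity bound: you assert $\blf(u,v)\lesssim \norm{u}{H^{1/2}(\TT)}\norm{v}{H^{1/2}(\TT)}$ ``directly (no log factor)''. This is false for the broken space: $\curl_{\Gamma_i}v_i$ extended by zero must be measured in $\widetilde\HH_t^{-1/2}(\Gamma)$, and the extension by zero from $H^{-1/2}(\Gamma_i)$ to $\widetilde H^{-1/2}(\Gamma_i)$ is not bounded uniformly — it costs logarithmic factors via inverse estimates on the discrete space. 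The paper's bound $\blf(u,v)\lesssim\lvert\log(\hmin)\rvert^2\norm{u}{H^{1/2}(\TT)}\norm{v}{H^{1/2}(\TT)}$ (cited from the mortar BEM paper) is precisely where the exponent $2$ in \eqref{eq:blfalt:ell} comes from; your attempt to recover that exponent from the stabilization term instead does not add up ($O(1)$ edges, each contributing $\lvert\log(\hmin)\rvert^{1/2}\cdot\lvert\log(\hmin)\rvert^{1/2}$, gives $\lvert\log(\hmin)\rvert$, not $\lvert\log(\hmin)\rvert^2$; in fact the stabilization term is the harmless one). Moreover, the trace-type estimates whose log powers you flag as ``the main obstacle'' are not an obstacle to be noted but the actual content of the lemma; a proof must supply them, e.g.\ via $\norm{v}{L^2(\partial R)}\lesssim\eps^{-1/2}\norm{v}{H^{1/2+\eps}(R)}$ combined with the inverse estimate $\hmin^{\eps}\seminorm{v}{H^{1/2+\eps}(\TT)}\lesssim\seminorm{v}{H^{1/2}(\TT)}$ and the choice $\eps=\lvert\log(\hmin)\rvert^{-1}$.

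For the lower bound your route (split off piecewise means $c$, Poincar\'e for the mean-zero remainder, control $\norm{c}{L^2}$ by the jump functionals) differs from the paper's, which instead invokes a discrete Poincar\'e--Friedrichs inequality in $H^{1/2+\eps}(\TT)$ whose right-hand side already contains the terms $b(v,\xi_\ell)^2$, extends $\Gamma$ to a larger $\widehat\Gamma$ so that boundary edges become interior edges, and then sets $\eps=\lvert\log(\hmin)\rvert^{-1}$. Your route could in principle work, but two points you wave at must be made precise: (i) the replacement of $b(c,\xi_\ell)$ by $b(v,\xi_\ell)$ ``up to the contribution of $w$'' requires a trace estimate for $w$ on the edges, which reintroduces an $\eps^{-1/2}$ and hence the logarithm — so the claim that ``the logarithmic factor enters only through the Poincar\'e estimate'' is not right (a mean-zero Poincar\'e inequality in $H^{1/2}(\Gamma_i)$ holds with no log at all); and (ii) the algebraic step $\sum_\ell b(c,\xi_\ell)^2\gtrsim\sum_i|c_i|^2$ needs the boundary edges (where $[v]=v_{\lagside}$) to eliminate the global constant, i.e.\ connectivity of the subdomain graph grounded at $\partial\Gamma$ — this is exactly what the paper's extension trick handles and what your appeal to Assumption~\ref{ass:interface} does not by itself deliver.
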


\begin{proof}
  To see the equivalence, we note that $b(u_h,\xi_\ell) =0$ from the second equation of~\eqref{eq:mortarbem}
  resp.~\eqref{eq:mortarbem:alt}, since $\xi_\ell \in Y_h$. Hence, the additional stabilization terms in the definition
  of $\blfalt(\cdot,\cdot)$ always vanish. Note that the solutions $(u_h,\psi_h)$ of~\eqref{eq:mortarbem} 
  and~\eqref{eq:mortarbem:alt} are in fact identical.

  The upper bound in~\eqref{eq:blfalt:ell} follows from the continuity 
  \begin{align*}
    \blf(u,v) \lesssim \lvert\log(\hmin)\rvert^2 \norm{u}{H^{1/2}(\TT)} \norm{v}{H^{1/2}(\TT)}
  \end{align*}
  of $\blf(\cdot,\cdot)$ (see~\cite[Lemma~3.9]{mortarBEM}) and the continuity
  \begin{align*}
    b(u,\psi) \lesssim \lvert\log(\hmin)\rvert^{1/2} \norm{u}{H^{1/2}(\TT)} \norm{\psi}{L^{2}(\gamma)}
  \end{align*}
  of $b(\cdot,\cdot)$ (see~\cite[Lemma~3.14]{mortarBEM}), since
  \begin{align*}
    \sum_{\ell=1}^L b(u,\xi_\ell)b(v,\xi_\ell) \lesssim \lvert\log(\hmin)\rvert \norm{u}{H^{1/2}(\TT)}
    \norm{v}{H^{1/2}(\TT)} \sum_{\ell=1}^L \norm{1}{L^{2}(\gamma_\ell)}^2.
  \end{align*}
  To derive the lower bound in~\eqref{eq:blfalt:ell} we note that the analysis from~\cite{ghh09} and~\cite{mortarBEM} yields
  \begin{align}\label{eq:blfalt:ell:proof1}
    \seminorm{v}{H^{1/2}(\TT)}^2 := \sum_{i=1}^N \seminorm{v_i}{H^{1/2}(\Gamma_i)}^2 \lesssim \blf(v,v) \quad\text{for all } v\in X_h.
  \end{align}
  Moreover, we apply the following result from~\cite[Proposition~3.5]{mortarBEM}, which comes from a discrete
  Poincar\'e-Friedrichs inequality for fractional-order Sobolev spaces proved in~\cite{crBEM}:
  There exists a constant $C>0$ such that for all $\eps\in(0,1/2]$ and any $v\in H^{1/2+\eps}(\TT)$ with
  $v|_{\partial\Gamma}=0$ there holds
  \begin{align}\label{eq:blfalt:ell:proof2}
    C^{-1} \norm{v}{L^2(\Gamma)}^2 \leq \eps^{-1} \seminorm{v}{H^{1/2+\eps}(\TT)}^2 + \sum_{ 
      \substack{\ell\in\{1,\dots,L\} \,: \\
      \gamma_\ell  \text{ is interior edge}} }
    \diam(\gamma_\ell)^{-1-2\eps} \left(\int_{\gamma_\ell} [v] \,ds\right)^2.
  \end{align}
  Let $\widehat\Gamma \supset \Gamma$ denote an extension of $\Gamma$ with $\partial\Gamma \subset \widehat\Gamma$. 
  Moreover, let $\widehat\TT$ denote a subdomain decomposition of $\widehat\Gamma$ with $\TT\subset \widehat\TT$ such
  that the shape regularities of $\TT$ and $\widehat\TT$ are equivalent.
  In particular, $\widehat\TT$ can be chosen such that each boundary edge $\gamma_\ell \subset \partial\Gamma\cap 
  \partial\Gamma_i$ is an interior edge in $\widehat\TT$. Thus,~\eqref{eq:blfalt:ell:proof2} holds true if we replace
  $\Gamma$, resp. $\TT$, with $\widehat\Gamma$, resp. $\widehat\TT$.
  For each $v\in H^{1/2}(\TT)$ we set $\widehat v|_\TT =v$ and $\widehat v|_{\widehat\TT\backslash\TT} :=0$. Then,
  $\widehat v \in H^{1/2}(\widehat\TT)$ and $\norm{\widehat v}{H^{1/2+\eps}(\widehat\TT)} =
  \norm{v}{H^{1/2+\eps}(\TT)}$.
  We infer that
  \begin{align}\label{eq:blfalt:ell:proof3}
  \begin{split}
    \norm{v}{L^2(\Gamma)}^2 &= \norm{\widehat v}{L^2(\widehat\Gamma)}^2 
    \lesssim \eps^{-1} \seminorm{\widehat v}{H^{1/2+\eps}(\widehat\TT)}^2 + \sum_{\ell=1}^L
    \diam(\gamma_\ell)^{-1-2\eps} \left(\int_{\gamma_\ell} [\widehat v] \,ds\right)^2  \\
    &= \eps^{-1} \seminorm{v}{H^{1/2+\eps}(\TT)}^2  + \sum_{\ell=1}^L
    \diam(\gamma_\ell)^{-1-2\eps} b(v,\xi_\ell)^2 
    \\ 
    &\leq \eps^{-1} \seminorm{v}{H^{1/2+\eps}(\TT)}^2  + C' \alpha^2 \sum_{\ell=1}^L b(v,\xi_\ell)^2
  \end{split}
  \end{align}
  with some constant $C'>0$ depending on $\alpha$ and the diameters of $\gamma_\ell$ but not on $\eps$.
  Finally, choosing $\eps = \lvert\log(\hmin)\rvert^{-1}$, the inverse estimate
  \begin{align*}
    \hmin^{\eps} \seminorm{v}{H^{1/2+\eps}(\TT)} \lesssim \seminorm{v}{H^{1/2}(\TT)}
  \end{align*}
  together with~\eqref{eq:blfalt:ell:proof3} and~\eqref{eq:blfalt:ell:proof1} shows ellipticity of
  $\blfalt(\cdot,\cdot)$.
\end{proof}

\subsection{Discretizations}
For real-valued vectors we use bold symbols, e.g., $\xx$. Each vector $\xx\in\R^K$ is uniquely associated to a function
$v\in X_h$ in the following way. Let 
\begin{align*}
  \{\eta_1^{(1)},\dots,\eta_{K_1}^{(1)},\eta_1^{(2)},\dots,\eta_{K_2}^{(2)},\dots,\eta_1^{(N)},\dots,\eta_{K_N}^{(N)} \}
\end{align*}
denote the basis of $X_h$.
For simplicity we use the notation $\eta_1,\dots,\eta_K$ for the basis.
Then, $\xx \in\R^K$ (with $K=\sum_{i=1}^N K_i$) corresponds
to
\begin{align*}
  v = \sum_{i=1}^N \sum_{j=1}^{K_i} \xx_{j + \sum_{k=1}^{i-1}K_k} \eta_j^{(i)} = \sum_{j=1}^K \xx_j \eta_j.
\end{align*}
We define the Galerkin matrix $\Amat \in\R^{K\times K}$ of $\blfalt(\cdot,\cdot)$ as 
\begin{align*}
  \Amat_{jk} := \blfalt(\eta_k,\eta_j) \quad\text{for }j,k=1,\dots,K.
\end{align*}
Let $\{\chi_j^{(\ell)}\}$ denote the basis of $Y_{h,\ell}$ with $\chi_j^{(\ell)}|_{t_k} = \delta_{jk}$ for
$t_k\in\tau_\ell$.
Analogously as before, we write $\chi_1,\dots,\chi_M$ with $M:= \sum_{\ell=1}^L M_\ell := \sum_{\ell=1}^L \# Y_{h,\ell}$
for the corresponding basis of $Y_h$. Then, each $\psi \in Y_h$ can be written as
\begin{align*}
  \psi = \sum_{j=1}^M \yy_j \chi_j \quad\text{for some } \yy\in\R^M.
\end{align*}
We define the matrix $\Bmat\in\R^{M\times K}$ by
\begin{align*}
  \Bmat_{jk} := b(\eta_k,\chi_j) \quad j=1,\dots,M, \, k=1,\dots,K.
\end{align*}
Denoting the right-hand side vector by $\ff\in\R^K$ with $\ff_k := F(\eta_k)$, the formulation~\eqref{eq:mortarbem:alt} is
equivalent to the matrix-vector equation: Find $(\xx,\yy)^T\in\R^{K+M}$ such that
\begin{align*}
  \sysmat
  \begin{pmatrix}
    \xx \\ \yy
  \end{pmatrix}
  :=
  \begin{pmatrix}
    \Amat & \Bmat^T \\
    \Bmat & \bignull
  \end{pmatrix}
  \begin{pmatrix}
    \xx \\ \yy
  \end{pmatrix}
  =
  \begin{pmatrix}
    \ff \\ \bignull
  \end{pmatrix}.
\end{align*}

\section{Preconditioning}\label{sec:precond}
In this section we analyze different wirebasket preconditioners for the mortar BEM considered in
Section~\ref{sec:mortar}.
First, we recall results on the MINRES method.

\subsection{Minimal residual method}\label{sec:minres}
Throughout we consider the preconditioned minimal residual method (MINRES) with inner products $\dual\xx\yy_\Pmat :=
\yy^T\Pmat\xx$ 
induced by block-diagonal preconditioners of the form
\begin{align} \label{def:pre}
  \Pmat = \begin{pmatrix}
    \Pmat_\Amat &  \\
    & \Pmat_\Bmat
  \end{pmatrix},
\end{align}
where the blocks $\Pmat_\Amat\in\R^{K\times K}$, $\Pmat_\Bmat\in\R^{M\times M}$ are symmetric and positive definite.
(Here and in the following, empty spaces represent null matrices of appropriate dimensions.)
The preconditioned system then reads
\begin{align*}
  \Pmat^{-1} \sysmat =
  \begin{pmatrix}
    \Pmat_\Amat^{-1} \Amat & \Pmat_\Amat^{-1} \Bmat^T \\
    \Pmat_\Bmat^{-1} \Bmat & 
  \end{pmatrix}
\end{align*}
Furthermore, define the matrix
\begin{align*}
  \widetilde\sysmat := 
  \begin{pmatrix}
    \widetilde\Amat & \widetilde\Bmat^T \\
    \widetilde\Bmat & 
  \end{pmatrix} 
  :=
  \begin{pmatrix}
    \Pmat_\Amat^{-1/2} \Amat \Pmat_\Amat^{-1/2} & \Pmat_\Amat^{-1/2} \Bmat^T \Pmat_\Bmat^{-1/2} \\
    \Pmat_\Bmat^{-1/2} \Bmat \Pmat_\Amat^{-1/2} &
  \end{pmatrix}
  =  \Pmat^{-1/2} \sysmat \Pmat^{-1/2}.
\end{align*}
We note that there holds $\spec(\widetilde\sysmat) = \spec(\Pmat^{-1}\sysmat)$ for the respective spectra.
Let 
\begin{align}\label{eq:def:ev_sv}
  \Evmin \leq \Evmax
  \quad\text{and}\quad
  \Sigma_1 \leq  \dots \leq \Sigma_m
\end{align}
denote, respectively, the extremal eigenvalues of $\widetilde\Amat$ and
the nonzero singular values of $\widetilde\Bmat$. Of course, they are all positive.
We also define the condition number
\[
   \kappa(\widetilde\sysmat)
   :=
   \max\{|\lambda|;\; \lambda\in\spec(\widetilde\sysmat)\}/\min\{|\lambda|;\; \lambda\in\spec(\widetilde\sysmat)\}.
\]
The following is a well-established result, see, e.g.,~\cite{wfs95}.

\begin{proposition}\label{prop:minres}
  Denote by $\rr^{(k)} := \Pmat^{-1}(\ff-\sysmat \xx^{(k)})$ the residual of the
  $k$-th preconditioned MINRES iteration $\xx^{(k)}$ with inner product $\dual\cdot\cdot_\Pmat$.
  Then there holds
  \begin{align*}
    \frac{\norm{\rr^{(k)}}\Pmat^2}{\norm{\rr^{(0)}}\Pmat^2} \leq 2
    \left(\frac{\kappa(\widetilde\sysmat)-1}{\kappa(\widetilde\sysmat)+1} \right)^k.
  \end{align*}
  so that the number of preconditioned MINRES iterations, required to reduce the initial residual
  to a certain percentage, is bounded by $O(\kappa(\widetilde\sysmat))$.
\end{proposition}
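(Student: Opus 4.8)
The plan is to transfer the preconditioned iteration to standard (Euclidean) MINRES for a symmetric matrix, and then to invoke the classical polynomial min--max estimate. First observe that $\sysmat$ is symmetric: the Galerkin matrix $\Amat$ of the symmetric bilinear form $\blfalt(\cdot,\cdot)$ is symmetric, the off-diagonal blocks are $\Bmat$ and $\Bmat^T$, and the $(2,2)$-block vanishes. Since $\Pmat$ is symmetric positive definite, $\widetilde\sysmat=\Pmat^{-1/2}\sysmat\Pmat^{-1/2}$ is symmetric and $\Pmat^{-1}\sysmat=\Pmat^{-1/2}\widetilde\sysmat\Pmat^{1/2}$ is similar to $\widetilde\sysmat$; hence $\spec(\widetilde\sysmat)=\spec(\Pmat^{-1}\sysmat)\subset\R$, and since the discrete system~\eqref{eq:mortarbem:alt} is well posed, $0\notin\spec(\widetilde\sysmat)$, so that $\kappa(\widetilde\sysmat)$ is finite. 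Next, perform the change of variables $\mathbf{z}^{(k)}:=\Pmat^{1/2}\xx^{(k)}$. A short computation gives
\[
  \norm{\rr^{(k)}}{\Pmat}^2 = (\ff-\sysmat\xx^{(k)})^T\Pmat^{-1}(\ff-\sysmat\xx^{(k)}) = \norm{\Pmat^{-1/2}\ff-\widetilde\sysmat\mathbf{z}^{(k)}}{2}^2,
\]
so the $\Pmat$-norm of the $k$-th preconditioned residual is the Euclidean norm of the $k$-th residual of the symmetric system $\widetilde\sysmat\mathbf{z}=\Pmat^{-1/2}\ff$; moreover $\Pmat^{1/2}$ maps the affine Krylov space underlying the $k$-th preconditioned MINRES iterate exactly onto the one underlying plain MINRES for $\widetilde\sysmat$. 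Since both iterations are defined by minimising the respective residual norm over that affine Krylov space, the change of variables identifies the preconditioned iteration with plain MINRES for $\widetilde\sysmat$, and it suffices to prove the bound for the latter applied to the symmetric invertible matrix $\widetilde\sysmat$.

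For that, the optimality of MINRES together with the spectral decomposition of $\widetilde\sysmat$ yields
\begin{align*}
  \frac{\norm{\rr^{(k)}}{\Pmat}}{\norm{\rr^{(0)}}{\Pmat}}
  &= \min_{\substack{\deg p\le k\\ p(0)=1}} \frac{\norm{p(\widetilde\sysmat)\,(\Pmat^{-1/2}\ff-\widetilde\sysmat\mathbf{z}^{(0)})}{2}}{\norm{\Pmat^{-1/2}\ff-\widetilde\sysmat\mathbf{z}^{(0)}}{2}} \\
  &\le \min_{\substack{\deg p\le k\\ p(0)=1}}\ \max_{\lambda\in\spec(\widetilde\sysmat)}|p(\lambda)|,
\end{align*}
the minima being taken over polynomials $p$. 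Since $\spec(\widetilde\sysmat)\subset\R$ straddles the origin, one cannot apply a one-interval Chebyshev bound directly; instead one uses an \emph{even} comparison polynomial $p(\lambda)=q(\lambda^2)$, with $q$ the degree-$\lfloor k/2\rfloor$ Chebyshev polynomial rescaled to $[a^2,b^2]$ and normalised at the origin, where $a:=\min\{|\lambda|:\lambda\in\spec(\widetilde\sysmat)\}$ and $b:=\max\{|\lambda|:\lambda\in\spec(\widetilde\sysmat)\}$, so that $b/a=\kappa(\widetilde\sysmat)$. The standard Chebyshev estimate then bounds the min--max quantity by $2\,\big((\kappa-1)/(\kappa+1)\big)^{\lfloor k/2\rfloor}$; squaring and keeping track of constants as in~\cite{wfs95} gives the asserted inequality for $\norm{\rr^{(k)}}{\Pmat}^2/\norm{\rr^{(0)}}{\Pmat}^2$. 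Finally, solving $2\,\big((\kappa-1)/(\kappa+1)\big)^{k}\le\delta$ for $k$ and using $-\log\big((\kappa-1)/(\kappa+1)\big)=\log\big(1+2/(\kappa-1)\big)\gtrsim\kappa^{-1}$ shows that $O(\kappa(\widetilde\sysmat))$ iterations reduce the residual to any fixed fraction $\delta\in(0,1)$.

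The one genuinely delicate point is the indefiniteness of $\widetilde\sysmat$: because the spectrum lies on both sides of $0$ one is forced to the even comparison polynomial --- equivalently, MINRES for $\widetilde\sysmat$ behaves like CG for $\widetilde\sysmat^{2}$, whose condition number is $\kappa(\widetilde\sysmat)^{2}$ --- and this produces the exponent $\lfloor k/2\rfloor$, hence the dependence on $\kappa(\widetilde\sysmat)$ rather than on its square root. Everything else is routine linear algebra; the quantities introduced in~\eqref{eq:def:ev_sv} play no role here, but are exactly what one uses afterwards to bound $\kappa(\widetilde\sysmat)$ via an eigenvalue localization of the saddle-point form. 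The precise constants are those recorded in~\cite{wfs95}.
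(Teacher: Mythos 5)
Your argument is essentially the standard one behind the cited result: the paper itself gives no proof of Proposition~\ref{prop:minres} but simply refers to~\cite{wfs95}, and your reduction to unpreconditioned MINRES for $\widetilde\sysmat$ via $\Pmat^{1/2}$, followed by the min--max characterization and the even Chebyshev comparison polynomial on $[-b,-a]\cup[a,b]$, is exactly how that reference (and the textbook literature) proceeds. The steps are correct, including the identity $\norm{\rr^{(k)}}{\Pmat}^2=(\ff-\sysmat\xx^{(k)})^T\Pmat^{-1}(\ff-\sysmat\xx^{(k)})$ and the observation that indefiniteness forces the exponent $\lfloor k/2\rfloor$, whence the $O(\kappa(\widetilde\sysmat))$ iteration count rather than $O(\kappa^{1/2})$.

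One small quantitative caveat: squaring the bound $2\bigl((\kappa-1)/(\kappa+1)\bigr)^{\lfloor k/2\rfloor}$ yields $4\bigl((\kappa-1)/(\kappa+1)\bigr)^{2\lfloor k/2\rfloor}$, which for odd $k$ loses one power and in any case carries the constant $4$ rather than $2$; so the displayed inequality with constant $2$ and exponent $k$ does not literally follow from your chain without the sharper form $1/T_m(\cdot)=2\sigma^m/(1+\sigma^{2m})$ or a restriction to even $k$. This looseness is inherited from the way the estimate is commonly quoted (the proposition's exact constants are immaterial for the $O(\kappa(\widetilde\sysmat))$ conclusion), but if you want the stated inequality verbatim you should either track the $(1+\sigma^{2m})$ denominator or state the bound for $k$ even, as~\cite{wfs95} effectively does.
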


Bounds for the spectrum of $\widetilde\sysmat$ can be specified in terms of the eigenvalues of $\widetilde\Amat$ and
singular values of $\widetilde\Bmat$.

\begin{proposition}[{\cite[Lemma~2.1]{rusWin92}}]\label{prop:evsysmat}
  There holds
  \begin{align*}
  \begin{split}
    \spec(\Pmat^{-1}\sysmat) \subseteq 
    &[\tfrac12(\Evmin-\sqrt{\Evmin^2 + 4\Sigma_m^2}, \tfrac12(\Evmax - \sqrt{\Evmax^2+4\Sigma_1^2})] \\
    &\qquad \cup [\Evmin,\tfrac12(\Evmax+\sqrt{\Evmax^2 + 4\Sigma_m^2})]
  \end{split}
  \end{align*}
  with $\Evmin$, $\Evmax$, $\Sigma_1$, $\Sigma_m$ being the numbers from~\eqref{eq:def:ev_sv}.
\end{proposition}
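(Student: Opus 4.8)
The plan is to exploit the identity $\spec(\Pmat^{-1}\sysmat)=\spec(\widetilde\sysmat)$ recorded above together with the symmetry of $\widetilde\sysmat$, and to work directly with the eigenvalue relation
\begin{align*}
  \widetilde\Amat\mathbf{u}+\widetilde\Bmat^T\mathbf{p}=\lambda\mathbf{u},
  \qquad
  \widetilde\Bmat\mathbf{u}=\lambda\mathbf{p},
\end{align*}
for a real $\lambda$ and $(\mathbf{u},\mathbf{p})\neq(\bignull,\bignull)$. Here $\widetilde\Amat$ is symmetric positive definite with $\spec(\widetilde\Amat)\subseteq[\Evmin,\Evmax]$, and $\widetilde\Bmat$ has full row rank (the discrete inf--sup property underlying the unique solvability in Theorem~\ref{thm:mortar}); this is what makes $\Sigma_1>0$, rules out $\lambda=0$, and gives $\Sigma_1^2\|\mathbf{q}\|^2\le\|\widetilde\Bmat^T\mathbf{q}\|^2\le\Sigma_m^2\|\mathbf{q}\|^2$ for all vectors $\mathbf{q}$. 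I would then distinguish the three cases $\mathbf{p}=\bignull$, $\mathbf{p}\neq\bignull$ with $\lambda>0$, and $\lambda<0$.

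If $\mathbf{p}=\bignull$, the second equation gives $\widetilde\Bmat\mathbf{u}=\bignull$ with $\mathbf{u}\neq\bignull$, and the first reduces to $\widetilde\Amat\mathbf{u}=\lambda\mathbf{u}$, so $\lambda\in[\Evmin,\Evmax]$, which lies in the second interval since $\tfrac12(\Evmax+\sqrt{\Evmax^2+4\Sigma_m^2})\ge\Evmax$. If $\mathbf{p}\neq\bignull$ and $\lambda>0$, then $\mathbf{u}\neq\bignull$ (else $\mathbf{p}=\bignull$); eliminating $\mathbf{p}=\lambda^{-1}\widetilde\Bmat\mathbf{u}$ and testing the first equation with $\mathbf{u}$ yields the scalar identity $\lambda^2-\alpha\lambda-\beta=0$ with $\alpha:=\mathbf{u}^T\widetilde\Amat\mathbf{u}/\|\mathbf{u}\|^2\in[\Evmin,\Evmax]$ and $\beta:=\|\widetilde\Bmat\mathbf{u}\|^2/\|\mathbf{u}\|^2\in[0,\Sigma_m^2]$. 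Since $\alpha>0$, the relevant (positive) root is $\lambda=\tfrac12(\alpha+\sqrt{\alpha^2+4\beta})$, and, as this expression is nondecreasing in both $\alpha$ and $\beta$, one obtains $\Evmin\le\lambda\le\tfrac12(\Evmax+\sqrt{\Evmax^2+4\Sigma_m^2})$, again the second interval.

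The case $\lambda<0$ is the crux. Since $\lambda<0<\Evmin$, the symmetric matrix $\lambda\II-\widetilde\Amat$ is negative definite, hence invertible; solving the first equation for $\mathbf{u}$ and inserting into the second gives $\widetilde\Bmat(\lambda\II-\widetilde\Amat)^{-1}\widetilde\Bmat^T\mathbf{p}=\lambda\mathbf{p}$. Testing with $\mathbf{p}$ and writing $\mathbf{w}:=\widetilde\Bmat^T\mathbf{p}\neq\bignull$ gives $\lambda\|\mathbf{p}\|^2=\mathbf{w}^T(\lambda\II-\widetilde\Amat)^{-1}\mathbf{w}$. The eigenvalues of $(\lambda\II-\widetilde\Amat)^{-1}$ lie in $\big[(\lambda-\Evmin)^{-1},(\lambda-\Evmax)^{-1}\big]$, both negative, so combining with $\Sigma_1^2\|\mathbf{p}\|^2\le\|\mathbf{w}\|^2\le\Sigma_m^2\|\mathbf{p}\|^2$ yields $\lambda(\lambda-\Evmax)\ge\Sigma_1^2$ and $\lambda(\lambda-\Evmin)\le\Sigma_m^2$. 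Since $\lambda<0$, these two quadratic inequalities force, respectively, $\lambda\le\tfrac12(\Evmax-\sqrt{\Evmax^2+4\Sigma_1^2})$ and $\lambda\ge\tfrac12(\Evmin-\sqrt{\Evmin^2+4\Sigma_m^2})$, i.e.\ $\lambda$ lies in the first interval. Collecting the three cases gives the asserted spectral inclusion; alternatively, since the statement is exactly \cite[Lemma~2.1]{rusWin92}, one may simply invoke it.

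I expect the negative-eigenvalue case to be the only real obstacle: one must rule out negative eigenvalues arbitrarily close to $0$, and this is precisely where the invertibility of $\lambda\II-\widetilde\Amat$ and, crucially, the full-rank (inf--sup) property of $\widetilde\Bmat$ — hence the lower bound $\|\widetilde\Bmat^T\mathbf{p}\|\ge\Sigma_1\|\mathbf{p}\|$ — enter. The other two cases are elementary Rayleigh-quotient estimates. (Were $\widetilde\Bmat$ allowed to be rank-deficient, $0$ would become an eigenvalue, with eigenvectors $(\bignull,\mathbf{p})$ for $\mathbf{p}\in\ker\widetilde\Bmat^T$, and would have to be appended to the inclusion.)
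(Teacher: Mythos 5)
Your proof is correct. Note that the paper does not prove this proposition at all --- it is quoted verbatim from Rusten and Winther \cite[Lemma~2.1]{rusWin92} --- and your argument (splitting into $\mathbf{p}=\bignull$, $\lambda>0$, and $\lambda<0$, with the Schur-complement/Rayleigh-quotient estimate $\lambda(\lambda-\Evmax)\ge\Sigma_1^2$, $\lambda(\lambda-\Evmin)\le\Sigma_m^2$ in the negative case) is essentially the standard proof of that cited lemma. You also correctly identify the role of the full row rank of $\widetilde\Bmat$, which in this paper follows from the discrete inf--sup condition of \cite[Lemma~3.12]{mortarBEM} and is used explicitly in the proof of Lemma~\ref{lem:svbounds}.
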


\subsection{Preconditioner and main results}
Our preconditioning technique is based on an initial decomposition of $X_h$ into wirebasket components
related with the coarse mesh $\TT$ and the remainder. Then, individual preconditioners are applied to
the three spaces of wirebasket and interior components and the Lagrangian multiplier.

\subsubsection{Wirebasket splitting.}
For the initial decomposition of $X_h$, we define for each $v_i \in X_{h,i}$ the unique representation
\begin{align} \label{splitting}
  v_i = v_{i,1} + v_{i,0} \quad\text{with}\quad v_{i,0}\in X_{i,0} := \{w\in X_{h,i} \,:\, w|_{\partial\Gamma_i}=0 \}
  \text{ and } v_{i,1}:=v_i-v_{i,0},
\end{align}
and the preconditioning forms $d_j: X_h\times X_h \to \R$ ($j=1,2,3$) defined by
\begin{subequations}\label{eq:defprecform}
\begin{align}
  d_1(u,v) &:=\sum_{i=1}^N \dual{u_{i,1}|_{\partial\Gamma_i}}{v_{i,1}|_{\partial\Gamma_i}}_{L^2(\partial\Gamma_i)} 
  + \sum_{i=1}^N \dual{\hyp_i u_{i,0}}{v_{i,0}}_{\Gamma_i}, \\
  d_2(u,v) &:=\sum_{i=1}^N \dual{u_{i,1}|_{\partial\Gamma_i}}{v_{i,1}|_{\partial\Gamma_i}}_{L^2(\partial\Gamma_i)} 
  + \frac{1}{\lvert\log(\hmin)\rvert^2} \sum_{i=1}^N \dual{\hyp_i u_{i,0}}{v_{i,0}}_{\Gamma_i}, \\
  d_3(u,v) &:=\sum_{i=1}^N \lvert\log(\hmin)\rvert
  \dual{u_{i,1}|_{\partial\Gamma_i}}{v_{i,1}|_{\partial\Gamma_i}}_{L^2(\partial\Gamma_i)} 
  + \sum_{i=1}^N \dual{\hyp_i u_{i,0}}{v_{i,0}}_{\Gamma_i}.
\end{align}
\end{subequations}
Here, $\hyp_i$ is the hypersingular integral operator associated with the subdomain $\Gamma_i$.
Note that, with $\slp_i$ being the simple-layer integral operator associated with $\Gamma_i$, we have
\begin{align*}
  \dual{\slp_i \curl_{\Gamma_i}v_{i,0}}{\curl_{\Gamma_i} v_{i,0}}_{\Gamma_i} 
= \dual{\hyp_i v_{i,0}}{v_{i,0}}_{\Gamma_i} \simeq \norm{v_{i,0}}{\widetilde H^{1/2}(\Gamma_i)}^2.
\end{align*}
Definition~\eqref{eq:defprecform} provides, up to logarithmic terms, stable splittings.
\begin{lemma}\label{lem:wbsplitting}
  For all $v \in X_h$ there holds
  \begin{align*}
    \lvert\log(\hmin)\rvert^{-3} d_1(v,v) &\lesssim \blfalt(v,v) \lesssim \lvert\log(\hmin)\rvert^2 d_1(v,v), \\
    \lvert\log(\hmin)\rvert^{-2} d_2(v,v) &\lesssim \blfalt(v,v) \lesssim \lvert\log(\hmin)\rvert^2 d_2(v,v), \\
    \lvert\log(\hmin)\rvert^{-3} d_3(v,v) &\lesssim \blfalt(v,v) \lesssim \lvert\log(\hmin)\rvert d_3(v,v).
  \end{align*}
\end{lemma}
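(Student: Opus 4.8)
The plan is to establish the two-sided estimates in Lemma~\ref{lem:wbsplitting} by exploiting the splitting $v_i = v_{i,1}+v_{i,0}$ and treating the wirebasket part and the interior part separately, then combining with the norm equivalences~\eqref{eq:blfalt:ell}. The starting point is to observe that $a(v,v)\simeq\seminorm{v}{H^{1/2}(\TT)}^2 + \sum_\ell b(v,\xi_\ell)^2$ up to the logarithmic factors in~\eqref{eq:blfalt:ell}, and that $\norm{v}{H^{1/2}(\TT)}^2 = \sum_i \norm{v_i}{H^{1/2}(\Gamma_i)}^2$ decouples over subdomains, so it suffices to work subdomain by subdomain.

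First I would handle the interior part. Since $v_{i,0}\in X_{i,0}$ vanishes on $\partial\Gamma_i$, we have $\dual{\hyp_i v_{i,0}}{v_{i,0}}_{\Gamma_i} \simeq \norm{v_{i,0}}{\widetilde H^{1/2}(\Gamma_i)}^2$ by the stated mapping properties of $\slp_i$, and also $\norm{v_{i,0}}{\widetilde H^{1/2}(\Gamma_i)}\simeq\norm{v_{i,0}}{H^{1/2}(\Gamma_i)}$ (equivalence of $\widetilde H^{1/2}$ and $H^{1/2}$ norms on functions with zero boundary trace up to logarithmic terms, or directly). Then I would treat the wirebasket part $v_{i,1}$, which is the discrete harmonic-type extension determined by its boundary values $v_{i,1}|_{\partial\Gamma_i}=v_i|_{\partial\Gamma_i}$. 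The key ingredient here is the standard wirebasket estimate: for a piecewise (bi)linear discrete function on a quasi-uniform mesh of $\Gamma_i$ with given boundary values, the $H^{1/2}(\Gamma_i)$-norm of the (minimal, or any quasi-optimal) extension is equivalent, up to a factor $\lvert\log(\hmin_i)\rvert$, to $\norm{v_{i,1}|_{\partial\Gamma_i}}{L^2(\partial\Gamma_i)}^2$ suitably scaled, cf.\ the wirebasket analysis in~\cite{HeuerS_99_ISH,dsw94}. I would also need that the jump terms $\sum_\ell b(v,\xi_\ell)^2 = \sum_\ell(\int_{\gamma_\ell}[v]\,ds)^2$ are controlled by $\sum_i\norm{v_i|_{\partial\Gamma_i}}{L^2(\partial\Gamma_i)}^2$, which is immediate from Cauchy--Schwarz on each edge, and conversely that the $L^2(\partial\Gamma_i)$-masses are not much larger than $\seminorm{v_i}{H^{1/2}(\Gamma_i)}^2$ plus jump contributions, using a discrete trace-type inequality combined with the Poincar\'e--Friedrichs inequality~\eqref{eq:blfalt:ell:proof2} as in the ellipticity proof above.

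Assembling: for the upper bound on $a(v,v)$ I would write $a(v,v)\lesssim\lvert\log\hmin\rvert^2(\seminorm{v}{H^{1/2}(\TT)}^2 + \sum_\ell b(v,\xi_\ell)^2)$, split each $v_i$, bound $\seminorm{v_i}{H^{1/2}(\Gamma_i)}^2\lesssim \norm{v_{i,0}}{H^{1/2}(\Gamma_i)}^2 + \norm{v_{i,1}}{H^{1/2}(\Gamma_i)}^2 \lesssim \dual{\hyp_i v_{i,0}}{v_{i,0}}_{\Gamma_i} + \lvert\log\hmin\rvert\norm{v_{i,1}|_{\partial\Gamma_i}}{L^2(\partial\Gamma_i)}^2$, and conclude the $d_1$ upper bound with exponent $2$ on the log (the extra wirebasket log is absorbed into the $d_1$ weighting for $d_2,d_3$, giving exponents $2$ and $1$ respectively, which is precisely why the weightings differ). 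For the lower bounds I would run the reverse chain: from $a(v,v)\gtrsim\lvert\log\hmin\rvert^{-1}\norm{v}{H^{1/2}(\TT)}^2$ and the stabilization term, recover $\dual{\hyp_i v_{i,0}}{v_{i,0}}_{\Gamma_i}\lesssim\lvert\log\hmin\rvert\, a(v,v)$ for the interior part and $\norm{v_{i,1}|_{\partial\Gamma_i}}{L^2(\partial\Gamma_i)}^2\lesssim\lvert\log\hmin\rvert^2 a(v,v)$ for the wirebasket part (one log from the discrete trace inequality, one from the ellipticity constant), summing to the stated $\lvert\log\hmin\rvert^{-3}$, $\lvert\log\hmin\rvert^{-2}$, $\lvert\log\hmin\rvert^{-3}$ prefactors according to the three weightings. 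The main obstacle I expect is getting the wirebasket trace equivalence with the sharp logarithmic power — in particular, controlling $\norm{v_{i,1}|_{\partial\Gamma_i}}{L^2(\partial\Gamma_i)}$ by $\norm{v_i}{H^{1/2}(\Gamma_i)}$ plus interface-jump data without losing more than one logarithmic factor, which requires a careful discrete Sobolev/trace inequality on the quasi-uniform mesh of $\Gamma_i$ together with a Poincar\'e-type argument to handle the constant functions in the kernel of $\curl_H$; the bookkeeping of which log goes where across the three cases is the other delicate point.
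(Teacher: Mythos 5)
Your overall architecture (split $v_i=v_{i,0}+v_{i,1}$, treat the two parts separately, combine with the ellipticity/continuity of $\blfalt$) matches the paper's, but the log-bookkeeping in both key estimates is off, and in each case the error traces back to a concrete missing ingredient. For the upper bound, the correct facts are asymmetric: $\norm{v_{i,1}}{H^{1/2}(\Gamma_i)}^2\lesssim\norm{v_{i,1}|_{\partial\Gamma_i}}{L^2(\partial\Gamma_i)}^2$ holds \emph{without} any logarithm (the wirebasket function is a sum of nodal basis functions along $\partial\Gamma_i$ with finite-overlap supports, and each satisfies $\norm{\widetilde\eta_k^{(i)}}{\widetilde H^{1/2}(\omega_k)}^2\simeq\diam(\omega_k)\simeq\norm{\eta_k^{(i)}}{L^2(\omega_k\cap\partial\Gamma_i)}^2$), whereas the reverse direction $\norm{v_i|_{\partial\Gamma_i}}{L^2}^2\lesssim\lvert\log\hmin\rvert\,\norm{v_i}{H^{1/2}(\Gamma_i)}^2$ costs exactly one log. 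You assert a two-sided equivalence "up to a factor $\lvert\log\hmin_i\rvert$" and then spend that log in the upper bound, where the log-free direction is what is needed; as written your chain yields $\blfalt(v,v)\lesssim\lvert\log\hmin\rvert^3 d_1(v,v)$ rather than $\lvert\log\hmin\rvert^2 d_1(v,v)$. Worse, by funnelling the entire function through $\blfalt(v,v)\lesssim\lvert\log\hmin\rvert^2\norm{v}{H^{1/2}(\TT)}^2$ you also attach the continuity constant to the interior part; the paper instead bounds $\blfalt(v^{(0)},v^{(0)})\lesssim\sum_i\norm{\curl_{\Gamma_i}v_{i,0}}{\widetilde\HH_t^{-1/2}(\Gamma_i)}^2=\sum_i\dual{\hyp_i v_{i,0}}{v_{i,0}}_{\Gamma_i}$ \emph{directly and log-free}, using that $\curl_H v^{(0)}$ localizes in $\widetilde\HH_t^{-1/2}$. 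Without this, your route gives $\lvert\log\hmin\rvert^4 d_2$ instead of $\lvert\log\hmin\rvert^2 d_2$ and $\lvert\log\hmin\rvert^2 d_3$ instead of $\lvert\log\hmin\rvert\, d_3$.

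For the lower bound, the claim that $\norm{v_{i,0}}{\widetilde H^{1/2}(\Gamma_i)}\simeq\norm{v_{i,0}}{H^{1/2}(\Gamma_i)}$ "directly" is false: these norms are not equivalent for functions vanishing on $\partial\Gamma_i$, and the point is in any case to bound $\norm{v_{i,0}}{\widetilde H^{1/2}(\Gamma_i)}$ by the norm of the \emph{full} function $v_i$, i.e.\ to prove stability of the splitting $v_i\mapsto v_{i,0}$. This is a genuinely nontrivial result (Lemma~\ref{lem:traceineq:multilevel}, going back to \cite[Lemma~4.5]{dsw94}), and it costs a factor $\lvert\log\hmin_i\rvert$ on the norm, hence $\lvert\log\hmin\rvert^2$ after squaring; combined with the ellipticity constant $\lvert\log\hmin\rvert$ one gets $\dual{\hyp_i v_{i,0}}{v_{i,0}}_{\Gamma_i}\lesssim\lvert\log\hmin\rvert^3\blfalt(v,v)$, not the $\lvert\log\hmin\rvert\,\blfalt(v,v)$ you claim. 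Your wirebasket lower-bound count (one log from the $\eps$-dependent trace inequality with $\eps=\lvert\log\hmin\rvert^{-1}$, one from ellipticity) is correct, but it is the interior term, not the wirebasket term, that dictates the exponent $3$ in the $d_1$ and $d_3$ lower bounds — which is precisely why reweighting it in $d_2$ improves that case to exponent $2$.
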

\noindent
A proof of Lemma~\ref{lem:wbsplitting} is given in Section~\ref{sec:main:proof}.

\subsubsection{Preconditioner for $\Amat$.}
We now consider a preconditioner for the matrix $\Amat$ that corresponds to the bilinear
form $\blfalt(\cdot,\cdot)$ on $X_h\times X_h$.
Having performed the initial decomposition of $X_h$ into wirebasket and interior components,
Lemma~\ref{lem:wbsplitting} and the structure of the bilinear forms $d_j$ defined
by \eqref{eq:defprecform} show that it suffices to provide preconditioners for the
$L^2(\partial\Gamma_i)$ terms and the terms involving the hypersingular integral operator.
We use, respectively, a simple diagonal preconditioner and an arbitrary preconditioner
for the hypersingular integral operator in the conforming case, see,
e.g.,~\cite{amcl03,amt99,swOpPrec,transtep96}.

In the following, let $\Pmat_{\hyp_i}$ denote such a preconditioner for the hypersingular integral operator $\hyp_i$ with
constants $\evmin^{(i)}, \evmax^{(i)}$ such that 
\begin{align}\label{eq:defPrecHyp}
  \evmin^{(i)} \xx^T \Pmat_{\hyp_i} \xx \leq \dual{\hyp_i v_{i,0}}{v_{i,0}}_{\Gamma_i} \leq
  \evmax^{(i)} \xx^T \Pmat_{\hyp_i} \xx
\end{align}
for all $v_{i,0} \in X_{i,0}$ with $v_{i,0} = \sum_{z_j\in\KK_i^0} \xx_j \eta_j^{(i)}$.
Furthermore, let $\Pmat_{\partial\Gamma_i}$ denote a preconditioner with
\begin{align}\label{eq:defPrecL2}
  \mu_\mathrm{min}^{(i)}\yy^T \Pmat_{\partial\Gamma_i} \yy \leq
  \norm{v_{i,1}|_{\partial\Gamma_i}}{L^2(\partial\Gamma_i)}^2 \leq 
  \mu_\mathrm{max}^{(i)}\yy^T \Pmat_{\partial\Gamma_i} \yy
\end{align}
for all $v_{i,1}\in X_{i,1}$ with $v_{i,1} = \sum_{z_j \in \KK_i^1} \yy_j \eta_j^{(i)}$. 
Define the preconditioner $\Pmat^{(i)}\in\R^{K_i\times K_i}$ for the $i$-th subdomain by 
\begin{align*}
  \Pmat^{(i)} := 
  \begin{cases}
    \begin{pmatrix}
      \Pmat_{\partial\Gamma_i} & \\
      & \Pmat_{\hyp_i}
    \end{pmatrix}
    & \text{if } d_1(\cdot,\cdot)\text{ is used}, \\
    \begin{pmatrix}
      \Pmat_{\partial\Gamma_i} & \\
      & \lvert\log(\hmin)\rvert^{-2}\Pmat_{\hyp_i}
    \end{pmatrix}
    & \text{if } d_2(\cdot,\cdot)\text{ is used}, \\
    \begin{pmatrix}
      \lvert\log(\hmin)\rvert
      \Pmat_{\partial\Gamma_i} & \\
      & \Pmat_{\hyp_i}
    \end{pmatrix}
    & \text{if } d_3(\cdot,\cdot)\text{ is used},
  \end{cases}
\end{align*}
and the overall preconditioner $\Pmat_\Amat$ for the matrix $\Amat$, corresponding to the bilinear
form $\blfalt(\cdot,\cdot)$ on $X_h$, by
\begin{align*}
  \Pmat_\Amat :=
  \begin{pmatrix}
    \Pmat^{(1)} & & \\
    & \ddots & \\
    & & \Pmat^{(N)}
  \end{pmatrix}.
\end{align*}
For the last two definitions we have assumed an appropriate order of the degrees of freedom in $X_{h,i}$.
The logarithmic terms in the definition of $\Pmat^{(i)}$ stem from the logarithmic perturbations in the
definition~\eqref{eq:defprecform} of $d_j(\cdot,\cdot)$.
In the remainder of this work we will refer to\\
\centerline{``\emph{Case~$j$}''\quad if $d_j(\cdot,\cdot)$ is used in the definition of $\Pmat_\Amat$ ($j=1,2,3$).}

Our main result concerning the preconditioning of $\Amat$ is as follows.

\begin{theorem}\label{thm:main}
  Set 
  \begin{align*}
    \evmin := \min\{\evmin^{(1)},\mu_\mathrm{min}^{(1)},\dots,\evmin^{(N)},\mu_\mathrm{min}^{(N)}\} \text{ and } 
    \evmax :=  \max\{\evmax^{(1)},\mu_\mathrm{max}^{(1)},\dots,\evmax^{(N)},\mu_\mathrm{max}^{(N)}\}.
  \end{align*}
  Then, there holds for all $\xx\in\R^{K}$
  \begin{align*}
    \begin{cases}
      \lvert\log(\hmin)\rvert^{-3} \evmin \xx^T \Pmat_\Amat \xx \lesssim \xx^T \Amat \xx 
      \lesssim \lvert\log(\hmin)\rvert^2\evmax \xx^T \Pmat_\Amat \xx  & \text{for \emph{Case~1}}, \\
      \lvert\log(\hmin)\rvert^{-2} \evmin \xx^T \Pmat_\Amat \xx \lesssim \xx^T \Amat \xx 
      \lesssim \lvert\log(\hmin)\rvert^2\evmax \xx^T \Pmat_\Amat \xx & \text{for \emph{Case~2}}, \\
     \lvert\log(\hmin)\rvert^{-3} \evmin \xx^T \Pmat_\Amat \xx \lesssim \xx^T \Amat \xx 
     \lesssim \lvert\log(\hmin)\rvert\evmax \xx^T \Pmat_\Amat \xx & \text{for \emph{Case~3}}.
  \end{cases}
  \end{align*}
  Therefore, the condition number of $\widetilde\Amat$ is bounded by
  \begin{align*}
    \kappa(\widetilde\Amat) \lesssim \lvert\log(\hmin)\rvert^{\beta} \frac{\evmax}{\evmin}
  \end{align*}
  with $\beta = 5$ in \emph{Case~1} and $\beta=4$ in \emph{Cases~2,3}.
\end{theorem}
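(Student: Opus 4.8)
The plan is to reduce the statement to a block-diagonal, subdomain-by-subdomain comparison and then invoke the three norm equivalences of Lemma~\ref{lem:wbsplitting} together with the defining bounds \eqref{eq:defPrecHyp} and \eqref{eq:defPrecL2} for the local preconditioners. First I would observe that, by definition, $\xx^T\Amat\xx = \blfalt(v,v)$ where $v\in X_h$ is the function associated with $\xx$. Next, since the wirebasket splitting \eqref{splitting} is applied independently on each $\Gamma_i$ and the forms $d_j$ in \eqref{eq:defprecform} decouple as sums over $i$ and into the $L^2(\partial\Gamma_i)$-part and the $\hyp_i$-part, I would write, for the degrees of freedom of $v_i$ split as $\xx^{(i)} = (\yy^{(i)},\xx^{(i)}_0)$ (wirebasket nodes $\KK_i^1$ and interior nodes $\KK_i^0$, respectively),
\begin{align*}
  d_j(v,v) = \sum_{i=1}^N \Big( w_j^{(i)}\,\norm{v_{i,1}|_{\partial\Gamma_i}}{L^2(\partial\Gamma_i)}^2 + \tilde w_j^{(i)}\,\dual{\hyp_i v_{i,0}}{v_{i,0}}_{\Gamma_i}\Big),
\end{align*}
with the appropriate logarithmic weights $w_j^{(i)},\tilde w_j^{(i)}$ read off from \eqref{eq:defprecform}. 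Applying \eqref{eq:defPrecL2} to the first summand and \eqref{eq:defPrecHyp} to the second, and comparing with the block definition of $\Pmat^{(i)}$ (whose log-weights are chosen precisely to match $w_j^{(i)},\tilde w_j^{(i)}$), gives
\begin{align*}
  \evmin\, (\xx^{(i)})^T \Pmat^{(i)} \xx^{(i)} \;\lesssim\; d_j(v_i,v_i) \;\lesssim\; \evmax\, (\xx^{(i)})^T \Pmat^{(i)} \xx^{(i)},
\end{align*}
and summing over $i$ yields $\evmin\,\xx^T\Pmat_\Amat\xx \lesssim d_j(v,v) \lesssim \evmax\,\xx^T\Pmat_\Amat\xx$. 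Chaining this with the relevant line of Lemma~\ref{lem:wbsplitting} — which relates $\blfalt(v,v) = \xx^T\Amat\xx$ to $d_j(v,v)$ with the stated logarithmic factors — produces exactly the three two-sided bounds claimed in the theorem.

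The condition number bound then follows from standard linear algebra: the stated inequalities say that the eigenvalues of $\widetilde\Amat = \Pmat_\Amat^{-1/2}\Amat\Pmat_\Amat^{-1/2}$ lie in an interval $[c_{\min},c_{\max}]$ with $c_{\min} \gtrsim \lvert\log(\hmin)\rvert^{-a}\evmin$ and $c_{\max}\lesssim \lvert\log(\hmin)\rvert^{b}\evmax$, where $(a,b)=(3,2)$ in Case~1, $(2,2)$ in Case~2, $(3,1)$ in Case~3. Hence $\kappa(\widetilde\Amat) = c_{\max}/c_{\min} \lesssim \lvert\log(\hmin)\rvert^{a+b}\,\evmax/\evmin$, i.e. $\beta = a+b = 5$ in Case~1 and $\beta = 4$ in Cases~2,3.

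I do not expect any serious obstacle here: the theorem is essentially a bookkeeping assembly of Lemma~\ref{lem:wbsplitting} (the genuine mathematical content, proved separately in Section~\ref{sec:main:proof}) with the elementary local spectral equivalences. The one point requiring a little care is verifying that the logarithmic weights in the three branches of $\Pmat^{(i)}$ are consistent with those in $d_1,d_2,d_3$ — e.g. that dividing the $\hyp_i$-block by $\lvert\log(\hmin)\rvert^2$ in Case~2 exactly compensates the $\lvert\log(\hmin)\rvert^{-2}$ factor on the hypersingular term in $d_2$, so that $d_j(v,v)\simeq \evmin$-to-$\evmax$ multiples of $\xx^T\Pmat_\Amat\xx$ with \emph{no} residual logarithmic factor. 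Once that is checked, all the logarithmic powers are carried solely by Lemma~\ref{lem:wbsplitting}, and the arithmetic $a+b\in\{5,4\}$ is immediate.
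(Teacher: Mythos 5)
Your proposal is correct and follows essentially the same route as the paper, whose proof of Theorem~\ref{thm:main} is exactly the one-line observation that the claim "follows directly from Lemma~\ref{lem:wbsplitting} and Assumptions \eqref{eq:defPrecHyp}, \eqref{eq:defPrecL2}"; you have merely written out the bookkeeping (matching of logarithmic weights between $d_j$ and $\Pmat^{(i)}$, summation over subdomains, Rayleigh-quotient argument for $\kappa(\widetilde\Amat)$) that the authors leave implicit.
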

\begin{proof}
  The proof follows directly from Lemma~\ref{lem:wbsplitting} and Assumptions~\eqref{eq:defPrecHyp},
  ~\eqref{eq:defPrecL2} on the preconditioners.
\end{proof}

\subsubsection{Final preconditioner for the full matrix $\sysmat$.}
In order to define the preconditioner $\Pmat$ \eqref{def:pre} for the full matrix $\sysmat$ we
assume that we have a matrix $\Pmat_\Bmat\in\R^{M\times M}$ such that there exist numbers
$\sigma_\mathrm{min},\sigma_\mathrm{max} >0$ with
\begin{align}\label{eq:specteq:Lagrange}
  \sigma_\mathrm{min} \yy^T\Pmat_\Bmat\yy \leq \norm{\psi}{L^2(\gamma)}^2 \leq
  \sigma_\mathrm{max} \yy^T\Pmat_\Bmat\yy
\end{align}
for all $\psi = \sum_{m=1}^M \yy_m \chi_m \in Y_h$.
Below, we will select $\Pmat_\Bmat$ to be diagonal with or without logarithmic scaling.

To provide bounds for the spectrum of $\widetilde\sysmat$ by means of Proposition~\ref{prop:evsysmat}
it remains to bound the singular values $\Sigma_1,\dots,\Sigma_m$ of the matrix $\widetilde\Bmat$.

\begin{lemma}\label{lem:svbounds}
  Let $0<\Sigma_1 \leq \dots \leq\Sigma_m$ denote the nonzero singular values of the matrix $\widetilde\Bmat$
  and let $\evmin,\evmax$ be defined as in Theorem~\ref{thm:main}.
  Then,
  \begin{align*}
    \begin{cases}
      \evmin \sigma_\mathrm{min} \lvert\log(\hmin)\rvert^{-2} \lesssim \Sigma_1^2 \leq \Sigma_m^2 \lesssim
      \evmax  \sigma_\mathrm{max}  &\text{for \emph{Case~1}}, \\
      \evmin \sigma_\mathrm{min} \lvert\log(\hmin)\rvert^{-1} \lesssim \Sigma_1^2 \leq \Sigma_m^2 \lesssim
      \evmax  \sigma_\mathrm{max} &\text{for \emph{Case~2}}, \\
      \evmin \sigma_\mathrm{min} \lvert\log(\hmin)\rvert^{-2} \lesssim \Sigma_1^2 \leq \Sigma_m^2 \lesssim
      \evmax  \sigma_\mathrm{max} \lvert\log(\hmin)\rvert^{-1} &\text{for \emph{Case~3}}.
    \end{cases}
  \end{align*}
\end{lemma}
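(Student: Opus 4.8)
\textbf{Proof plan for Lemma~\ref{lem:svbounds}.}
The strategy is to estimate the singular values of $\widetilde\Bmat$ through the Rayleigh-quotient characterization
\begin{align*}
  \Sigma_1^2 = \min_{\yy} \max_{\xx} \frac{(\yy^T\widetilde\Bmat\xx)^2}{(\xx^T\xx)(\yy^T\yy)},
  \qquad
  \Sigma_m^2 = \max_{\yy} \max_{\xx} \frac{(\yy^T\widetilde\Bmat\xx)^2}{(\xx^T\xx)(\yy^T\yy)},
\end{align*}
where the minimum is taken over $\yy$ in the range of $\widetilde\Bmat$, and to translate everything back into the unscaled quantities. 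Writing $\widetilde\Bmat = \Pmat_\Bmat^{-1/2}\Bmat\Pmat_\Amat^{-1/2}$, we have $\yy^T\widetilde\Bmat\xx = \hat\yy^T\Bmat\hat\xx = b(u_h,\psi_h)$ where $\hat\xx = \Pmat_\Amat^{-1/2}\xx$, $\hat\yy = \Pmat_\Bmat^{-1/2}\yy$ correspond to $u_h\in X_h$, $\psi_h\in Y_h$, and $\xx^T\xx = \hat\xx^T\Pmat_\Amat\hat\xx$, $\yy^T\yy = \hat\yy^T\Pmat_\Bmat\hat\yy$. So the task reduces to bounding
\begin{align*}
  \frac{b(u_h,\psi_h)^2}{(\hat\xx^T\Pmat_\Amat\hat\xx)(\hat\yy^T\Pmat_\Bmat\hat\yy)}
\end{align*}
from above (for all $u_h,\psi_h$) and from below (the inf-sup direction). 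The plan is: (i) replace $\hat\xx^T\Pmat_\Amat\hat\xx$ and $\hat\yy^T\Pmat_\Bmat\hat\yy$ by energy-type norms using the spectral equivalences already at our disposal, (ii) invoke the continuity and discrete inf-sup estimates for $b(\cdot,\cdot)$ from~\cite{mortarBEM}, and (iii) carefully track the case-dependent logarithmic weights.

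For the \emph{upper} bound $\Sigma_m^2$, the main chain is: by \eqref{eq:specteq:Lagrange}, $\hat\yy^T\Pmat_\Bmat\hat\yy \geq \sigma_\mathrm{max}^{-1}\norm{\psi_h}{L^2(\gamma)}^2$; by Theorem~\ref{thm:main}, $\hat\xx^T\Pmat_\Amat\hat\xx \geq \evmax^{-1}\lvert\log(\hmin)\rvert^{-\beta'} a(u_h,u_h)$ with the appropriate exponent $\beta'$ per case; by the ellipticity \eqref{eq:blfalt:ell}, $a(u_h,u_h)\gtrsim\lvert\log(\hmin)\rvert^{-1}\norm{u_h}{H^{1/2}(\TT)}^2$; and finally by continuity of $b(\cdot,\cdot)$ (\cite[Lemma~3.14]{mortarBEM}), $b(u_h,\psi_h)\lesssim\lvert\log(\hmin)\rvert^{1/2}\norm{u_h}{H^{1/2}(\TT)}\norm{\psi_h}{L^2(\gamma)}$. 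Multiplying these out must yield exactly $\Sigma_m^2\lesssim\evmax\sigma_\mathrm{max}$ in Cases~1,2 and an extra $\lvert\log(\hmin)\rvert^{-1}$ in Case~3. The bookkeeping here will be the place where the exponents have to be reconciled: the $\lvert\log(\hmin)\rvert$-losses from the $\Pmat_\Amat$-equivalence, the $a$-ellipticity, and $b$-continuity have to cancel against the $\lvert\log(\hmin)\rvert$-\emph{gains} hidden in the explicit weights of $d_2,d_3$; I expect that replacing $\norm{u_h}{H^{1/2}(\TT)}$ by $a(u_h,u_h)^{1/2}$ and $b$-continuity in the logarithmically-weighted form is the cleaner route, avoiding the crude use of \eqref{eq:blfalt:ell} where possible, so that Case~3 picks up the advertised improvement.

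For the \emph{lower} bound on $\Sigma_1$ the key ingredient is the discrete inf-sup condition for $b(\cdot,\cdot)$: for every $\psi_h\in Y_h$ there is a $u_h\in X_h$ with $b(u_h,\psi_h)\gtrsim\lvert\log(\hmin)\rvert^{-1/2}\norm{\psi_h}{L^2(\gamma)}\norm{u_h}{H^{1/2}(\TT)}$ (this is essentially the stability proved in~\cite{mortarBEM}, which underlies Theorem~\ref{thm:mortar}). Given $\yy$ in the range of $\widetilde\Bmat$, pick the corresponding $\psi_h$, take this optimal $u_h$, and estimate the Rayleigh quotient from below using: $\norm{u_h}{H^{1/2}(\TT)}^2\gtrsim\lvert\log(\hmin)\rvert^{-?}\hat\xx^T\Pmat_\Amat\hat\xx$ (upper spectral bound of $\Pmat_\Amat$ via Theorem~\ref{thm:main} and the upper bound in \eqref{eq:blfalt:ell}, plus $\evmin$), and $\norm{\psi_h}{L^2(\gamma)}^2\gtrsim\sigma_\mathrm{min}\,\hat\yy^T\Pmat_\Bmat\hat\yy$. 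Again the logarithmic powers must be assembled case by case to produce the exponents $-2,-1,-2$ stated. The chief obstacle, and the only genuinely nontrivial point, is making sure the inf-sup chain uses the \emph{sharpest} available logarithmic estimates — in particular using the $a$-based control of the energy in place of the $H^{1/2}(\TT)$-norm whenever that saves a power of $\lvert\log(\hmin)\rvert$ — so that the announced (optimized) exponents are actually attained rather than a weaker bound; everything else is a routine combination of already-cited continuity/coercivity results with the scaling relations $\widetilde\Bmat=\Pmat_\Bmat^{-1/2}\Bmat\Pmat_\Amat^{-1/2}$.
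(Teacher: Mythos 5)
Your general framework — the Rayleigh-quotient characterization of the extremal singular values of $\widetilde\Bmat=\Pmat_\Bmat^{-1/2}\Bmat\Pmat_\Amat^{-1/2}$ for the upper bound and the discrete inf-sup condition for the lower bound — matches the paper's, but your route for the upper bound has a genuine gap that cannot be fixed by ``reconciling exponents''. If you chain $b(v,\psi)\lesssim\lvert\log(\hmin)\rvert^{1/2}\norm{v}{H^{1/2}(\TT)}\norm{\psi}{L^2(\gamma)}$ with the ellipticity $\norm{v}{H^{1/2}(\TT)}^2\lesssim\lvert\log(\hmin)\rvert\,a(v,v)$ and the spectral bound $a(v,v)\lesssim\lvert\log(\hmin)\rvert^{2}\evmax\,\yy^T\Pmat_\Amat\yy$, you end up with $\Sigma_m^2\lesssim\evmax\sigma_\mathrm{max}\lvert\log(\hmin)\rvert^{4}$ in Case~1 — four logarithmic powers worse than claimed — and no ``sharper weighted form of $b$-continuity'' rescues this chain. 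The paper's upper bound bypasses the $H^{1/2}(\TT)$-norm and the form $a(\cdot,\cdot)$ altogether: since the jump $[v]|_{\gamma_\ell}$ involves only the traces $v_i|_{\partial\Gamma_i}=v_{i,1}|_{\partial\Gamma_i}$ (the interior parts $v_{i,0}$ vanish on $\partial\Gamma_i$), a single Cauchy--Schwarz inequality on $\gamma$ gives $b(v,\psi)^2\le 2\norm{\psi}{L^2(\gamma)}^2\sum_{i}\norm{v_{i,1}|_{\partial\Gamma_i}}{L^2(\partial\Gamma_i)}^2$, and the sum on the right is exactly the boundary part of $d_1$ and $d_2$ (hence bounded by $d_j(v,v)$ with no logarithmic loss) and equals $\lvert\log(\hmin)\rvert^{-1}$ times the boundary part of $d_3$ (hence the extra gain in Case~3). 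This structural observation — $b(\cdot,\psi)$ only sees the wirebasket component, which the preconditioner already controls in $L^2(\partial\Gamma_i)$ — is the missing idea, and it is precisely what produces the log-free bounds $\Sigma_m^2\lesssim\evmax\sigma_\mathrm{max}$ in Cases~1 and~2.

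For the lower bound your plan coincides in substance with the paper's (combine $\evmin\yy^T\Pmat_\Amat\yy\lesssim d_j(v,v)\lesssim\lvert\log(\hmin)\rvert^{2}\norm{v}{H^{1/2}(\TT)}^2$, taken from the proof of Lemma~\ref{lem:wbsplitting}, with the discrete inf-sup condition and \eqref{eq:specteq:Lagrange}), except that you state the inf-sup condition with a spurious $\lvert\log(\hmin)\rvert^{-1/2}$ factor. The condition actually invoked, \cite[Lemma~3.12]{mortarBEM}, reads $\sup_{0\neq v\in X_h} b(v,\psi)/\norm{v}{H^{1/2}(\TT)}\ge\beta\norm{\psi}{L^2(\gamma)}$ with $\beta$ independent of $h$; with your weakened version you would only obtain $\Sigma_1^2\gtrsim\evmin\sigma_\mathrm{min}\lvert\log(\hmin)\rvert^{-3}$ in Cases~1 and~3, one logarithmic power short of the statement.
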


A proof of Lemma~\ref{lem:svbounds} will be given in Section~\ref{sec:main:proof}.

Now, let $\Mmat\in\R^{M\times M}$ denote the $L^2(\gamma)$ mass matrix, i.e.,
\begin{align*}
  \Mmat_{jk} := \dual{\chi_j}{\chi_k}_\gamma \quad\text{for } j,k=1,\dots,M.
\end{align*}
Obviously, $\Mmat$ is diagonal and
\begin{align*}
  \norm\psi{L^2(\gamma)}^2 = \yy^T\Mmat\yy \quad\text{for all } \psi = \sum_{m=1}^M \yy_m \chi_m \in Y_h.
\end{align*}
The main result of our paper is the next theorem. Its proof is immediate by combining
the previously established estimates, namely Theorem~\ref{thm:main} and Lemma~\ref{lem:svbounds},
together with the general results provided by Propositions~\ref{prop:minres} and~\ref{prop:evsysmat}.

\begin{theorem}\label{thm:cond}
  Let $\evmin,\evmax$ be defined as in Theorem~\ref{thm:main} and let $\sigma_\mathrm{min}, \sigma_\mathrm{max}>0$
  be the numbers from \eqref{eq:specteq:Lagrange}.
  Then the spectrum of the preconditioned matrix has a superset like
  $\spec(\widetilde\sysmat) \subseteq [-a,-b] \cup [c,d]$ with numbers $a,b,c,d>0$ that satisfy the following
  estimates.
  \begin{itemize}
    \item \emph{Case~1}: If $\Pmat_\Bmat = \lvert\log(\hmin)\rvert^{-1} \Mmat$,
      then $\sigma_\mathrm{min} = \sigma_\mathrm{max} = \lvert\log(\hmin)\rvert$ and
      \begin{align*}
        \evmin/\max\{\evmax,\evmax^{1/2}\} \lvert\log(\hmin)\rvert^{-3} \lesssim b &\leq a \lesssim
        \evmax^{1/2} \lvert\log(\hmin)\rvert^{1/2}, \\
        \evmin \lvert\log(\hmin)\rvert^{-3} \lesssim c &\leq d  \lesssim \max\{\evmax,\evmax^{1/2}\}\lvert\log(\hmin)\rvert^{2}.
      \end{align*}
    \item \emph{Case~2}: If $\Pmat_\Bmat = \lvert\log(\hmin)\rvert^{-1} \Mmat$,
      then $\sigma_\mathrm{min} = \sigma_\mathrm{max} = \lvert\log(\hmin)\rvert$ and
      \begin{align*}
        \evmin/\max\{\evmax,\evmax^{1/2}\} \lvert\log(\hmin)\rvert^{-2} \lesssim b &\leq a \lesssim
        \evmax^{1/2} \lvert\log(\hmin)\rvert^{1/2}, \\
        \evmin \lvert\log(\hmin)\rvert^{-2} \lesssim c &\leq d  \lesssim \max\{\evmax,\evmax^{1/2}\}\lvert\log(\hmin)\rvert^{2}.
      \end{align*}
    \item \emph{Case~3}: If $\Pmat_\Bmat = \Mmat$, then $\sigma_\mathrm{min} = \sigma_\mathrm{max} = 1$ and
      \begin{align*}
        \evmin/\max\{\evmax,\evmax^{1/2}\} \lvert\log(\hmin)\rvert^{-3}\lesssim b &\leq a 
        \lesssim (\evmax)^{1/2} \lvert\log(\hmin)\rvert^{-1/2}, \\
        \evmin \lvert\log(\hmin)\rvert^{-3} \lesssim c &\leq d  \lesssim \max\{\evmax,\evmax^{1/2}\}\lvert\log(\hmin)\rvert.
      \end{align*}
  \end{itemize}
  Therefore, the condition number of $\widetilde\sysmat$ is bounded by
  \begin{align*}
    \kappa(\widetilde\sysmat) \lesssim
    \lvert\log(\hmin)\rvert^{\beta} \max\{\evmax^{1/2}, \evmax^2\} / \evmin
  \end{align*}
  with $\beta = 5$ in \emph{Case~1} and $\beta=4$ in \emph{Cases~2,3}.
  Furthermore, the number of preconditioned MINRES iterations, required to reduce the relative residual to
  a certain threshold, is bounded like the condition number in the respective case.
\end{theorem}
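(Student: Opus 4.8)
The plan is to assemble Theorem~\ref{thm:cond} purely from the building blocks already proved, with no new analysis: the spectral estimate for $\widetilde\Amat$ from Theorem~\ref{thm:main}, the singular-value bounds for $\widetilde\Bmat$ from Lemma~\ref{lem:svbounds}, and the abstract saddle-point spectral localization in Proposition~\ref{prop:evsysmat}, together with the MINRES convergence estimate in Proposition~\ref{prop:minres}. First I would fix a case $j\in\{1,2,3\}$ and the stated choice of $\Pmat_\Bmat$ (either $|\!\log(\hmin)|^{-1}\Mmat$ or $\Mmat$); since $\Mmat$ is the diagonal $L^2(\gamma)$ mass matrix, \eqref{eq:specteq:Lagrange} holds with $\sigma_\mathrm{min}=\sigma_\mathrm{max}=|\!\log(\hmin)|$ in Cases~1,2 and $\sigma_\mathrm{min}=\sigma_\mathrm{max}=1$ in Case~3, as claimed.

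Next I would feed the quantities $\Evmin,\Evmax$ and $\Sigma_1,\Sigma_m$ into Proposition~\ref{prop:evsysmat}. From Theorem~\ref{thm:main} we have, up to the stated logarithmic powers, $\Evmin\gtrsim |\!\log(\hmin)|^{-\alpha_j}\evmin$ and $\Evmax\lesssim|\!\log(\hmin)|^{\gamma_j}\evmax$, and from Lemma~\ref{lem:svbounds} the matching two-sided bounds on $\Sigma_1^2,\Sigma_m^2$ in terms of $\evmin\sigma_\mathrm{min}$, $\evmax\sigma_\mathrm{max}$ and logarithmic factors. The negative part of the spectrum lies in $[-a,-b]$ with $a\le\tfrac12(\Sigma_m^2+\Evmin^2)^{1/2}+\tfrac12\Evmin$-type expressions; reading off the dominant terms gives $a\lesssim\Sigma_m\lesssim(\evmax\sigma_\mathrm{max})^{1/2}$ and, for the lower bound $b$, the estimate $b\gtrsim\Sigma_1^2/\Evmax$ (the standard consequence of $\tfrac12(\sqrt{\Evmax^2+4\Sigma_1^2}-\Evmax)\simeq\Sigma_1^2/\Evmax$ when $\Sigma_1\lesssim\Evmax$), which yields $b\gtrsim\evmin\sigma_\mathrm{min}|\!\log(\hmin)|^{-\cdot}/(\evmax|\!\log(\hmin)|^{\cdot})$; inserting $\sigma_\mathrm{min}=\sigma_\mathrm{max}$ from the chosen $\Pmat_\Bmat$ collapses these to the displayed bounds, with the $\max\{\evmax,\evmax^{1/2}\}$ arising because $\evmax$ may be larger or smaller than $1$. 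The positive part $[c,d]$ is handled the same way: $c\gtrsim\Evmin\gtrsim\evmin|\!\log(\hmin)|^{-\cdot}$ and $d\lesssim\tfrac12(\Evmax+\sqrt{\Evmax^2+4\Sigma_m^2})\lesssim\Evmax+\Sigma_m\lesssim\max\{\evmax,\evmax^{1/2}\}|\!\log(\hmin)|^{\cdot}$, matching the claimed exponents in each case.

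Finally, $\kappa(\widetilde\sysmat)=\max\{a,d\}/\min\{b,c\}$ by definition, so I would just divide the largest numerator bound by the smallest denominator bound in each case. Tracking the logarithmic exponents gives $\beta=5$ in Case~1 and $\beta=4$ in Cases~2,3, and the $\evmax$/$\evmin$ dependence combines the $\evmax^{1/2}$ from the singular values with the $\evmax$ from $\Evmax$ into $\max\{\evmax^{1/2},\evmax^2\}/\evmin$ (the square appearing because $\evmax$ enters both $a$ through $\Sigma_m$ and the reciprocal in $b$). The MINRES iteration-count statement is then immediate from Proposition~\ref{prop:minres}, since the iteration bound is $O(\kappa(\widetilde\sysmat))$ and $\spec(\widetilde\sysmat)=\spec(\Pmat^{-1}\sysmat)$.

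There is essentially no genuine obstacle here—the theorem is a bookkeeping corollary—so the only thing requiring care is the algebra of logarithmic exponents and the case distinctions on whether $\evmax\gtrless1$; I would present that as a short table-like verification rather than belabour it, and would be careful to state the localization as a \emph{superset} $[-a,-b]\cup[c,d]$ exactly as Proposition~\ref{prop:evsysmat} delivers, not claim sharpness of $a,b,c,d$.
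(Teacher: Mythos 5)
Your proposal is correct and follows exactly the route the paper takes: the paper itself states that the proof of Theorem~\ref{thm:cond} is immediate from combining Theorem~\ref{thm:main}, Lemma~\ref{lem:svbounds}, and Propositions~\ref{prop:minres} and~\ref{prop:evsysmat}, which is precisely the bookkeeping you carry out. Your explicit reading-off of $a\lesssim\Sigma_m$, $b\gtrsim\Sigma_1^2/\max\{\Evmax,\Sigma_1\}$, $c\gtrsim\Evmin$, $d\lesssim\Evmax+\Sigma_m$ and the subsequent tracking of logarithmic exponents and the $\evmax\gtrless 1$ dichotomy reproduces the stated bounds, so nothing is missing.
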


\subsection{Proofs and technical details}\label{sec:main:proof}
For the proof of Lemma~\ref{lem:wbsplitting} we need a trace inequality and an inverse estimate, which are given in the
following two lemmas.
\begin{lemma}[{\cite[Lemma~4.3]{ghh09}}]\label{lem:traceineq}
  Let $R\subset \R^2$ be a bounded Lipschitz domain. There exists a constant $C>0$ such that for all $\eps \in (0,1/2)$
  holds
  \begin{align*}
    \norm{v}{L^2(\partial R)} \leq C \eps^{-1/2} \norm{v}{H^{1/2+\eps}(R)} \quad\text{for all } v\in H^{1/2+\eps}(R).
  \end{align*}
\end{lemma}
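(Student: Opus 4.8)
The plan is to reduce the statement to a trace estimate on a half-space by a standard localization, and then to extract the singular factor $\eps^{-1/2}$ from an explicit Fourier computation; the crux is to arrange the localization so that it contributes only an $\eps$-independent constant. \emph{Step 1 (localization and extension).} Since $R$ is a bounded Lipschitz domain, $\partial R$ is compact and covered by finitely many open sets $U_j$ in each of which, after a rotation, $\partial R$ is the graph of a Lipschitz function with $R\cap U_j$ lying on one side. Take a smooth partition of unity $\{\phi_j\}$ subordinate to $\{U_j\}$ (plus one interior piece not reaching $\partial R$), so $\norm{v}{L^2(\partial R)}\le\sum_j\norm{\phi_j v}{L^2(\partial R)}$; multiplication by a fixed $\phi_j\in C^\infty$ is bounded on $H^{1/2+\eps}$ with constant depending only on $\norm{\phi_j}{W^{1,\infty}}$. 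In each chart a bi-Lipschitz change of variables flattens the graph to a segment of $\{x_2=0\}$ and maps $R\cap U_j$ into the half-plane $\R^2_+:=\R\times(0,\infty)$; following this by a Stein-type extension $E\colon H^{1/2+\eps}(\R^2_+)\to H^{1/2+\eps}(\R^2)$ that preserves traces, we reduce to proving
\[
  \norm{g(\cdot,0)}{L^2(\R)}\ \lesssim\ \eps^{-1/2}\,\norm{g}{H^{1/2+\eps}(\R^2)}\qquad\text{for all }g\in H^{1/2+\eps}(\R^2).
\]

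\emph{Step 2 (Fourier computation).} Writing $\hat g(\xi_1,\xi_2)$ for the Fourier transform, the one-dimensional transform of the trace is $\widehat{g(\cdot,0)}(\xi_1)=c\int_\R\hat g(\xi_1,\xi_2)\,d\xi_2$, so with $s=1/2+\eps$ Cauchy--Schwarz gives
\[
  \bigl|\widehat{g(\cdot,0)}(\xi_1)\bigr|^2\ \le\ c^2\Bigl(\int_\R(1+|\xi|^2)^{-s}\,d\xi_2\Bigr)\Bigl(\int_\R|\hat g(\xi)|^2(1+|\xi|^2)^{s}\,d\xi_2\Bigr).
\]
Substituting $\xi_2=(1+\xi_1^2)^{1/2}t$, the first factor equals $(1+\xi_1^2)^{1/2-s}\int_\R(1+t^2)^{-s}\,dt=(1+\xi_1^2)^{-\eps}\int_\R(1+t^2)^{-1/2-\eps}\,dt$. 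Here $(1+\xi_1^2)^{-\eps}\le1$, while $\int_\R(1+t^2)^{-1/2-\eps}\,dt\le C\eps^{-1}$ because the tail behaves like $\int_1^\infty t^{-1-2\eps}\,dt=\tfrac1{2\eps}$. Integrating in $\xi_1$ and invoking Plancherel yields $\norm{g(\cdot,0)}{L^2(\R)}^2\lesssim\eps^{-1}\norm{g}{H^{1/2+\eps}(\R^2)}^2$; taking square roots and chaining with Step~1 proves the lemma.

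\emph{Main obstacle.} The analytically substantive point is Step~2, where the sharp asymptotics $\int_\R(1+t^2)^{-1/2-\eps}\,dt\simeq\eps^{-1}$ produce exactly the factor $\eps^{-1/2}$ rather than a worse power; everything else is soft. The genuinely delicate bookkeeping, however, is to check that the multiplication, bi-Lipschitz flattening, and extension in Step~1 all contribute constants that stay bounded both as $\eps\to0$ and as $\eps\to1/2$. This is true because the order $1/2+\eps$ lies in a fixed compact subset of $(0,1]$, on which real interpolation between $L^2$ and $H^1$ (the two norms genuinely preserved by bi-Lipschitz maps) is well behaved and the Stein extension is bounded with locally uniform constants; at the endpoint $s=1$ no interpolation is needed, and near $\eps=0$ the order is bounded away from $0$, so no further $\eps$-dependence enters. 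Hence the constant in the estimate has the asserted form $C\eps^{-1/2}$ with $C$ independent of $\eps\in(0,1/2)$.
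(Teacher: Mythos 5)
The paper does not prove this lemma at all; it is quoted verbatim from \cite[Lemma~4.3]{ghh09}, so there is no internal argument to compare against. Your proof is the standard one for this type of $\eps$-explicit trace estimate, and its core, Step~2, is correct: the Cauchy--Schwarz trick in the $\xi_2$-integral together with $\int_\R(1+t^2)^{-1/2-\eps}\,dt\simeq\eps^{-1}$ is exactly where the factor $\eps^{-1/2}$ comes from, and the reduction to the half-space by charts, cutoffs and reflection is the expected route.

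One point in Step~1 is not right as stated, although it is harmless and fixable. The paper's $H^{1/2+\eps}(R)$ norm is the Sobolev--Slobodeckij norm, and multiplication by a fixed Lipschitz cutoff $\phi_j$ is \emph{not} bounded on that norm uniformly as $\eps\to1/2$: the commutator term is controlled by $\|\nabla\phi_j\|_\infty^2\,\|v\|_{L^2}^2\int_{|z|\le\diam R}|z|^{-1-2\eps}\,dz\simeq(1-2\eps)^{-1}\|v\|_{L^2}^2$, so the operator norm behaves like $(1/2-\eps)^{-1/2}$, which is not $O(\eps^{-1/2})$ near $\eps=1/2$. Your appeal to interpolation between $L^2$ and $H^1$ does not repair this, because the equivalence of the real interpolation norm with the Slobodeckij norm itself degenerates as the order tends to $1$ (both scales differ from the Bessel-potential norm by a factor $\simeq(1-s)^{-1/2}$ on the seminorm part, but not on the low-frequency part), so ``the order lies in a compact subset of $(0,1]$'' is not enough. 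The easy fix is to run your argument only for, say, $\eps\in(0,1/4]$, where $s=1/2+\eps$ stays in a compact subset of $(1/2,1)$ and every localization constant is genuinely uniform; for $\eps\in[1/4,1/2)$ the claim reduces, via $\seminorm{v}{H^{3/4}(R)}\le\max\{1,\diam(R)\}^{2\eps-1/2}\seminorm{v}{H^{1/2+\eps}(R)}$, to the single fixed-index trace estimate $\norm{v}{L^2(\partial R)}\lesssim\norm{v}{H^{3/4}(R)}$, and there $\eps^{-1/2}$ is anyway bounded. With that amendment the proof is complete.
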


\begin{lemma}[{\cite[Lemma~4]{HS2001}}]\label{lem:traceineq:multilevel}
  For a function $v_i\in X_{i,h}$ with splitting \eqref{splitting}, $v_i = v_{i,0} + v_{i,1}$, there holds
  \begin{align*}
    \norm{v_{i,0}}{\widetilde H^{1/2}(\Gamma_i)} \lesssim \lvert\log(\hmin_i)\rvert \norm{v_i}{H^{1/2}(\Gamma_i)},
    \quad i=1,\ldots,N.
  \end{align*}
\end{lemma}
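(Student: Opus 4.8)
The plan is to obtain the estimate from two intermediate bounds, each contributing one factor $|\log\hmin_i|^{1/2}$: (i) replacing $v_i$ by its interior part $v_{i,0}$ is stable in the \emph{unweighted} norm $\norm{\cdot}{H^{1/2}(\Gamma_i)}$ up to a factor $|\log\hmin_i|^{1/2}$; and (ii) for a discrete function vanishing on $\partial\Gamma_i$ the stronger norm $\norm{\cdot}{\widetilde H^{1/2}(\Gamma_i)}$ is dominated by $\norm{\cdot}{H^{1/2}(\Gamma_i)}$, again up to a factor $|\log\hmin_i|^{1/2}$. Throughout I fix $i$, abbreviate $h:=\hmin_i$, treat $\Gamma_i$ as one of the finitely many fixed Lipschitz domains of the coarse mesh $\TT$ (so the interpolation and trace constants attached to $\Gamma_i$ are admissible), and assume, as I may, that $|\log h|\ge 1$.

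For (i) I write $v_{i,0}=v_i-v_{i,1}$, so it suffices to bound $\norm{v_{i,1}}{H^{1/2}(\Gamma_i)}$. By construction $v_{i,1}$ is piecewise (bi)linear, carries the nodal values of $v_i$ on $\KK_i^1$, and vanishes on $\KK_i^0$; hence its support lies in the union of the elements of $\TT_i$ meeting $\partial\Gamma_i$, a strip of width $\simeq h$. Scaling of the nodal mass and stiffness matrices, together with quasi-uniformity of $\TT_i$, gives
\[
  \norm{v_{i,1}}{L^2(\Gamma_i)}^2 \simeq h\,\norm{v_i|_{\partial\Gamma_i}}{L^2(\partial\Gamma_i)}^2
  \quad\text{and}\quad
  \norm{v_{i,1}}{H^1(\Gamma_i)}^2 \lesssim h^{-1}\,\norm{v_i|_{\partial\Gamma_i}}{L^2(\partial\Gamma_i)}^2 .
\]
The interpolation inequality $\norm{w}{H^{1/2}(\Gamma_i)}^2\lesssim\norm{w}{L^2(\Gamma_i)}\,\norm{w}{H^1(\Gamma_i)}$ then yields $\norm{v_{i,1}}{H^{1/2}(\Gamma_i)}^2\lesssim\norm{v_i|_{\partial\Gamma_i}}{L^2(\partial\Gamma_i)}^2$. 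Finally I combine the trace inequality of Lemma~\ref{lem:traceineq} on $\Gamma_i$ with the inverse estimate $\norm{v_i}{H^{1/2+\eps}(\Gamma_i)}\lesssim h^{-\eps}\norm{v_i}{H^{1/2}(\Gamma_i)}$, valid for the discrete function $v_i$ on the quasi-uniform mesh $\TT_i$ and already exploited in the ellipticity proof above, and choose $\eps=|\log h|^{-1}$, so that $h^{-\eps}=e$ and $\eps^{-1/2}=|\log h|^{1/2}$. This gives $\norm{v_i|_{\partial\Gamma_i}}{L^2(\partial\Gamma_i)}\lesssim|\log h|^{1/2}\norm{v_i}{H^{1/2}(\Gamma_i)}$, hence $\norm{v_{i,0}}{H^{1/2}(\Gamma_i)}\lesssim|\log h|^{1/2}\norm{v_i}{H^{1/2}(\Gamma_i)}$.

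For (ii) set $w:=v_{i,0}\in X_{i,0}$, which vanishes on $\partial\Gamma_i$. By the definition of $\norm{\cdot}{\widetilde H^{1/2}(\Gamma_i)}$ only the weighted term $\int_{\Gamma_i}|w(x)|^2\,\dist(x,\partial\Gamma_i)^{-1}\,dx$ needs attention, the seminorm part being already $\le\norm{w}{H^{1/2}(\Gamma_i)}^2$. Here I use the critical fractional Hardy inequality on the Lipschitz domain $\Gamma_i$: for $\tfrac12<s<1$ and $w\in H^{s}(\Gamma_i)$ with $w|_{\partial\Gamma_i}=0$,
\[
  \int_{\Gamma_i}\frac{|w(x)|^2}{\dist(x,\partial\Gamma_i)^{2s}}\,dx \;\lesssim\; \frac{1}{s-\tfrac12}\,\norm{w}{H^{s}(\Gamma_i)}^2 ,
\]
the constant blowing up like $(s-\tfrac12)^{-1}$ as $s\downarrow\tfrac12$ (the endpoint $s=\tfrac12$ genuinely fails — this is where the logarithm comes from). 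Taking $s=\tfrac12+\eps$ and using $\dist(x,\partial\Gamma_i)^{-1}\le\diam(\Gamma_i)^{2\eps}\,\dist(x,\partial\Gamma_i)^{-1-2\eps}\lesssim\dist(x,\partial\Gamma_i)^{-1-2\eps}$, followed once more by the inverse estimate, I obtain $\norm{w}{\widetilde H^{1/2}(\Gamma_i)}^2\lesssim\eps^{-1}h^{-2\eps}\norm{w}{H^{1/2}(\Gamma_i)}^2$; the choice $\eps=|\log h|^{-1}$ turns $h^{-2\eps}$ into a constant and leaves $\norm{w}{\widetilde H^{1/2}(\Gamma_i)}\lesssim|\log h|^{1/2}\norm{w}{H^{1/2}(\Gamma_i)}$. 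Combining with (i),
\[
  \norm{v_{i,0}}{\widetilde H^{1/2}(\Gamma_i)} \;\lesssim\; |\log h|^{1/2}\,\norm{v_{i,0}}{H^{1/2}(\Gamma_i)} \;\lesssim\; |\log h|\,\norm{v_i}{H^{1/2}(\Gamma_i)},
\]
which is the assertion, uniformly over $i=1,\dots,N$ since $N$ is fixed.

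The strip estimates for $v_{i,1}$ and the bookkeeping with $\eps=|\log h|^{-1}$ are routine. The delicate ingredient is step (ii): one must quantify how the embedding of $\{w\in H^{1/2+\eps}(\Gamma_i): w|_{\partial\Gamma_i}=0\}$ into the $\dist^{-1}$-weighted $L^2$-space degenerates as $\eps\to 0$ — equivalently, control the $(s-\tfrac12)^{-1}$ blow-up of the fractional Hardy constant — since the logarithm in the final bound is exactly this degeneration played off against the inverse-estimate factor $h^{-\eps}$. Using only the crude embedding $H^1_0(\Gamma_i)\hookrightarrow\widetilde H^{1/2}(\Gamma_i)$ instead would force the inverse estimate to be applied from $H^1$, costing the unacceptable power $h^{-1/2}$; working one Sobolev index above $1/2$ is precisely what keeps the loss logarithmic. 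A slowly varying ``logarithmic collar'' test function shows that the exponent $1$ obtained here is sharp up to a factor $|\log h|^{1/2}$, in line with the paper's remark that some of the logarithmic exponents are presumably not optimal.
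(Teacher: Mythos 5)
First, note that the paper does not prove this lemma at all: it is quoted from \cite[Lemma~4]{HS2001}, whose proof rests on \cite[Lemma~4.5]{dsw94}, with an alternative multilevel-norm proof in \cite[Theorem~3.6]{HJM12}. So you are proposing a genuinely new route. Your step (i) is sound and is essentially the same trace-plus-inverse-estimate device the paper itself uses (Lemma~\ref{lem:traceineq} with $\eps=\lvert\log\hmin\rvert^{-1}$) in the proof of Lemma~\ref{lem:wbsplitting}. The problem is step (ii), whose entire content is the asserted ``critical fractional Hardy inequality'' with constant $(s-\tfrac12)^{-1}$. You give no proof of this constant, and it is in fact false: the sharp blow-up is $(s-\tfrac12)^{-2}$. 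Take $s=\tfrac12+\eps$ and the boundary-collar function $w(x)=\chi(\dist(x,\partial\Gamma_i))$ with $\chi=0$ on $(0,a)$, $\chi(t)=\log(t/a)/\log(b/a)$ on $(a,b)$, $\chi=1$ for $t\ge b$, choosing $\log(b/a)=\eps^{-1}$ and $b^{-2\eps}=\eps^{-1}$. A direct computation (substituting $y=tx$ in the Gagliardo integral of the ramp) gives $\seminorm{w}{H^{1/2+\eps}(\Gamma_i)}^2\lesssim \log(b/a)^{-2}\,\eps^{-1}\,a^{-2\eps}=O(1)$ and $\norm{w}{L^2(\Gamma_i)}^2=O(1)$, while $\int_{\Gamma_i}|w|^2\dist(x,\partial\Gamma_i)^{-2s}\,dx\gtrsim (b^{-2\eps}-1)/(2\eps)\simeq\eps^{-2}$. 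Ironically, this is exactly the ``slowly varying logarithmic collar'' you invoke at the end as evidence of sharpness; it refutes your $(s-\tfrac12)^{-1}$ claim rather than supporting the argument.

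With the correct rate $(s-\tfrac12)^{-2}$, your step (ii) only yields $\norm{w}{\widetilde H^{1/2}(\Gamma_i)}\lesssim\lvert\log\hmin_i\rvert\,\norm{w}{H^{1/2}(\Gamma_i)}$ for functions vanishing on $\partial\Gamma_i$ — which is no more than the cited lemma itself specialized to $v_i=v_{i,0}$, and (by interpolating the collar onto the mesh) cannot be improved for discrete functions either. Composing this with the factor $\lvert\log\hmin_i\rvert^{1/2}$ from step (i) gives only $\norm{v_{i,0}}{\widetilde H^{1/2}(\Gamma_i)}\lesssim\lvert\log\hmin_i\rvert^{3/2}\norm{v_i}{H^{1/2}(\Gamma_i)}$, which falls short of the stated bound. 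This is not a fixable bookkeeping issue but a structural one: splitting the proof into ``subtract the wirebasket part in $H^{1/2}$'' and then ``pay for the zero extension'' multiplies two worst cases that are not attained simultaneously, whereas the proofs behind the lemma (\cite[Lemma~4.5]{dsw94} as used in \cite{HS2001}, or the multilevel-norm argument of \cite{HJM12}) estimate the discrete splitting jointly and thereby save the extra half power of the logarithm. As it stands, the proposal therefore has a genuine gap and does not establish the lemma.
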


The proof of~\cite[Lemma~4]{HS2001} uses~\cite[Lemma~4.5]{dsw94}.
An alternative proof of Lemma~\ref{lem:traceineq:multilevel} which utilizes multilevel norms is given
in~\cite[Theorem~3.6]{HJM12}.

\begin{proof}[Proof of Lemma~\ref{lem:wbsplitting}]
  We start with a proof of the upper bound. 
  Let $v = v^{(0)} + v^{(1)} \in X_h$ with $v_i^{(0)} := v_{i,0}$ and $v_i^{(1)} := v_{i,1}$.
  Application of the triangle inequality, boundedness~\eqref{eq:blfalt:ell} of the bilinear form $\blfalt(\cdot,\cdot)$, and equivalence
  $\dual{\slp\cdot}{\cdot}_\Gamma \simeq \norm{\cdot}{\widetilde \HH_t^{-1/2}(\Gamma)}^2$ together with the estimate 
  $\norm{\cdot}{\widetilde \HH_t^{-1/2}(\Gamma)} \lesssim \norm{\cdot}{\widetilde \HH_t^{-1/2}(\TT)}$ for fractional-order
  Sobolev spaces, leads to
  \begin{align*}
    \blfalt(v,v) &\lesssim \blfalt(v^{(0)},v^{(0)}) + \blfalt(v^{(1)},v^{(1)}) 
    \lesssim  \blfalt(v^{(0)},v^{(0)}) + \lvert\log(\hmin)\rvert^2 \norm{v^{(1)}}{H^{1/2}(\TT)}^2 \\
    &\lesssim \sum_{i=1}^N \norm{\curl_{\Gamma_i} v_{i,0}}{\widetilde\HH_t^{-1/2}(\Gamma_i)}^2 + \lvert\log(\hmin)\rvert^2
    \sum_{i=1}^N \norm{v_{i,1}}{H^{1/2}(\Gamma_i)}^2
  \end{align*}
  Then, $\dual{\slp_i\cdot}{\cdot}_{\Gamma_i} \simeq \norm{\cdot}{\widetilde \HH_t^{-1/2}(\Gamma_i)}^2$ and 
  $\dual{\slp_i \curl_{\Gamma_i} u_{i,0}}{\curl_{\Gamma_i}v_{i,0}}_{\Gamma_i} = \dual{\hyp_i
    u_{i,0}}{v_{i,0}}_{\Gamma_i}$ for all $u,v\in X_h$
  show
  \begin{align*}
    \blfalt(v,v) &\lesssim \sum_{i=1}^N \dual{\hyp_i v_{i,0}}{v_{i,0}}_{\Gamma_i} + \lvert\log(\hmin)\rvert^2
    \sum_{i=1}^N \norm{v_{i,1}}{H^{1/2}(\Gamma_i)}^2.
  \end{align*}
  Let $\widetilde\Gamma_i$ denote a closed extension of the subdomain
  $\Gamma_i$ and let $\widetilde\TT_i$ denote an extension of the mesh $\TT_i$ such that the shape-regularities of the
  meshes $\widetilde\TT_i$ and $\TT_i$ are equivalent.
  For $z_j \in \KK_i^0$ we set $\widetilde\eta_j^{(i)} := \eta_j^{(i)}$ and for
  $z_j\in\KK_i^1$ we define $\widetilde\eta_j^{(i)}$ as the (bi-)linear function with
  $\widetilde\eta_j^{(i)}(z_k) = \delta_{jk}$ for all nodes $z_k$ of the mesh $\widetilde\TT_i$.
  Hence, $\widetilde\eta_j^{(i)}|_{\Gamma_i} = \eta_j^{(i)}$.
  For an arbitrary function $v_i = \sum_{z_j\in\KK_i} \xx_j \eta_j^{(i)} \in X_{h,i}$ we define its extension $\widetilde
  v_i$ as 
  \begin{align*}
    \widetilde v_i := \sum_{z_j\in\KK_i} \xx_j \widetilde\eta_j^{(i)} \in \widetilde X_{h,i}.
  \end{align*}
  By the properties of the $H^{1/2}$- and $\widetilde H^{1/2}$-norms, we have
  \begin{align*}
    \norm{v_{i,1}}{H^{1/2}(\Gamma_i)}^2 \lesssim \norm{\widetilde v_{i,1}}{\widetilde H^{1/2}(\widetilde\Gamma_i)}^2.
  \end{align*}
  Set $\omega_k := \supp(\widetilde\eta_k^{(i)})$. We note that there exists a constant $C_\mathrm{col}>0$ that
  depends only on the shape-regularity of the mesh $\TT_i$ such that
  \begin{align*}
    \norm{\widetilde v_{i,1}}{\widetilde H^{1/2}(\widetilde\Gamma_i)}^2 \leq 
    C_\mathrm{col} \sum_{z_k\in\KK_i^1} \norm{\xx_k \widetilde\eta_k^{(i)}}{\widetilde
      H^{1/2}(\omega_k)}^2.
  \end{align*}
  With 
  \begin{align*}
    \norm{\widetilde \eta_k^{(i)}}{\widetilde H^{1/2}(\omega_k)}^2 \simeq \diam(\omega_k) \simeq
    \norm{\widetilde\eta_k^{(i)}|_{\partial\Gamma_i}}{L^2(\omega_k\cap\partial\Gamma_i)}^2
    = \norm{\eta_k^{(i)}|_{\partial\Gamma_i}}{L^2(\omega_k\cap\partial\Gamma_i)}^2
  \end{align*}
  and the locality of the $L^2$-norms we further deduce 
  \begin{align*}
    \norm{\widetilde v_{i,1}}{\widetilde H^{1/2}(\widetilde\Gamma_i)}^2 &\lesssim
    \sum_{z_k\in\KK_i^1} \norm{\xx_k \widetilde\eta_k^{(i)}}{\widetilde H^{1/2}(\omega_k)}^2
    \lesssim \sum_{z_k\in\KK_i^1}
    \norm{\xx_k \eta_k^{(i)}|_{\partial\Gamma_i}}{L^2(\omega_k\cap\partial\Gamma_i)}^2
    \simeq \norm{v_{i,1}|_{\partial\Gamma_i}}{L^2(\partial\Gamma_i)}^2.
  \end{align*}
  Thus, altogether we have
  \begin{align*}
    \blfalt(v,v) &\lesssim \sum_{i=1}^N \dual{\hyp_i v_{i,0}}{v_{i,0}}_{\Gamma_i} + \lvert\log(\hmin)\rvert^2
    \sum_{i=1}^N \norm{v_{i,1}|_{\partial\Gamma_i}}{L^2(\partial\Gamma_i)}^2,
  \end{align*}
  which proves the upper bounds.

  For the lower bounds, we use Lemma~\ref{lem:traceineq} with $R=\Gamma_i$ and
  $\eps= \lvert\log(\hmin)\rvert^{-1}$ (for $\hmin$ small enough).
  Together with an inverse inequality this gives
  \begin{align*}
    \norm{v_{i,1}|_{\partial\Gamma_i}}{L^2(\partial\Gamma_i)}^2 \lesssim \lvert\log(\hmin)\rvert \norm{v_i}{H^{1/2}(\Gamma_i)}^2.
  \end{align*}
  Using the norm equivalence $\norm{\cdot}{\widetilde H^{1/2}(\Gamma_i)}^2\simeq \dual{\hyp_i \cdot}{\cdot}_{\Gamma_i}$
  and Lemma~\ref{lem:traceineq:multilevel} shows that
  \begin{align*}
    \dual{\hyp_i v_{i,0}}{v_{i,0}}_{\Gamma_i} \simeq  \norm{v_{i,0}}{\widetilde H^{1/2}(\Gamma_i)}^2
    \lesssim \lvert\log(\hmin)\rvert^2 \norm{v_i}{H^{1/2}(\Gamma_i)}^2.
  \end{align*}
  Combining the previous relations and summing over $i=1,\dots,N$ proves 
  \begin{align*}
    d_1(v,v) &= \sum_{i=1}^N \dual{\hyp_i v_{i,0}}{v_{i,0}}_{\Gamma_i} + 
    \sum_{i=1}^N \norm{v_{i,1}|_{\partial\Gamma_i}}{L^2(\partial\Gamma_i)}^2 \leq \lvert\log(\hmin)\rvert^2
    \norm{v}{H^{1/2}(\TT)}^2,\\
    d_2(v,v) &= \sum_{i=1}^N \lvert\log(\hmin)\rvert^{-2} \dual{\hyp_i v_{i,0}}{v_{i,0}}_{\Gamma_i} + 
    \sum_{i=1}^N \norm{v_{i,1}|_{\partial\Gamma_i}}{L^2(\partial\Gamma_i)}^2 \leq \lvert\log(\hmin)\rvert
    \norm{v}{H^{1/2}(\TT)}^2, \\
    d_3(v,v) &= \sum_{i=1}^N \dual{\hyp_i v_{i,0}}{v_{i,0}}_{\Gamma_i} + \lvert\log(\hmin)\rvert
    \sum_{i=1}^N \norm{v_{i,1}|_{\partial\Gamma_i}}{L^2(\partial\Gamma_i)}^2 \leq \lvert\log(\hmin)\rvert^2
    \norm{v}{H^{1/2}(\TT)}^2.
  \end{align*}
  Hence, by applying the ellipticity of $\blfalt(\cdot,\cdot)$ from Theorem~\ref{thm:mortar},
  this shows the lower bounds.
\end{proof}

\begin{proof}[{Proof of Lemma~\ref{lem:svbounds}}]
  Note that the nonzero singular values of $\widetilde\Bmat$ are given by the square roots of the eigenvalues of the
  matrix $\widetilde\Bmat\widetilde\Bmat^T$, since $\widetilde\Bmat$ has full (row) rank.
  Furthermore, we note that the smallest and largest singular values are given, respectively,
  by the minimum and maximum of the term
  \begin{align*}
    \max_{\yy\in\R^K} \frac{b(v,\psi)}{\norm{\yy}{\Pmat_\Amat}\norm{\xx}{\Pmat_\Bmat}} \quad\text{with } v =
    \sum_{k=1}^K \yy_k \eta_k \text{ and } \psi = \sum_{m=1}^M \xx_m \chi_m.
  \end{align*}
  We start with the upper bound.
  By the Cauchy-Schwarz and triangle inequalities we have
  \begin{align*}
    b(v,\psi)^2 \leq 2 \norm{\psi}{L^2(\gamma)}^2 \sum_{i=1}^N \norm{v_i}{L^2(\partial\Gamma_i)}^2 
    \leq \norm{\psi}{L^2(\gamma)}^2 \lvert\log(\hmin)\rvert^{M_j} d_j(v,v)
  \end{align*}
  with $M_1=M_2=0$ and $M_3=-1$.
  This together with $d_j(v,v)\lesssim \evmax \yy^T \Pmat_\Amat \yy$ and
  $\norm{\psi}{L^2(\gamma)}^2 \leq \sigma_\mathrm{max} \xx^T\Pmat_\Bmat\xx$ from~\eqref{eq:specteq:Lagrange} 
  proves the upper bound.

  For the lower bound, we use the proof of Lemma~\ref{lem:wbsplitting} to see that
  \begin{align*}
    \evmin \yy^T \Pmat_\Amat \yy \lesssim d_j(v,v) \lesssim \lvert\log(\hmin)\rvert^{m_j} \norm{v}{H^{1/2}(\TT)}^2
  \end{align*}
  with $m_1 = m_3 = -2$ and $m_2 = -1$.
  This leads to the estimate
  \begin{align*}
    \max_{\yy\in\R^K \backslash \{0\}} \frac{b(v,\psi)}{\norm{\yy}{\Pmat_\Amat}\norm{\xx}{\Pmat_\Bmat}} \gtrsim
    (\evmin)^{1/2} \lvert\log(\hmin)\rvert^{-m_j/2} \max_{v\in X_h \backslash \{0\}} \frac{b(v,\psi)}{\norm{v}{H^{1/2}(\TT)}\norm{\xx}{\Pmat_\Bmat}}.
  \end{align*}
  By using $\norm{\psi}{L^2(\gamma)}^2 \geq \sigma_\mathrm{min} \xx^T\Pmat_\Bmat\xx$
  from~\eqref{eq:specteq:Lagrange} and the discrete \emph{inf-sup}
  condition 
  \begin{align*}
    \sup_{0\neq v\in X_h} \frac{b(v,\psi)}{\norm{v}{H^{1/2}(\TT)}} \geq \beta \norm{\psi}{L^2(\gamma)} \quad\text{for
    all } \psi \in Y_h
  \end{align*}
  from~\cite[Lemma~3.12]{mortarBEM} (with constant $\beta>0$ independent of $h$) we conclude the lower bound.
\end{proof}

\section{Numerical Examples}\label{sec:examples}
In this section we present numerical examples in which we compare the different behaviors of the preconditioned systems
induced by the preconditioning forms $d_1(\cdot,\cdot),d_2(\cdot,\cdot),d_3(\cdot,\cdot)$. 
Note that block $\Pmat_\Amat$ of the preconditioner $\Pmat$ is determined by the preconditioning forms $d_j(\cdot,\cdot)$. 
For the second block $\Pmat_\Bmat$ we choose, up to a possible logarithmic term, the (diagonal) mass matrix $\Mmat$ for the
Lagrangian multiplier space.
We distinguish the following cases (with corresponding numbers $\sigma_\mathrm{min},\sigma_\mathrm{max}$
according to Theorem~\ref{thm:cond}, cf.~\eqref{eq:specteq:Lagrange}).

\[
  \begin{array}{lll}
   \text{\emph{Case 1a)}} & \Pmat_\Bmat = \lvert\log(\hmin)\rvert^{-1} \Mmat,
                   & \sigma_\mathrm{min} = \sigma_\mathrm{max} = \lvert\log(\hmin)\rvert,\\
   \text{\emph{Case 1b)}} & \Pmat_\Bmat = \Mmat,
                   & \sigma_\mathrm{min} = \sigma_\mathrm{max} = 1,\\
   \text{\emph{Case 2)}}  & \Pmat_\Bmat = \lvert\log(\hmin)\rvert^{-1} \Mmat,
                   & \sigma_\mathrm{min} = \sigma_\mathrm{max} = \lvert\log(\hmin)\rvert,\\
   \text{\emph{Case 3)}}  & \Pmat_\Bmat = \Mmat,
                   & \sigma_\mathrm{min} = \sigma_\mathrm{max} = 1.
  \end{array}
\]
Note that \emph{Case 1a}, \emph{Case 2}, \emph{Case 3} correspond to the bounds obtained in Theorem~\ref{thm:cond}, whereas, at least
theoretically, we would expect worse bounds for \emph{Case 1b}.
Moreover, we compare the results to a simple diagonal preconditioner with
\begin{align*}
  (\Pmat_\Amat)_{jk} = \Amat_{jj}\delta_{jk} \text{ and } (\Pmat_\Bmat)_{jk} = \Bmat_{jj}\delta_{jk},
\end{align*}
where $\delta_{jk}$ denotes the Kronecker delta symbol.
In the figures and tables below, we refer to this preconditioner as \emph{diag}.

Throughout, we use the MINRES algorithm, see Section~\ref{sec:minres}, to solve the discrete system. We stop the
algorithm if the relative residual in the $k$-th step satisfies
\begin{align*}
  \frac{\norm{\rr^{(k)}}{\Pmat}}{\norm{\rr^{(0)}}{\Pmat}} \leq 10^{-6}.
\end{align*}

\subsection{Diagonal preconditioner and multilevel diagonal preconditioner}
For the wirebasket component we use a simple diagonal preconditioner. Indeed,
it is straightforward to prove that
\begin{align} \label{prec_wire_scale}
  \norm{v_{i,1}|_{\partial\Gamma_i}}{L^2(\partial\Gamma_i)}^2 \simeq \sum_{z_j\in \KK_i^1} \yy_j^2
  \norm{\eta_j^{(i)}}{L^2(\partial\Gamma_i)}^2
  \simeq \sum_{z_j\in \KK_i^1} \yy_j^2 \, \diam(\omega_j),
\end{align}
where $v_{i,1} = \sum_{z_j\in\KK_i^1} \yy_j \eta_j^{(i)}\in X_{i,1}$ and 
$\diam(\omega_j)$ is the diameter of the node patch $\omega_j = \supp(\eta_j^{(i)})$ of $z_j$.
For the example from Section~\ref{sec:examples:Zshape}, we define the diagonal preconditioner
\begin{align*}
  (\Pmat_{\partial\Gamma_i})_{jk} := \frac{|\omega_j|^{1/2}}{12} \, \delta_{jk}.
\end{align*}
According to \eqref{prec_wire_scale} it is optimal, that is, the numbers from~\eqref{eq:defPrecL2} behave like
\begin{align}\label{eq:equivConst:diag}
  \mu_\mathrm{min}^{(i)} \simeq \mu_\mathrm{max}^{(i)} \simeq 1,\quad i=1,\ldots, N.
\end{align}
For the example from Section~\ref{sec:examples:Quad} we test as preconditioner (for the wirebasket components)
the diagonal of the matrix, i.e., we set
\begin{align*}
  (\Pmat_{\partial\Gamma_i})_{jk} := \dual{\slp_i \curl_H \eta_k^{(i)}}{\curl_H\eta_j^{(i)}}_{\Gamma_i} \delta_{jk}.
\end{align*}
Since $\dual{\slp_i \curl_H \eta_j^{(i)}}{\curl_H\eta_j^{(i)}}_{\Gamma_i}\simeq \diam(\omega_j) \simeq
|\omega_j|^{1/2}$ the constants from~\eqref{eq:defPrecL2} satisfy \eqref{eq:equivConst:diag} in this case as well.

It remains to select preconditioners for the matrix blocks that belong to the interior unknowns, i.e.,
the ones corresponding to the nodes $\KK_i^0$, $i=1,\ldots, N$. As indicated by \eqref{eq:defPrecHyp}, it is enough
to take for each subdomain a standard preconditioner that works for the hypersingular operator.
In the following we use as $\Pmat_{\hyp_i}$ a multilevel diagonal preconditioner, i.e.,
\begin{align*} 
  \Pmat_{\hyp_i}^{-1} := \sum_{\ell_i=0}^{L_i} \TransferMat_{\ell_i} \DiagMat_{\ell_i}^{-1} \TransferMat_{\ell_i}^T.
\end{align*}
More precisely, we consider $\TT_i=\TT_{i,L_i}$ as the finest level of a sequence of meshes
$\TT_{i,\ell}$ ($\ell = 0,\dots,L_i$). Then,
$\DiagMat_{\ell_i}$ is the diagonal part of the Galerkin matrix of
$\dual{\slp \curl_H (\cdot)}{\curl_H(\cdot)}_{\Gamma_i}$ with respect to the nodal basis of $X_{i,0}$ on level $\ell_i$ and
$\TransferMat_{\ell_i}$ is the matrix representation of the embedding operator which embeds elements of the space
$X_{i,0}$ on a coarse level $\ell_i$ to functions on the fine level $L_i$.
For the examples from Section~\ref{sec:examples:Zshape}, we replace the entries of $(\DiagMat_{\ell_i})_{jj}$ by
$|\omega_j^{\ell_i}|^{1/2}/12$. Here, $\omega_j^{\ell_i}$ is the support of the basis functions of level $\ell_i$
associated with node $j$.

It is known, see, e.g.,~\cite{amcl03}, that these preconditioners are optimal on triangular meshes, i.e., the constants
from~\eqref{eq:defPrecHyp} satisfy
\begin{align}\label{eq:equivConst:mld}
  \evmin^{(i)} \simeq \evmax^{(i)} \simeq 1
\end{align}
with mesh size independent constants.
Such multilevel preconditioners can be extended to locally refined meshes with assumptions on refinement zones, see,
e.g.,~\cite{amcl03}, or by use of special refinement strategies like Newest Vertex Bisection, cf.~\cite{ffps}.
The basic idea is that smoothing with the diagonal elements is done with respect to the degrees of freedom, where
the associated basis functions have changed.

We remark that the cited results for the multilevel diagonal preconditioners are stated for triangular meshes only.
However, for uniform refinements, the same techniques can be used to prove optimality on quadrilateral
meshes.
Finally, note that~\eqref{eq:equivConst:diag} and~\eqref{eq:equivConst:mld} imply that the numbers
$\evmin,\evmax$ from Theorem~\ref{thm:main} satisfy
\begin{align*}
  \evmin \simeq \evmax \simeq 1.
\end{align*}
According to Theorem~\ref{thm:cond} we then expect bounds
$\kappa(\widetilde\sysmat)=O(|\!\log\underline{h}|^\beta)$ with $\beta\le 5$ in \emph{Case 1a} and
$\beta\le 4$ in \emph{Cases 2,3}.
A theoretical bound for the condition number in \emph{Case 1b} would results in an exponent $\beta>5$
(and is not given here). But our numerical results show that this preconditioner is as competitive as in
\emph{Case 1a}, and better than in \emph{Cases 2} and \emph{3}. Our explanation is that some of the
technical bounds used in proofs are not sharp, see the discussion in the introduction.

\subsection{Problem on Z-shaped domain with triangular meshes}\label{sec:examples:Zshape}
We consider the variational formulation~\eqref{eq:mortarbem:alt} with $f=1$ and stabilization parameter $\alpha = 0.1$
on the Z-shaped domain $\Gamma$ from Figure~\ref{fig:Zshape}.
In this case we consider only one subdomain, i.e., $\TT=\{\Gamma\}$, $N=1$.
The stabilization parameter is chosen such that the lower order stabilization terms in the
definition~\eqref{eq:def:blfalt} are not the dominating parts in the condition numbers (for large $\hmax$).

\begin{figure}[htb]
  \begin{center}
    \psfrag{x}[c][c]{\tiny $x$}
    \psfrag{y}[c][c]{\tiny $y$}
    \includegraphics[width=0.6\textwidth]{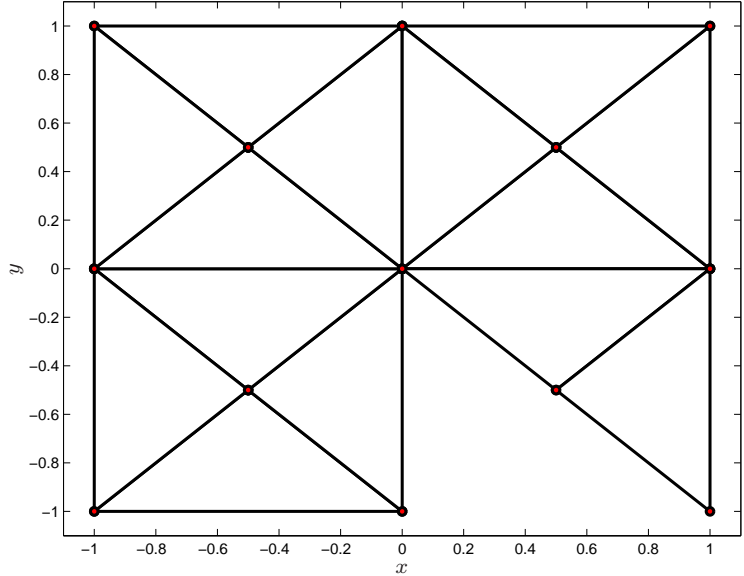}
    \caption{Initial triangulation of Z-shaped domain for the example from Section~\ref{sec:examples:Zshape}.}
    \label{fig:Zshape}
  \end{center}
\end{figure}

\begin{figure}[htb]
  \begin{center}
    \psfrag{dof}[c][c]{\tiny degrees of freedom}
    \psfrag{condition number}[c][c]{\tiny condition numbers}
    \psfrag{iterations}[c][c]{\tiny number of iterations}
    \psfrag{noPrec}{\tiny no prec.}
    \psfrag{diag}{\tiny diag.}
    \psfrag{case1a}{\tiny Case 1a}
    \psfrag{case1b}{\tiny Case 1b}
    \psfrag{case2}{\tiny Case 2}
    \psfrag{case3}{\tiny Case 3}
    \includegraphics[width=0.49\textwidth]{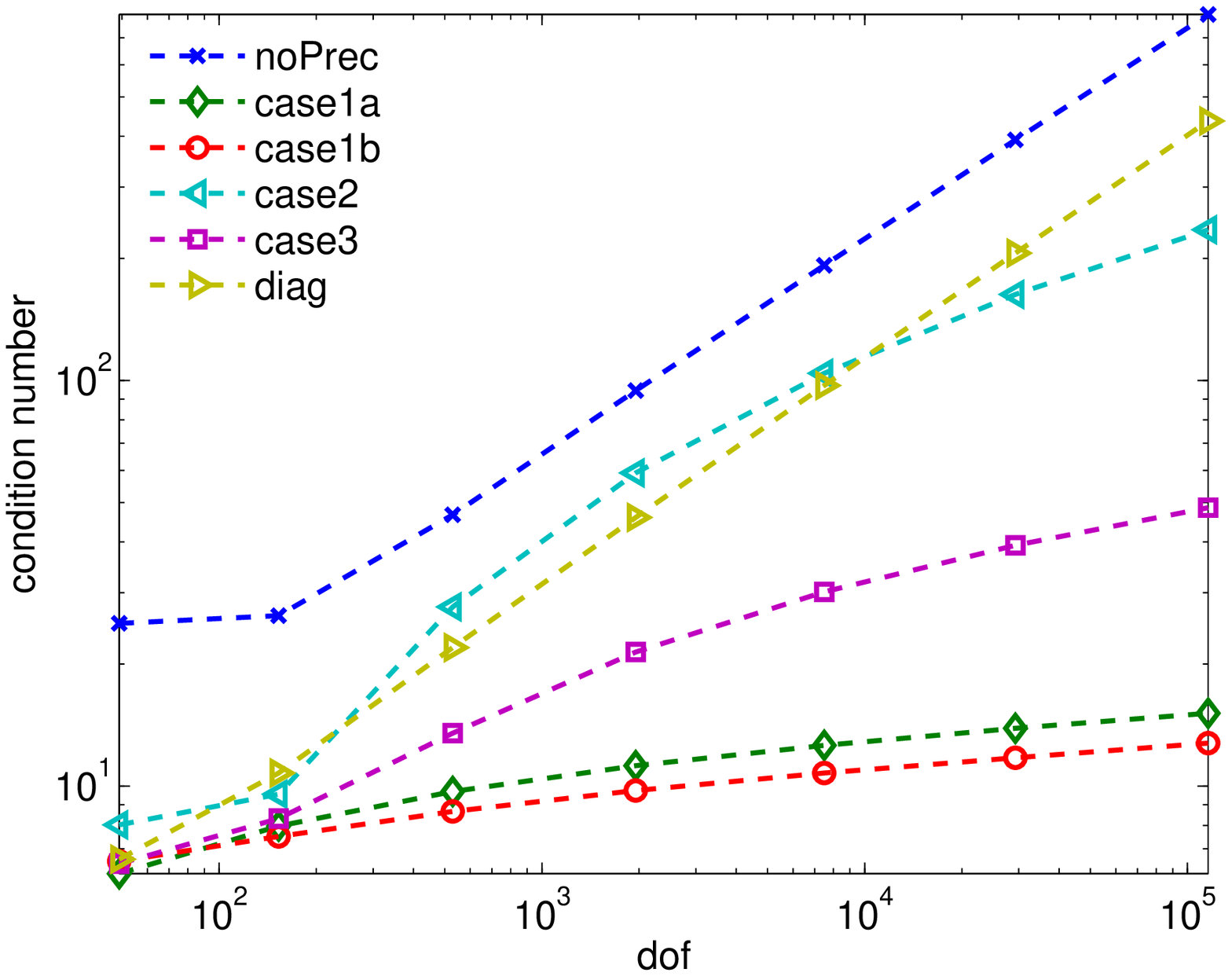}
    \includegraphics[width=0.49\textwidth]{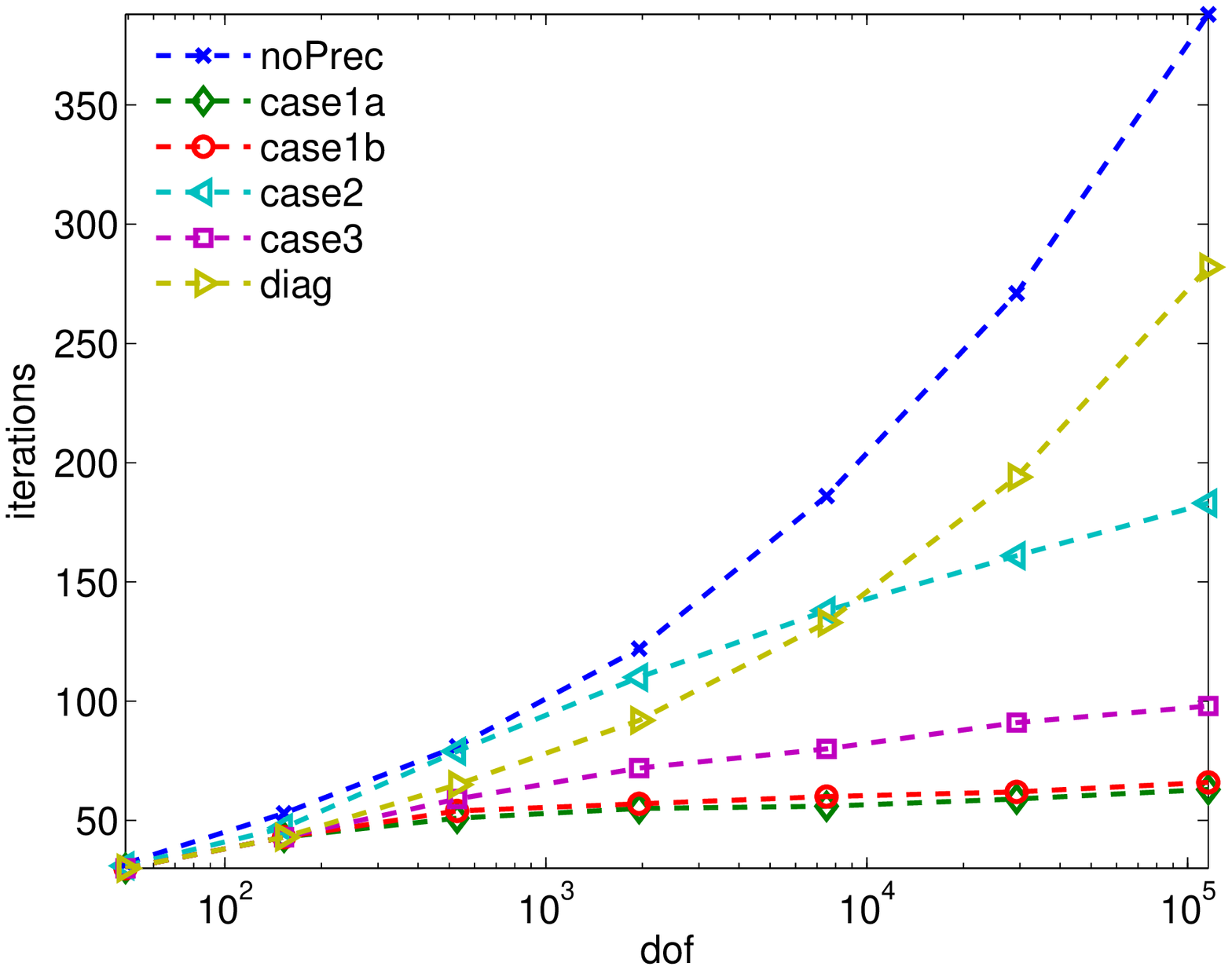}
    \caption{Condition numbers of the preconditioned systems and number of iterations in the MINRES algorithm for the
      example of Section~\ref{sec:examples:Zshape} and uniform refinements.}
    \label{fig:Zshape:unif}
  \end{center}
\end{figure}

\begin{table}
  \begin{center}
\begin{tabular}{|c|c|c|c|c|c|c|c|c|c|}
\hline
\textbf{step}&\textbf{dof}&\textbf{$\hmax$}&\textbf{$\hmin$}&\textbf{no prec.}&\textbf{1a}&\textbf{1b}&\textbf{2}&\textbf{3}&\textbf{diag.}\\
\hline\hline
1 & 49 & 1/2 & 1/2 & 25.19 & 6.09 & 6.52 & 8.03 & 6.44 & 6.60\\\hline
2 & 153 & 1/4 & 1/4 & 26.33 & 7.97 & 7.51 & 9.56 & 8.31 & 10.77\\\hline
3 & 529 & 1/8 & 1/8 & 46.68 & 9.70 & 8.66 & 27.66 & 13.50 & 21.95\\\hline
4 & 1953 & 1/16 & 1/16 & 94.46 & 11.22 & 9.76 & 59.18 & 21.41 & 45.98\\\hline
5 & 7489 & 1/32 & 1/32 & 192.36 & 12.60 & 10.77 & 104.18 & 30.11 & 97.20\\\hline
6 & 29313 & 1/64 & 1/64 & 392.15 & 13.88 & 11.75 & 162.97 & 39.24 & 206.00\\\hline
7 & 115969 & 1/128 & 1/128 & 799.48 & 15.12 & 12.77 & 235.40 & 48.56 & 436.41\\\hline
\end{tabular}

    \vspace{1ex}
  \end{center}
  \caption{Condition numbers of the preconditioned systems for the
      example of Section~\ref{sec:examples:Zshape} and uniform refinements.}
  \label{tab:Zshape:unif}
\end{table}

\begin{figure}[htb]
  \begin{center}
    \psfrag{dof}[c][c]{\tiny degrees of freedom}
    \psfrag{condition number}[c][c]{\tiny condition numbers}
    \psfrag{case2}{\tiny Case 2}
    \psfrag{case3}{\tiny Case 3}
    \psfrag{log3}{\tiny $|\!\log(\underline{h})|^3$}
    \psfrag{log2}{\tiny $|\!\log(\underline{h})|^2$}
    \includegraphics[width=0.7\textwidth,height=0.5\textwidth]{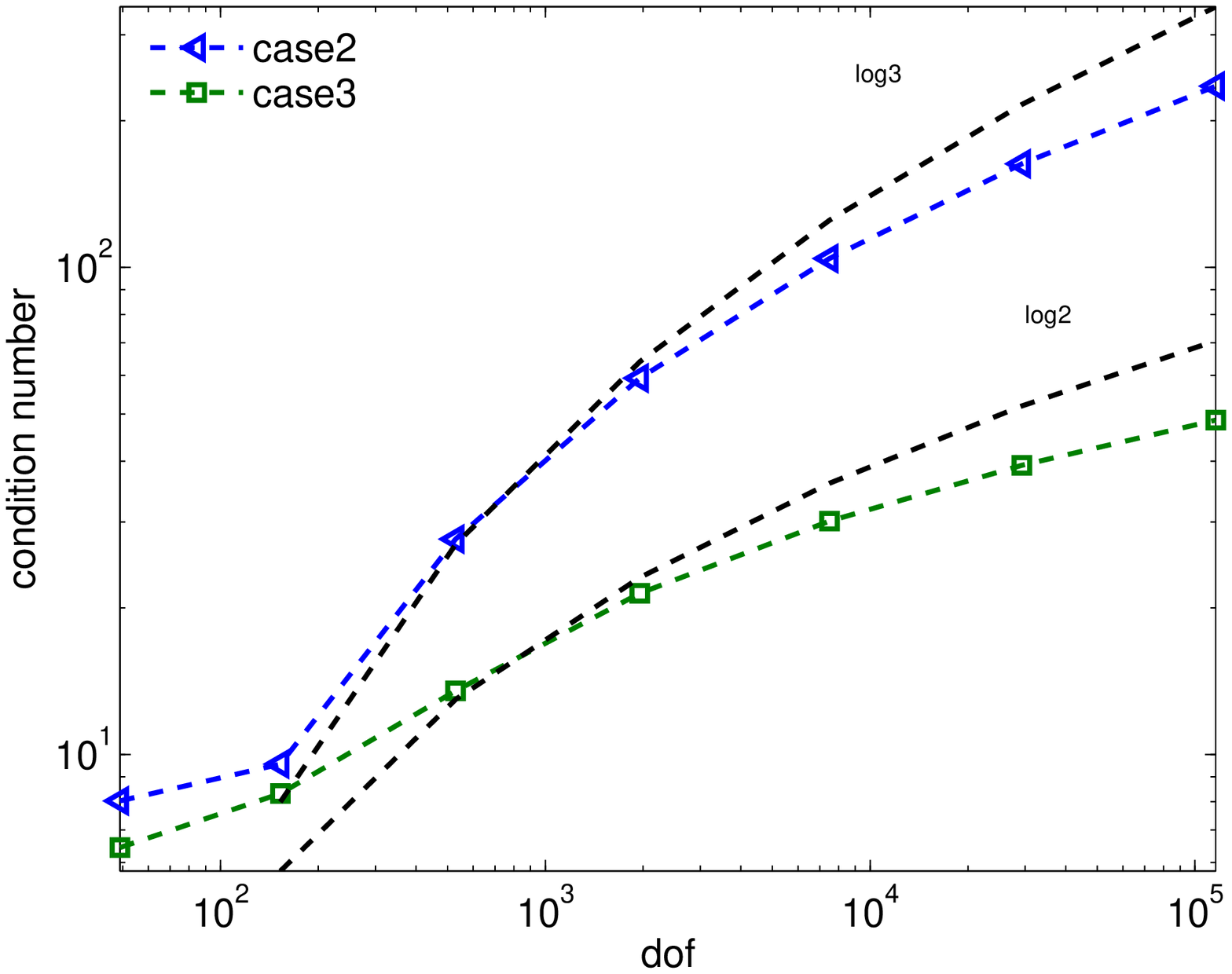}
    \caption{Logarithmic behavior of condition numbers for
      example of Section~\ref{sec:examples:Zshape} and uniform refinements.}
    \label{fig:Zshape:unif_log}
  \end{center}
\end{figure}

For the definition of the Lagrangian multiplier space $Y_h$, we combine two adjacent boundary edges to one element of
the mesh $\tau_\ell$.
Mesh refinement is driven by Newest Vertex Bisection, see, e.g.,~\cite{kpp}. In particular, we note
that each triangle $T$ is divided into 4 son elements $T_1,\dots,T_4$, with $|T_j|=|T|/4$. Moreover, this refinement
rule preserves shape-regularity, i.e.
\begin{align*}
  \sup_{T\in\TT_\ell} \frac{\diam(T)^2}{|T|} \lesssim  \sup_{T\in\TT_0} \frac{\diam(T)^2}{|T|},
\end{align*}
which also holds for adaptive mesh refinements.
For details we refer the interested reader to~\cite{kpp} and references therein.
We remark that the initial triangulation does not satisfy Assumption~\ref{ass:interfacemesh} since, for instance, the
boundary edge $(-1,0)\times\{0\}\times\{0\}$ of $\TT_0$ contains only one boundary element. We use a uniform refinement
in the first step, which ensures that Assumption~\ref{ass:interfacemesh} holds true.

Figure~\ref{fig:Zshape:unif} shows the condition numbers (left) as well as the numbers of iterations
(right) needed to reduce the relative residual in the MINRES method by $10^{-6}$ in the case of uniform refinements.
Additionally, the condition numbers are listed in Table~\ref{tab:Zshape:unif}.

In the following let us refer to $P_j$ as the preconditioner of ``\emph{Case $j$}'' ($j\in\{1a,1b,2,3\}$).
The numerical results indicate that the preconditioners $P_{1a}$ and $P_{1b}$ are better than the others,
and that $P_3$ is better than $P_2$. In contrast, Theorem~\ref{thm:cond}
predicts better bounds for $P_2$ and $P_3$. Nevertheless, all the results confirm the theoretical estimates.
Indeed, Figure~\ref{fig:Zshape:unif_log} indicates that $\kappa(\widetilde\sysmat)$ is bounded by
$O(|\!\log(\underline{h})|^3)$ even in \emph{Case 2}.

\begin{figure}[htb]
  \begin{center}
    \psfrag{dof}[c][c]{\tiny degrees of freedom}
    \psfrag{condition number}[c][c]{\tiny condition numbers}
    \psfrag{iterations}[c][c]{\tiny number of iterations}
    \psfrag{noPrec}{\tiny no prec.}
    \psfrag{diag}{\tiny diag.}
    \psfrag{case1a}{\tiny Case 1a}
    \psfrag{case1b}{\tiny Case 1b}
    \psfrag{case2}{\tiny Case 2}
    \psfrag{case3}{\tiny Case 3}
    \includegraphics[width=0.49\textwidth]{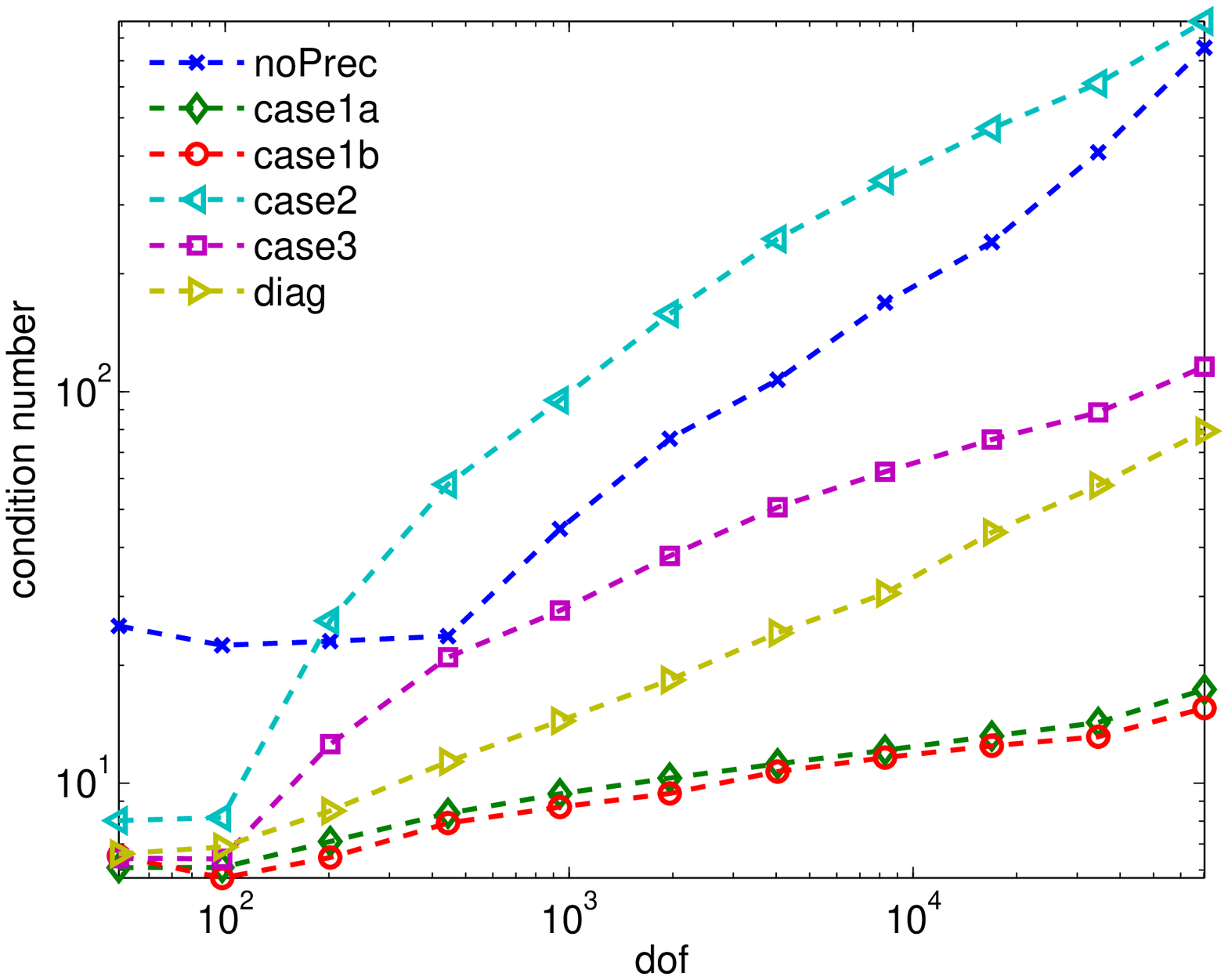}
    \includegraphics[width=0.49\textwidth]{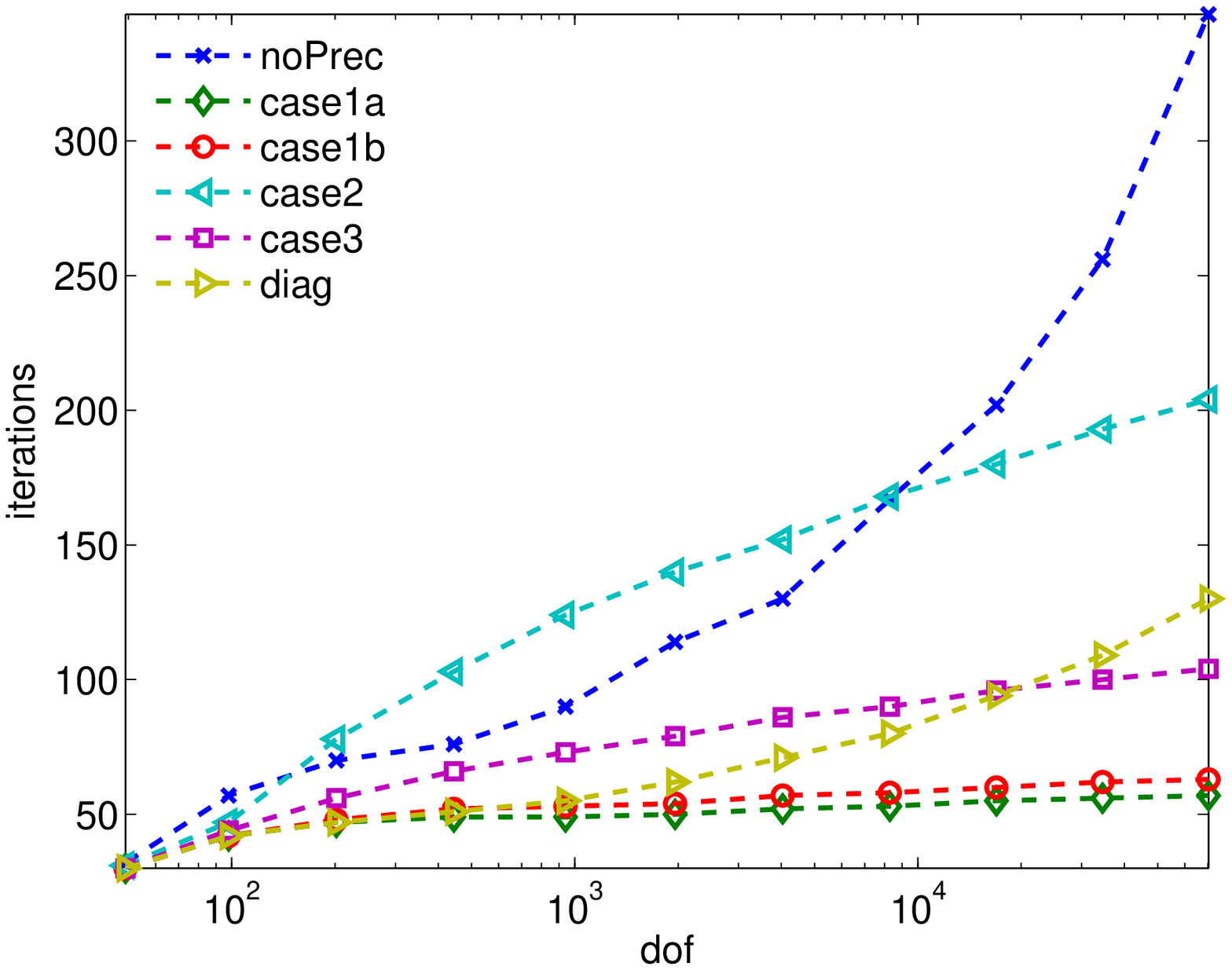}
    \caption{Condition numbers of the preconditioned systems and number of iterations in the MINRES algorithm for the
      example of Section~\ref{sec:examples:Zshape} and adaptive refinements.}
    \label{fig:Zshape:adap}
  \end{center}
\end{figure}

\begin{table}
  \begin{center}
\begin{tabular}{|c|c|c|c|c|c|c|c|c|c|}
\hline\textbf{step}&\textbf{dof}&\textbf{$\hmax$}&\textbf{$\hmin$}&\textbf{no prec.}&\textbf{1a}&\textbf{1b}&\textbf{2}&\textbf{3}&\textbf{diag.}\\
\hline\hline
1 & 49 & 5.00e-01 & 1/2 & 25.19 & 6.09 & 6.52 & 8.03 & 6.44 & 6.60\\\hline
2 & 98 & 5.00e-01 & 1/4 & 22.51 & 6.09 & 5.73 & 8.17 & 6.40 & 6.88\\\hline
3 & 202 & 5.00e-01 & 1/8 & 23.05 & 7.09 & 6.46 & 26.00 & 12.58 & 8.49\\\hline
4 & 444 & 5.00e-01 & 1/16 & 23.74 & 8.37 & 7.91 & 57.98 & 20.98 & 11.35\\\hline
5 & 939 & 5.00e-01 & 1/32 & 44.60 & 9.40 & 8.69 & 95.06 & 27.59 & 14.42\\\hline
6 & 1961 & 3.54e-01 & 1/64 & 75.78 & 10.31 & 9.42 & 158.09 & 38.06 & 18.36\\\hline
7 & 4038 & 2.50e-01 & 1/128 & 107.28 & 11.21 & 10.71 & 245.60 & 50.67 & 24.21\\\hline
8 & 8289 & 1.77e-01 & 1/256 & 168.65 & 12.13 & 11.62 & 345.67 & 62.42 & 30.52\\\hline
9 & 16939 & 1.25e-01 & 1/512 & 240.84 & 13.20 & 12.46 & 470.05 & 75.43 & 43.72\\\hline
10 & 34516 & 1.25e-01 & 1/1024 & 408.58 & 14.28 & 13.15 & 612.52 & 88.46 & 57.62\\\hline
11 & 70278 & 8.84e-02 & 1/2048 & 755.93 & 17.34 & 15.54 & 882.78 & 115.87 & 79.32\\\hline
\end{tabular}

    \vspace{1ex}
  \end{center}
  \caption{Condition numbers of the preconditioned systems for the
      example of Section~\ref{sec:examples:Zshape} and adaptive refinements.}
  \label{tab:Zshape:adap}
\end{table}

\begin{figure}[htb]
  \begin{center}
    \psfrag{dof}[c][c]{\tiny degrees of freedom}
    \psfrag{condition number}[c][c]{\tiny condition numbers}
    \psfrag{case2}{\tiny Case 2}
    \psfrag{case3}{\tiny Case 3}
    \psfrag{log3}{\tiny $|\!\log(\underline{h})|^3$}
    \psfrag{log2}{\tiny $|\!\log(\underline{h})|^2$}
    \includegraphics[width=0.7\textwidth,height=0.5\textwidth]{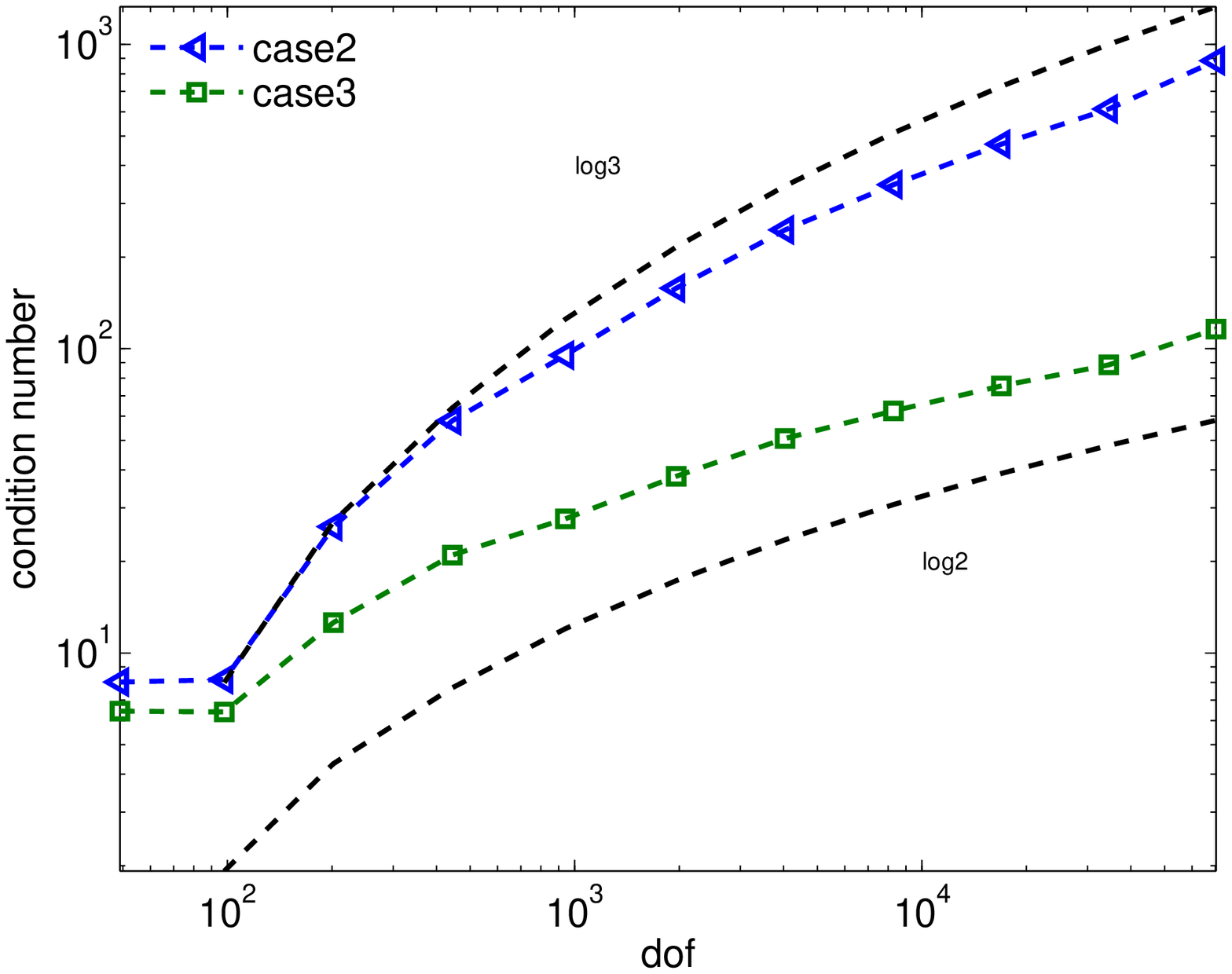}
    \caption{Logarithmic behavior of condition numbers for
      example of Section~\ref{sec:examples:Zshape} and adaptive refinements.}
    \label{fig:Zshape:Adap_log}
  \end{center}
\end{figure}

In the next example we consider adaptive mesh refinements, where we use a simple ZZ-type estimator,
see, e.g.,~\cite{arcme}, to mark elements for refinement and additionally refine all elements that share a boundary edge.
We note that this estimator is not analyzed in~\cite{arcme} for the present (non-conforming) situation,
but is heuristically used to obtain adaptively refined meshes.
Condition numbers of the preconditioned systems and numbers of iterations needed in the MINRES algorithm
are plotted in Figure~\ref{fig:Zshape:adap}. Moreover, the condition numbers are listed in Table~\ref{tab:Zshape:adap}.
We observe similar results as in the case of uniform refinements.
In particular, our theoretical results are confirmed also for adaptively refined meshes.
Again, the results for the weakest of the domain decomposition preconditioners, $P_2$, indicate that
$\kappa(\widetilde\sysmat)\lesssim O(|\!\log(\underline{h})|^3)$ also in this case,
cf.~Figure~\ref{fig:Zshape:Adap_log}.

\subsection{Problem with four subdomains and quadrilateral meshes}\label{sec:examples:Quad}
We consider the variational formulation~\eqref{eq:mortarbem:alt} with $f=1$ and stabilization parameter $\alpha = 0.1$
on the quadratic domain $\Gamma := (0,2)^2 \times \{0\}$ with a decomposition into four subdomains $\Gamma_1 = (0,1)^2
\times \{0\}$, $\Gamma_2 = (1,2)\times(0,1)\times \{0\}$, $\Gamma_3 = (0,1)\times(1,2)\times \{0\}$, and $\Gamma_4 =
(1,2)^2\times \{0\}$ sketched in Figure~\ref{fig:Quad}.
For the intersections $\overline\Gamma_1\cap\overline\Gamma_2$ and $\overline\Gamma_1\cap\overline\Gamma_3$, 
we define $\Gamma_1$ to be the Lagrangian side and
for the intersections $\overline\Gamma_4\cap\overline\Gamma_2$ and $\overline\Gamma_4\cap\overline\Gamma_3$, 
we define $\Gamma_4$ to be the Lagrangian side.
We define the Lagrangian elements that come from $\Gamma_1$ and $\Gamma_4$ as the union of two adjacent edges that lie
in $\partial\Gamma_i$.
For the Lagrangian elements that come from $\Gamma_2$ and $\Gamma_3$ we take the union of three adjacent boundary edges.

\begin{figure}[htb]
  \begin{center}
    \psfrag{x}[c][c]{\tiny $x$}
    \psfrag{y}[c][c]{\tiny $y$}
    \psfrag{G1}[c][c]{$\Gamma_1$}
    \psfrag{G2}[c][c]{$\Gamma_2$}
    \psfrag{G3}[c][c]{$\Gamma_3$}
    \psfrag{G4}[c][c]{$\Gamma_4$}
    \includegraphics[width=0.6\textwidth]{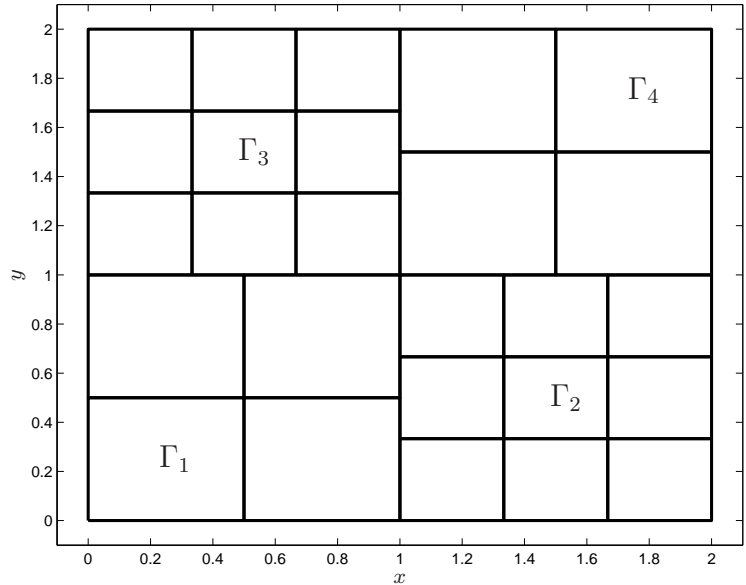}
    \caption{Subspace decomposition of $\Gamma = (0,2)^2 \times\{0\}$ and their initial meshes for the example from
    Section~\ref{sec:examples:Quad}.}
    \label{fig:Quad}
  \end{center}
\end{figure}

\begin{figure}[htb]
  \begin{center}
    \psfrag{dof}[c][c]{\tiny degrees of freedom}
    \psfrag{condition number}[c][c]{\tiny condition numbers}
    \psfrag{iterations}[c][c]{\tiny number of iterations}
    \psfrag{noPrec}{\tiny no prec.}
    \psfrag{diag}{\tiny diag.}
    \psfrag{case1a}{\tiny Case 1a}
    \psfrag{case1b}{\tiny Case 1b}
    \psfrag{case2}{\tiny Case 2}
    \psfrag{case3}{\tiny Case 3}
    \includegraphics[width=0.49\textwidth]{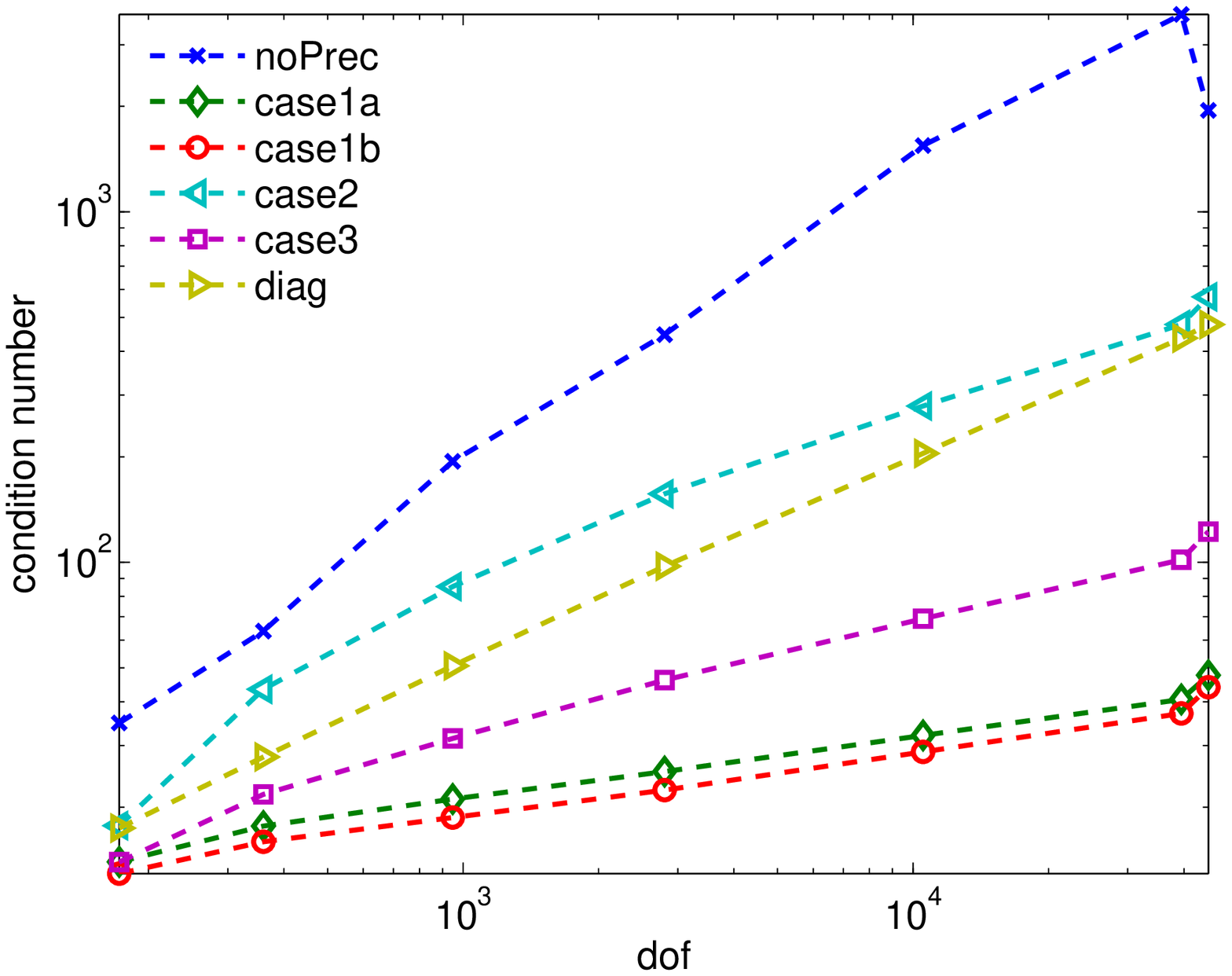}
    \includegraphics[width=0.49\textwidth]{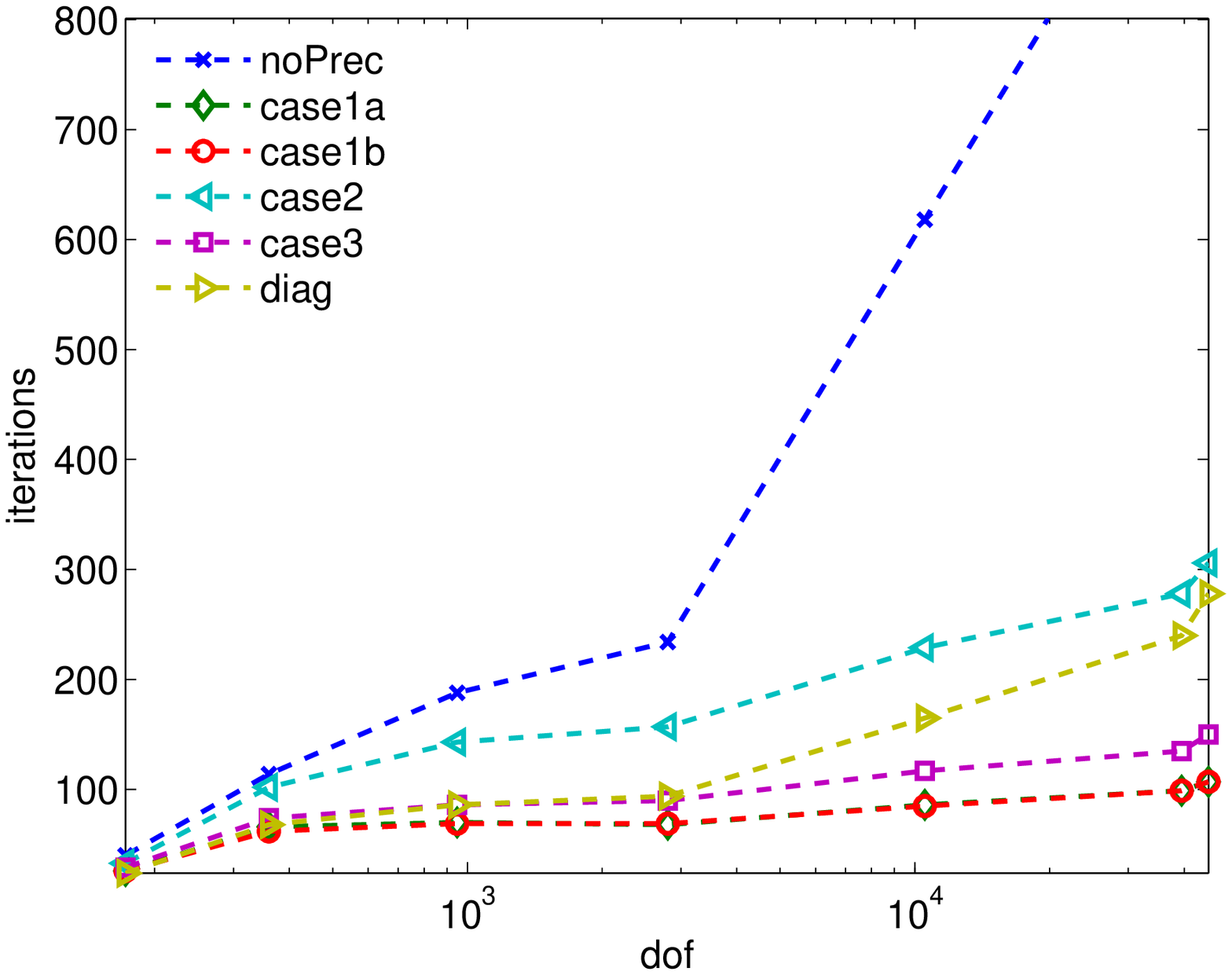}
    \caption{Condition numbers of the preconditioned systems and number of iterations in the MINRES algorithm for the
      example of Section~\ref{sec:examples:Quad} with different refinement levels of subdomain meshes.}
    \label{fig:Quad:unif}
  \end{center}
\end{figure}

\begin{table}
  \begin{center}
\begin{tabular}{|c|c|c|c|c|c|c|c|c|c|c|c|}
\hline\textbf{step}&\textbf{dof}&\textbf{$\hmin_{1}$}&\textbf{$\hmin_{2}$}&\textbf{$\hmin_{3}$}&\textbf{$\hmin_{4}$}&\textbf{no prec.}&\textbf{1a}&\textbf{1b}&\textbf{2}&\textbf{3}&\textbf{diag.}\\
\hline\hline
1 & 172 & 1/4 & 1/6 & 1/6 & 1/4 & 34.77 & 13.97 & 12.94 & 17.72 & 13.96 & 17.43\\\hline
2 & 360 & 1/4 & 1/12 & 1/6 & 1/8 & 63.62 & 17.67 & 15.95 & 43.41 & 21.77 & 27.83\\\hline
3 & 948 & 1/4 & 1/24 & 1/12 & 1/8 & 194.24 & 21.10 & 18.70 & 85.29 & 31.43 & 50.64\\\hline
4 & 2804 & 1/8 & 1/48 & 1/12 & 1/8 & 446.57 & 25.27 & 22.39 & 156.85 & 46.10 & 97.47\\\hline
5 & 10532 & 1/8 & 1/96 & 1/24 & 1/16 & 1545.48 & 32.08 & 28.76 & 279.10 & 69.07 & 204.58\\\hline
6 & 39492 & 1/16 & 1/192 & 1/24 & 1/32 & 3661.95 & 40.65 & 37.04 & 476.84 & 101.74 & 436.13\\\hline
7 & 45316 & 1/32 & 1/192 & 1/48 & 1/64 & 1948.27 & 47.65 & 43.99 & 572.20 & 122.29 & 477.40\\\hline
\end{tabular}

    \vspace{1ex}
  \end{center}
  \caption{Condition numbers of the preconditioned systems for the
    example of Section~\ref{sec:examples:Quad} with different refinement levels of subdomain meshes.}
  \label{tab:Quad:unif}
\end{table}

\begin{figure}[htb]
  \begin{center}
    \psfrag{dof}[c][c]{\tiny degrees of freedom}
    \psfrag{condition number}[c][c]{\tiny condition numbers}
    \psfrag{case2}{\tiny Case 2}
    \psfrag{case3}{\tiny Case 3}
    \psfrag{log3}{\tiny $|\!\log(\underline{h})|^3$}
    \psfrag{log2}{\tiny $|\!\log(\underline{h})|^2$}
    \includegraphics[width=0.7\textwidth,height=0.5\textwidth]{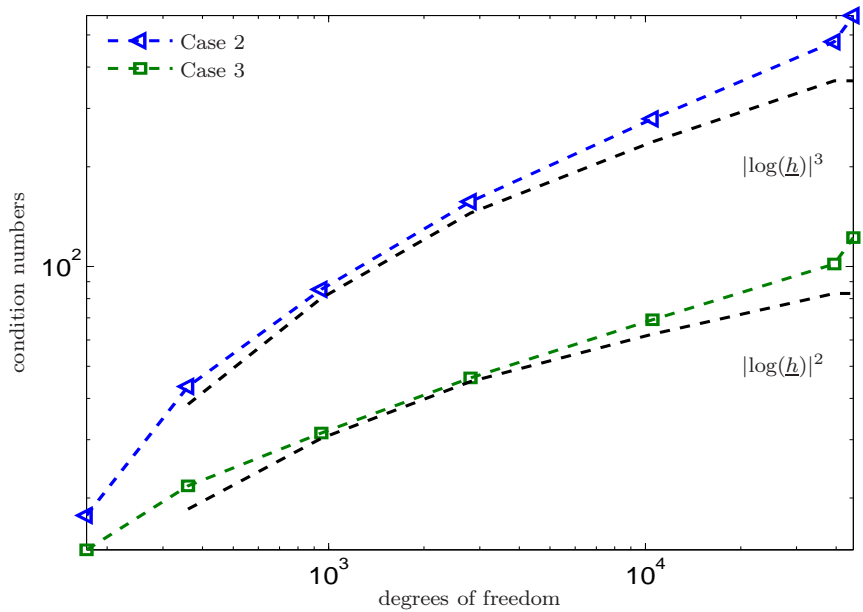}
    \caption{Logarithmic behavior of condition numbers for
      example of Section~\ref{sec:examples:Quad} with different refinement levels of subdomain meshes.}
    \label{fig:Zshape:Quad_log}
  \end{center}
\end{figure}

We consider uniform refinements where each element of $\TT_j$ is divided into four elements.
For the experiment we refine each of the subdomain meshes separately, which leads to different mesh sizes
$\hmin_1$, $\hmin_2$, $\hmin_3$, $\hmin_4$. Note that for our problem configuration there holds $\hmax_j=\hmin_j$.
The results are given in Figure~\ref{fig:Quad:unif} and Table~\ref{tab:Quad:unif}.
As in Section~\ref{sec:examples:Zshape} we observe that the preconditioners $P_{1a}$, $P_{1b}$ corresponding
to the preconditioning form $d_1(\cdot,\cdot)$ behave best in terms of condition numbers and numbers of iterations.
The preconditioners $P_2$ and $P_3$ stemming, respectively, from $d_2(\cdot,\cdot)$ and $d_3(\cdot,\cdot)$
show a stronger dependence on the mesh size. Nevertheless, theoretical bounds are confirmed also
for this example. In particular, Figure~\ref{fig:Zshape:Quad_log} suggests that
$\kappa(\widetilde\sysmat)\lesssim O(|\!\log(\underline{h})|^3)$ for all domain decomposition preconditioners.

Let us also remark that the condition number of the un-preconditioned system gets smaller from
Step~6 to Step~7, see Table~\ref{tab:Quad:unif}.
The condition number $\kappa(\sysmat)$ is bounded (up to logarithmic terms) by $\evmax^{1/2}/\evmin$ and this term
depends on the ratio $\hmax^{1/2}/\hmin$. Since $\hmax$ gets smaller and $\hmin$ stays constant from Step~6 to Step~7 (as we
refine all subdomains except $\Gamma_2$), this explains the observation.

\bibliographystyle{plain}
\bibliography{literature}

\end{document}